\documentclass[a4paper,12pt]{amsart}

\def\d{\partial}
\def\CP1{\mathbb{C}\mathrm{P}^1}
\def\C{\mathbb{C}}
\def\Z{\mathbb{Z}}
\def\Q{\mathbb{Q}}
\def\E{\mathcal{E}}
\def\tE{\tilde{\mathcal{E}}}
\def\oM{\overline{\mathcal{M}}}

\newcommand{\mZeta}[6]{\zeta\left( \begin{smallmatrix}
 #1 & #2 & #3 \\
 #4 & #5 & #6
 \end{smallmatrix} 
 \right)}

\newtheorem{theorem}{Theorem}[section]
\newtheorem{proposition}[theorem]{Proposition}
\newtheorem{corollary}[theorem]{Corollary}
\newtheorem{lemma}[theorem]{Lemma}

\theoremstyle{definition}
\newtheorem{definition}[theorem]{Definition}
\newtheorem{notation}[theorem]{Notation}

\theoremstyle{remark}
\newtheorem{remark}[theorem]{Remark}

\title{On double Hurwitz numbers with completed cycles}

\author{S.~Shadrin}

\author{L.~Spitz}

\address{S.~S. and L.~S.: Korteweg-de~Vries Institute for Mathematics, University of Amsterdam, P.~O.~Box 94248, 1090 GE Amsterdam, The Netherlands}

\email{s.shadrin@uva.nl, l.spitz@uva.nl}

\author{D.~Zvonkine}

\address{D.~Z.: Department of Mathematics, Stanford University, Stanford CA 94305, USA}

\email{dimitri.zvonkine@gmail.com}

\begin{document}

\begin{abstract}
In this paper, we collect a number of facts about double Hurwitz numbers, where the simple branch points are replaced by their more general analogues --- completed $(r+1)$-cycles. In particular, we give a geometric interpretation of these generalised Hurwitz numbers and derive a cut-and-join operator for completed $(r+1)$-cycles. We also prove a strong piecewise polynomiality property in the sense of Goulden-Jackson-Vakil. In addition, we propose a conjectural ELSV/GJV-type formula, that is, an expression in terms of some intrinsic combinatorial constants that might be related to the intersection theory of some analogues of the moduli space of curves. The structure of these conjectural ``intersection numbers'' is discussed in detail. \end{abstract}

\maketitle

\tableofcontents

\section{Introduction}


The theory of usual double Hurwitz numbers has been considered from several different points of view, usually in a purely combinatorial way. First, Okounkov~\cite{Oko00} observed a relation to integrable systems of KP-type that is also nicely explained and heavily used in the recent paper of Johnson~\cite{Joh10}. Second, double Hurwitz numbers were studied from different points of view in purely combinatorial way in a foundational paper of Goulden, Jackson, and Vakil~\cite{GouJacVak05}, where most of the typical contemporary questions were first posed. Third, there is a tropical approach to double Hurwitz numbers developed by Cavalieri, Johnson, and Markwig~\cite{CavJohMar10a,CavJohMar10b}. Fourth, double Hurwitz numbers were studied in genus $0$ through the intersection theory on double ramification cycles in the moduli space of curves by Shapiro, Vainshtein, and the first named author, see~\cite{Sha05,ShaShaVai08}.

The theory of Hurwitz numbers with completed cycles has emerged through several relations with geometry of the moduli space of curves. First, it has emerged in~\cite{OkoPan06} as a way to encode higher degrees of $\psi$-classes in the Gromov-Witten theory of $\CP1$. Second, it is related to the intersection theory of the moduli spaces of $r$-spin structures via an analogue of the ELSV formula~\cite{Zvo06}, conjectured by the third named author, that involves the classes discussed in~\cite{Chi08}. 

This motivates us to look attentively into the combinatorial structure of double Hurwitz numbers with completed cycles, since we expect that some kind of intrinsic combinatorial niceness of these numbers might be an explanation of the existing connections of completed cycles to geometry of moduli spaces and predict some further relations. So, we investigate double Hurwitz numbers with completed cycles in a purely combinatorial way generalizing various results in~\cite{GouJacVak05, Joh10, Sha09, ShaZvo07}. 

\subsection{Organization of the paper}

The paper starts with an introduction of the necessary theory. We define completed cycles as elements of the class algebra of the symmetric group using an isomorphism with the algebra of shifted symmetric functions in section~\ref{sectionCompletedHurwitz}. In section~\ref{sectionInfiniteWedge}, the infinite wedge space is explained and the necessary operators are introduced.

The main part of the paper consists of four sections. In section~\ref{sectionAlgorithm}, we introduce an algorithm to calculate Hurwitz numbers in a practical way, which allows us to prove the theorems in section~\ref{sectionStrongPiecewisePolynomiality}.

In section~\ref{sectionCutAndJoin}, we use the infinite wedge formalism together with calculations on the representation theory of the symmetric group to prove so called cut-and-join equations for completed Hurwitz numbers. These equations are direct generalizations of the the cut-and-join equation for ordinary Hurwitz numbers.

In section~\ref{sectionStrongPiecewisePolynomiality}, we use the algorithm defined in section~\ref{sectionAlgorithm} to prove a strong piecewise polynomiality property in the sense of \cite{GouJacVak05} for completed Hurwitz numbers. The proof is analogous to the proof in \cite{Joh10} for strong piecewise polynomiality of ordinary Hurwitz numbers. We also derive the corresponding wall crossing formula's.

Finally, in the last section we give a formula for completed one-part double Hurwitz numbers in terms of the intersection theory of some conjectural moduli spaces. We give an explicit formula for the generating series for these conjectural intersection numbers and we prove that it obeys the Hirota equations. The same was done for ordinary one-part double Hurwitz numbers in for instance ~\cite{Sha09}.

\subsection{Notation} By $[n]$, $n=1,2,\dots$, we denote the set $\{1,\dots,n\}$.

By $l(\mu)$ we denote the length of the partition $\mu=(\mu_1,\dots,\mu_l)$, and by $|\mu|$ we denote the sum $\sum_{i=1}^{l(\mu)} \mu_i$.

Let $U(z)$ be a formal power series in $z$. By $[z^a]U(z)$ we denote the coefficient of $z^a$ in $U$, that is, $U(z)=\sum_{a=1}^\infty z^a\cdot [z^a]U(z)$.

\subsection{Acknowledgements}
S.~S.~and L.~S.~were supported by a Vidi grant of the Netherlands Organization for Scientific Research.




\section{Hurwitz numbers with completed cycles}\label{sectionCompletedHurwitz}

Following~\cite{OkoPan06} and~\cite{Zvo06}, we give a purely combinatorial definition of Hurwitz numbers with completed cycles. Intuitively, completed Hurwitz numbers count covers of $\CP1$ with given ramification over $0$ and~$\infty$, like ordinary Hurwitz numbers, but instead of allowing ramification given by ordinary branch points elsewhere, we allow ramification by completed cycles.

\subsection{Shifted symmetric functions}

Let $\Q[x_1, \ldots, x_n]$ be the algebra of polynomials in $n$ variables over~$\Q$. We define the shifted action of $S_n$ on this algebra by
\begin{equation}
\sigma (f(x_1 - 1, \ldots, x_n - n)) = f(x_{\sigma(1)} - \sigma(1), \ldots, x_{\sigma(n)} - \sigma(n)) 
\end{equation}
for any element $\sigma \in S_n$ and any polynomial $f$ written in terms of the variables~$x_i - i$. 
Denote by $\Q[x_1, \ldots, x_n]^{*S_d}$
the algebra of polynomials which are invariant with respect to this action. It has a natural filtration by degree. 

\begin{definition}
The algebra of shifted symmetric functions is the algebra
$\Lambda^* = \varprojlim \Q[x_1, \ldots, x_n]^{*S_d}$,
where the projective limit is taken in the category of filtered algebras with respect to the homomorphism which sends the last variable~$x_n$ to~$0$.
\end{definition}

In other words, the elements of the algebra $\Lambda^*$ are given as series 
$f = \{f^{(n)}\}$, $f^{(n)} \in \Q[x_1,\ldots, x_n]^{*S(n)}$, 
such that the polynomials $f^{(n)}$ are of uniformely bounded degree and stable under the restriction
$f^{(n+1)}|_{x_{n+1} = 0} = f^{(n)}$. 

\subsection{Basis of the algebra of shifted symmetric functions}

The shifted analogues of the power sums form a basis of the algebra of shifted symmetric functions. 

\begin{definition}\label{powerSum}
For any positive integer~$k$, the corresponding shifted symmetric power sum $p_k$ is defined as
\begin{equation}
p_k(\lambda) = \sum_{i=1}^{\infty}\left((\lambda_i - i + \frac{1}{2})^k - (-i + \frac{1}{2})^k\right) . 
\end{equation}
For any partition $\mu$, define $p_\mu = \prod_{i=1}^{l(\mu)} p_{\mu_i}$.
\end{definition}

The functions $p_\mu$ form a basis for the algebra of shifted symmetric funtions.  Another basis is obtained in the following way.

Irreducible representations of the symmetric group $S_d$ are in one to one correspondence with partitions of~$d$. 
On the other hand, partitions of~$d$ are also in one to one correspondence with conjugacy classes in~$S_d$. For partitions $\mu$ and $\lambda$, let $\dim(\lambda)$ be the dimension of the representation given by~$\lambda$, let $\chi^\lambda_\mu$ be the character of this representation evaluated on the conjugacy class $C_\mu$ given by~$\mu$, and let $|C_\mu|$ be the size of this conjugacy class. 

\begin{definition}
We define for any partition~$\mu$ a function~$f_\mu$ on the set of partitions of $|\mu|$ by
\begin{equation}\label{definitionFmu}
f_{\mu}(\lambda) = |C_\mu|\frac{\chi_{\mu}^{\lambda}}{\dim(\lambda)}. 
\end{equation}
When the partition $\mu$ is of the form $(r+1,1, \dots, 1)$, we denote $f_\mu$ by~$f_r$, $r=0,1,2,\dots$.
\end{definition}

Kerov and Olshanski proved that the functions $f_{\mu}$ are shifted symmetric~\cite{KerOls94}. They also form a basis for the space of shifted symmetric funtions. 

\subsection{Completed cycles}

Let $Z\C S_d$ be the class algebra of the symmetric group $S_d$. We can define a linear isomorphism
$\phi \colon \bigoplus_{d=0}^\infty Z\C S_d \to \Lambda^*$, $\phi\colon C_\mu \mapsto f_\mu$.

\begin{definition}
For any partition~$\mu$, the completed $\mu$-conjugacy class  $\overline{C}_\mu$ in the class algebra of the symmetric group is defined as
$\overline{C}_\mu := \phi^{-1}(p_\mu)/\prod_{i=1}^{l(\mu)} \mu_i!$.
A special role is played by the \emph{completed cycles}
$\overline{(r)} := \overline{C}_{(r)}$, $r = 1,2, \dots$.
\end{definition}

Note that this definition of the completed $(r+1)$-cycle differs by a factor $r!$ from the definition in~\cite{Zvo06}. 

The first few completed cycles are
\begin{align*}
0! \cdot \overline{(1)} &= (1)  \\ \notag
1! \cdot \overline{(2)} &= (2)\\ \notag
2! \cdot \overline{(3)} &= (3) + (1,1) + \frac{1}{12}\cdot(1) \\ \notag
3! \cdot \overline{(4)} &= (4) + 2\cdot(2,1) + \frac{5}{4}\cdot(2)  \\ \notag
4! \cdot \overline{(5)} &= (5) + 3\cdot(3,1) + 4\cdot (2,2) + \frac{11}{2}\cdot(3) \\
&+ 4\cdot (1,1,1) + \frac{3}{2} \cdot (1,1) + \frac{1}{80} \cdot (1) . 
\end{align*}

\subsection{Hurwitz numbers with completed cycles}

We fix two non-empty partitions $\mu$ and $\nu$ such that $|\mu|=|\nu|$ and non-negative integers $r$ and $s$ such that $2g-2+l(\mu)+l(\nu)=rs$.

\begin{definition} We define {\em disconnected double Hurwitz numbers with completed $(r+1)$-cycles} via the character formula:
\begin{equation} \label{eq:char-formula}
h^{(r)}_{g,\mu,\nu}:= \frac{1}{\prod_{i=1}^{l(\mu)} \mu_i \prod_{j=1}^{l(\nu)} \nu_j} \sum_{|\lambda| = d} \chi_\mu^\lambda \left(\frac{p_{r+1}(\lambda)}{(r+1)!}\right)^s  \chi_\nu^\lambda 
\end{equation}
\end{definition}

We also denote the same Hurwitz number by $h^{(r),s}_{\mu,\nu}$, and we often omit the superscript $(r)$ when $r$ is fixed in advance.

\begin{remark}
Since the completed $2$-cycle is equal to the ordinary $2$-cycle, the double Hurwitz numbers $h_{\mu,\nu}^{(1),s}$ are just the ordinary Hurwitz numbers for not necessary connected surfaces. 
\end{remark}

\subsection{Geometric interpretation}

Hurwitz numbers with completed cycles possess a geometric interpretation. 

Suppose we are given a partition $\lambda$ and a nonnegative interger $\gamma$. To this data we can assign a singularity of stable maps to $\CP1$ by the following rule. 
\begin{itemize}
\item
If $2 - 2\gamma - l(\lambda) <0$ then the singularity consists of a contracted curve of genus $\gamma$ (that is, a curve on which the stable map has degree 0) intersecting the remaining components of the source curve at $l = l(\lambda)$ branches on which the stable map has ramification points with indices $\lambda_1, \dots, \lambda_l$. Note that in this case the contracted curve is required to be connected, but not necessarily irreducible.

\item
If $\gamma=0$ and $l(\lambda) = 2$, then the singularity consists of a simple self-intersection of the source curve such that the stable maps presents ramification points with indices $\lambda_1$, $\lambda_2$.

\item
Finally, if $\gamma=0$ and $l(\lambda) = 1$, then the singularity is just a ramification point with index $\lambda_1$.

\end{itemize}

This list covers all possible connected singular loci of a stable map to $\CP1$.
In all three cases it is natural to consider the image of the singular locus under the stable map as a branch point of multiplicity $2\gamma + |\lambda| - l(\lambda)$, see~\cite{FanPan02}. For this reason we will say that the singular locus described by the data $(\lambda, \gamma)$ has multiplicity $2\gamma + |\lambda| - l(\lambda)$.

Now, a completed $(r+1)$-cycle is a linear combination of conjugacy classes $\lambda$. To each conjugacy class it is easy to assign a nonnegative integer $\gamma$ such that $2\gamma + |\lambda| - l(\lambda) = r$. Thus every term of the completed cycle corresponds to a singular locus of multiplicity~$r$. Moreover, the terms of the completed cycle cover all types of singular loci like that. The numerical coefficient of a partition $(\lambda)$ in the completed $(r+1)$-cycle is some kind of intersection  number on the moduli space $\oM_{\gamma,l(\lambda)}$ of contracted components; its nature is still not entirely clear.

We can now give the geometric meaning of Hurwitz numbers with completed cycles. Call a stable map $f:C\to \CP1$ an {\em $r$-covering} if (i)~it has a finite number of preimages of~$0$ and~$\infty$ and (ii)~all its singular loci have multiplicity $r$ except possibly the preimages of $0$ and $\infty$. Call the {\em weight of a singular locus} described by $(\lambda, \gamma)$ the coefficient of $(\lambda)$ in the completed $(r+1)$-cycle, where $r = 2\gamma + |\lambda| - l(\lambda)$. Call the {\em weight of an $r$-covering} the product of weights of its singular loci divided by the number of automorphisms of the covering. 

\begin{proposition}
The Hurwitz number $h^{(r)}_{g,\mu,\nu}$ is equal to the sum of weights of not necessarily connected $r$-coverings $f:C \to \CP1$ with $s$ fixed branch points, where the Euler characteristic of $C$ equals $2-2g$ and the number of branch points equals $s=(2g-2+l(\mu)+l(\nu))/r$.
\end{proposition}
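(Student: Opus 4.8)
The plan is to pass from the geometric (stable-map) count to a count of monodromy representations, and then to evaluate the resulting sum inside the centre of the group algebra $\C S_d$ by representation theory, so that the right-hand side of \eqref{eq:char-formula} reappears verbatim. First I would invoke the classical monodromy correspondence: a not-necessarily-connected, degree-$d$ branched cover of $\CP1$ with branch points $0,p_1,\dots,p_s,\infty$ corresponds to a tuple $(\sigma_0,\tau_1,\dots,\tau_s,\sigma_\infty)$ of permutations in $S_d$ with $\sigma_0\tau_1\cdots\tau_s\sigma_\infty=1$, taken up to simultaneous conjugation, where $\sigma_0$ has cycle type $\mu$ and $\sigma_\infty$ has cycle type $\nu$ (both padded by fixed points to a partition of $d$). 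Counting such covers with the weight $1/|\mathrm{Aut}(f)|$ equals, by the orbit--stabiliser relation for the conjugation action of $S_d$, exactly $1/d!$ times the number of honest (non-conjugated) tuples.

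The heart of the argument is to identify the geometric weighting with the insertion of the central element $\overline{(r+1)}$ at each of the $s$ intermediate branch points. A singular locus of multiplicity $r$ is described by data $(\lambda,\gamma)$ with $2\gamma+|\lambda|-l(\lambda)=r$; the monodromy of the underlying cover around the corresponding point has cycle type $\lambda$, whereas the contracted genus-$\gamma$ component enters only through the numerical weight, namely the coefficient $a_\lambda$ of $(\lambda)$ in $\overline{(r+1)}$. Since $\gamma$ is determined by $\lambda$ through $2\gamma=r-|\lambda|+l(\lambda)$, summing over all singular loci of multiplicity $r$ with these weights amounts to letting each $\tau_i$ run over all permutations with weight $a_{[\tau_i]}$, i.e.\ to replacing each $\tau_i$ by the class-algebra element $\overline{(r+1)}=\sum_\lambda a_\lambda\,C_\lambda$. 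Hence the weighted tuple count is the coefficient of the identity in the central product $C_\mu\cdot\overline{(r+1)}^{\,s}\cdot C_\nu$ (using that $\nu$ and $\nu^{-1}$ have the same cycle type), where $C_\mu,C_\nu$ now denote the relevant class sums in $\C S_d$.

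To evaluate this I would use that each factor is central and therefore acts on the irreducible module $V_\lambda$ by a scalar: $C_\mu$ and $C_\nu$ act by the central characters $f_\mu(\lambda)$ and $f_\nu(\lambda)$, while $\overline{(r+1)}$ acts by $p_{r+1}(\lambda)/(r+1)!$ --- this is precisely the content of the isomorphism $\phi$ and of the definition of $\overline{(r+1)}$. Combining this with the extraction formula, which says that the coefficient of the identity in any $x\in\C S_d$ equals $\frac{1}{d!}\sum_{|\lambda|=d}\dim(\lambda)\,\mathrm{Tr}_{V_\lambda}(x)$, and writing $\mathrm{Tr}_{V_\lambda}=\dim(\lambda)\cdot(\text{scalar})$, the two factors $1/\dim(\lambda)$ produced by the central characters cancel against the factor $\dim(\lambda)$ from the trace and the factor $\dim(\lambda)$ from the extraction formula, leaving
\begin{equation*}
\frac{|C_\mu|\,|C_\nu|}{d!}\sum_{|\lambda|=d}\chi_\mu^\lambda\left(\frac{p_{r+1}(\lambda)}{(r+1)!}\right)^{s}\chi_\nu^\lambda .
\end{equation*}
Dividing by the further $d!$ from the orbit--stabiliser step and writing $|C_\mu|=d!/z_\mu$, $|C_\nu|=d!/z_\nu$ with $z_\mu=\prod_i\mu_i\cdot|\mathrm{Aut}(\mu)|$, this becomes $\frac{1}{z_\mu z_\nu}\sum(\cdots)$; the leftover factors $|\mathrm{Aut}(\mu)|,|\mathrm{Aut}(\nu)|$ are supplied by the standard convention of ordering the parts of $\mu$ and $\nu$ (equivalently, labelling the preimages of $0$ and $\infty$), which converts $1/(z_\mu z_\nu)$ into $1/(\prod_i\mu_i\prod_j\nu_j)$ and reproduces \eqref{eq:char-formula}.

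Finally I would check the numerical constraint through the branch morphism of \cite{FanPan02}, whose degree equals $2d-\chi(C)$: the loci over $0$ and $\infty$ contribute $d-l(\mu)$ and $d-l(\nu)$, while each intermediate locus contributes its multiplicity $2\gamma+|\lambda|-l(\lambda)=r$ (this is exactly why the multiplicity is defined that way, the contracted genus-$\gamma$ component being absorbed into it). Thus $2d-\chi(C)=(d-l(\mu))+(d-l(\nu))+sr$, i.e.\ $\chi(C)=l(\mu)+l(\nu)-sr$, so that $\chi(C)=2-2g$ is equivalent to the standing assumption $2g-2+l(\mu)+l(\nu)=rs$ and forces $s=(2g-2+l(\mu)+l(\nu))/r$. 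I expect the main obstacle to be the reduction carried out in the second paragraph: one must justify rigorously that a contracted genus-$\gamma$ component contributes nothing to the monodromy beyond the cycle type $\lambda$ and enters only via the coefficient $a_\lambda$, and --- crucially --- that because every intermediate locus contributes the same multiplicity $r$ irrespective of how it splits into $(\lambda,\gamma)$, fixing $\chi(C)=2-2g$ constrains only the number $s$ of branch points and not the individual types, which is precisely what allows the full completed cycle, rather than a single conjugacy class, to be inserted at each point.
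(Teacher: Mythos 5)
The paper itself offers no proof of this proposition: it is stated as a direct consequence of the preceding geometric setup, with the substance deferred to the definitions and to \cite{OkoPan06,Zvo06}. Your route --- monodromy tuples up to conjugation, insertion of a central element at each of the $s$ points, evaluation by central characters and the coefficient-extraction formula, and the Riemann--Hurwitz check via the branch morphism of \cite{FanPan02} --- is exactly the intended argument, and your explicit treatment of the $|\mathrm{Aut}(\mu)|\,|\mathrm{Aut}(\nu)|$ labelling convention addresses a point the paper leaves implicit.

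There is, however, a genuine gap at the crux of your second paragraph, and it is more specific than the obstacle you flagged. You claim that summing over singular loci with their weights ``amounts to letting each $\tau_i$ run over all permutations with weight $a_{[\tau_i]}$,'' i.e.\ that the weight depends only on the cycle type of the monodromy. It does not. Distinct terms of the completed cycle induce the same monodromy: in $2!\cdot\overline{(3)} = (3) + (1,1) + \tfrac{1}{12}(1)$, both $(1,1)$ and $(1)$ correspond to identity monodromy (a contracted elliptic curve attached to two, respectively one, unramified sheets). Moreover, a singular locus of type $(\lambda,\gamma)$ with $m_1(\lambda)$ parts equal to $1$ can be attached to \emph{any} choice of $m_1(\lambda)$ of the unramified sheets, so a monodromy permutation $\tau$ whose cycles of length $\geq 2$ form the partition $\hat\lambda$ carries the total weight $\sum_{j\geq 0}\binom{d-|\hat\lambda|}{j}\,a_{\hat\lambda\cup 1^j}$, not a single coefficient. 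These binomial multiplicities are precisely the Kerov--Olshanski prescription for realizing $f_\mu$ with $|\mu|<d$ as the central character of an element of $Z\C S_d$; they are what makes your third-paragraph claim --- that the inserted element acts on $V_\lambda$ by $p_{r+1}(\lambda)/(r+1)!$ --- true. Had you inserted the naive element $\sum_\lambda a_\lambda\, C_{\lambda\cup 1^{d-|\lambda|}}$ (padded class sums without multiplicities), its eigenvalue would \emph{not} be $p_{r+1}(\lambda)/(r+1)!$ and the character formula would not come out. So to close the proof you need two things: first, that the numerical constraints force exactly one singular locus over each of the $s$ points (your branch-divisor computation essentially gives this, since each locus contributes exactly $r$ and each of the $s$ points is a branch point); second, that the geometric count of attachments of contracted components to unramified sheets reproduces exactly the binomial factors in the Kerov--Olshanski embedding --- this matching, not merely the fact that contracted components leave the monodromy unchanged, is the real content of the step you identified as the main obstacle.
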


This proposition leads to a natural definition of connected Hurwitz numbers: just replace ``not necessarily connected'' by ``connected'' in the above formulation. Connected Hurwitz numbers can be computed from the disconnected ones by the exclusion-inclusion formula, see also~\cite{OkoPan06,Zvo06}.




\section{Semi-infinite wedge formalism}\label{sectionInfiniteWedge}

In this section we sketch the theory of semi-infinite wedge space  following \cite{OkoPan06} and~\cite{Joh10}.

\subsection{Infinite wedge}

Let $V$ be an infinite dimensional vector space with basis labelled by the half integers. Denote the basis vector labelled by $m/2$ by $\underline{m/2}$, so $V = \bigoplus_{i \in \Z + \frac{1}{2}} \underline{i}$.

\begin{definition}
The semi-infinite wedge space is the span of all wedge products of the form
\begin{equation}\label{wedgeProduct}
\underline{i_1} \wedge \underline{i_2} \wedge \cdots
\end{equation}
for any decreasing sequence of half integers $(i_k)$ such that there is an integer $c$ (called the charge) with $i_k + k - \frac{1}{2} = c$ for $k$ sufficiently large.

 In this paper, we are mostly concerned with the zero charge subspace of the semi-infinite wedge space, which is the space of all wedge products of the form~\ref{wedgeProduct} such that 
\begin{equation}\label{zeroCharge}
i_k + k = \frac{1}{2}
\end{equation}
for $k$ sufficiently large. For brevity, we will call this space the infinite wedge space from now on. 
\end{definition}

\begin{remark}
An element of the infinite wedge space is of the form
$\underline{\lambda_1 - \frac{1}{2}} \wedge \underline{\lambda_2 - \frac{3}{2}} \wedge \cdots$
for some partition~$\lambda$. This follows immediately from condition~\eqref{zeroCharge}. Thus, we canonically have a basis for the infinite wedge space labelled by all partitions. The inner product associated with this basis will be denoted~$(\cdot,\cdot)$. 
\end{remark}

\begin{notation}
We denote by~$v_\lambda$ the vector labelled by a partition~$\lambda$. The vector labelled by the empty partition is called the vacuum vector and denoted by $|0\rangle = v_{\emptyset} = \underline{-\frac{1}{2}} \wedge \underline{-\frac{3}{2}} \wedge \cdots$. 
\end{notation}

\begin{notation}
If $\mathcal{P}$ is an operator on the infinite wedge space, then we define the vacuum expectation value of~$\mathcal{P}$ by
$\langle \mathcal{P}\rangle = \langle 0 |\mathcal{P}|0\rangle$,
where $\langle 0 |$ is the dual of the vacuum vector with resprect to the inner product~$(\cdot,\cdot)$, and called the covacuum vector. 
\end{notation}

\subsection{Operators} We now define some operators on the infinite wedge space.

\begin{definition} Let $k$ be any half integer. Then the operator $\psi_k$ is defined by
$\psi_k \colon (\underline{i_1} \wedge \underline{i_2} \wedge \cdots) \ \mapsto \ (\underline{k} \wedge \underline{i_1} \wedge \underline{i_2} \wedge \cdots)$. This operator acts on the whole semi-infinite wedge space (the sum of spaces with all charges). It increases the charge by $1$.

The operator $\psi_k^*$ is defined to be the adjoint of the operator $\psi_k$ with respect to the inner product~$(\cdot,\cdot)$.
\end{definition}

\begin{definition}
The normally ordered products of $\psi$-operators are defined in the following way
\begin{equation}
{:}\psi_i \psi_j^*{:}\ := \begin{cases}\psi_i \psi_j^*, & \text{ if } j > 0 \\
-\psi_j^* \psi_i & \text{ if } j < 0\ .\end{cases} 
\end{equation}
This operator does not change the charge and can be restricted to the infinite wedge space. Its action on the basis vectors $v_\lambda$ can be described as follows: ${:}\psi_i \psi_j^*{:}$ checks if $v_\lambda$ contains $\underline{j}$ as a wedge factor and if so replaces it by $\underline{i}$. Otherwise it yields~$0$. In the case $i=j > 0$, we have ${:}\psi_i \psi_j^*{:}(v_\lambda) = v_\lambda$ if $v_\lambda$ contains $\underline{j}$ and $0$ if it does not; in the case  $i=j < 0$, we have ${:}\psi_i \psi_j^*{:}(v_\lambda) = - v_\lambda$ if $v_\lambda$ does not contain $\underline{j}$ and $0$ if it does. These are the only two cases where the normal ordering is important.
\end{definition}

\begin{remark}
Let $E_{ij}$ for $i, j \in \Z + \frac{1}{2}$ denote the standard basis of matrix units of $\mathfrak{gl}(\infty) = \mathfrak{gl}(V)$. Then the assignment $E_{ij} \mapsto\ {:}\psi_i \psi_j^*{:}$ defines a projective representation of the Lie algebra $\mathfrak{gl}(V)$ on~$\Lambda^{\frac{\infty}{2}}(V)$. 
\end{remark}

\begin{notation}\label{notationZeta}
We denote by $\zeta(z)$ the function $e^{z/2} - e^{-z/2}$.
\end{notation}

\begin{definition}
Let $n \in \Z$ be any integer. We define two operators $\mathcal{E}_n(z)$ and $\tilde{\mathcal{E}}_n(z)$ depending on a formal variable~$z$ by
\begin{align*}
\mathcal{E}_n(z) &= \sum_{k \in \Z + \frac{1}{2}} e^{z(k - \frac{n}{2})} E_{k-n,k} + \frac{\delta_{n,0}}{\zeta(z)} \\
\tilde{\mathcal{E}}_n(z) &= \sum_{k \in \Z + \frac{1}{2}} e^{z(k - \frac{n}{2})} E_{k-n,k} \ .
\end{align*}
The operator $\mathcal{E}_n(z)$ is called the deregularized $\mathcal{E}$-operator, while $\tilde{\mathcal{E}}_n(z)$ is the regularized $\mathcal{E}$-operator.
\end{definition}

\subsection{Hurwitz numbers with completed cycles}

\begin{notation}\label{Not:alphaF} We denote by $\alpha_n$, $n\not= 0$, the operator $\E_n(0)$. We denote by $\mathcal{F}_{r+1}$, $r\geq 1$, the operator $[z^{r+1}]\tilde\E_0(z)$.
\end{notation}

\begin{proposition}\label{VacuumFormCompletedHurwitz}
A double Hurwitz number with completed cycles can be expressed as a vacuum expectation value in the infinite wedge space in the following way:
\begin{equation}\label{eq:oper-formula}
h_{\mu,\nu}^{(r),s} = \frac{\langle\prod_{i=1}^{l(\mu)}\alpha_{\mu_i} \mathcal{F}_{r+1}^s \prod_{i=1}^{l(\nu)} \alpha_{-\nu_i}\rangle}{\prod_{i=1}^{l(\mu)} \mu_i \prod_{j=1}^{l(\nu)} \nu_j}.
\end{equation}
\end{proposition}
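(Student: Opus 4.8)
The plan is to evaluate the right-hand side of~\eqref{eq:oper-formula} by inserting the spectral decomposition of the diagonal operator $\mathcal{F}_{r+1}$ between the creation and annihilation operators, and then to identify the remaining matrix elements of the $\alpha$'s with symmetric group characters; this reduces the operator formula to the character formula~\eqref{eq:char-formula} term by term. First I would diagonalize $\mathcal{F}_{r+1}$. The operator $\tilde{\mathcal{E}}_0(z) = \sum_{k\in\Z+\frac12} e^{zk}\,{:}\psi_k\psi_k^*{:}$ is diagonal in the basis $\{v_\lambda\}$, since each ${:}\psi_k\psi_k^*{:}$ is. Using the explicit action recorded after the definition of normally ordered products — namely ${:}\psi_k\psi_k^*{:}\,v_\lambda = v_\lambda$ when $k>0$ and $\underline{k}$ occurs in $v_\lambda$, and $= -v_\lambda$ when $k<0$ and $\underline{k}$ does not occur — together with the description of $v_\lambda$ as $\underline{\lambda_1 - \frac12}\wedge\underline{\lambda_2 - \frac32}\wedge\cdots$, the eigenvalue of $\tilde{\mathcal{E}}_0(z)$ on $v_\lambda$ telescopes to
\begin{equation*}
\sum_{i=1}^\infty\left(e^{z(\lambda_i - i + \frac12)} - e^{z(-i+\frac12)}\right).
\end{equation*}
Expanding in $z$ and comparing with Definition~\ref{powerSum} shows that the coefficient of $z^{r+1}$ is exactly $p_{r+1}(\lambda)/(r+1)!$; hence $\mathcal{F}_{r+1}\,v_\lambda = \bigl(p_{r+1}(\lambda)/(r+1)!\bigr)\,v_\lambda$ and $\mathcal{F}_{r+1}^s\,v_\lambda = \bigl(p_{r+1}(\lambda)/(r+1)!\bigr)^s\,v_\lambda$.

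Next I would identify the action of the creation operators on the vacuum. The operators $\alpha_n$, $n\neq 0$, generate a Heisenberg algebra with $[\alpha_m,\alpha_{-m}] = m$, and the central fact I would invoke is the standard boson--fermion dictionary: under the identification of the charge-zero infinite wedge with the ring of symmetric functions sending $v_\lambda$ to the Schur function $s_\lambda$, the operator $\alpha_{-n}$ becomes multiplication by the power sum $p_n$. Consequently $\prod_i \alpha_{-\nu_i}|0\rangle$ corresponds to the product $p_\nu$, and the classical expansion $p_\nu = \sum_\lambda \chi_\nu^\lambda\,s_\lambda$ yields
\begin{equation*}
\prod_{i=1}^{l(\nu)} \alpha_{-\nu_i}\,|0\rangle = \sum_{|\lambda|=|\nu|} \chi_\nu^\lambda\, v_\lambda,
\qquad
\langle 0|\,\prod_{i=1}^{l(\mu)} \alpha_{\mu_i} = \sum_{|\lambda|=|\mu|} \chi_\mu^\lambda\, \langle v_\lambda|,
\end{equation*}
the second identity following from the first by taking adjoints, since $\alpha_n^* = \alpha_{-n}$, the positive-index $\alpha$'s commute, and the characters are real. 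Alternatively these can be checked by a direct computation of the fermionic matrix elements $(v_\lambda, \prod_i \alpha_{-\nu_i}|0\rangle)$.

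Finally I would assemble the pieces. Substituting the two character expansions and the eigenvalue of $\mathcal{F}_{r+1}^s$ into the numerator of~\eqref{eq:oper-formula}, and using that $\{v_\lambda\}$ is orthonormal for $(\cdot,\cdot)$, all cross terms drop out and the charge grading forces $|\lambda|=d$, so the vacuum expectation value collapses to $\sum_{|\lambda|=d}\chi_\mu^\lambda\,\chi_\nu^\lambda\,\bigl(p_{r+1}(\lambda)/(r+1)!\bigr)^s$, which is precisely the numerator of~\eqref{eq:char-formula}. Dividing by $\prod_i\mu_i\prod_j\nu_j$ gives the claim. The step I expect to require the most care is the character identity for the creation operators: pinning down the exact normalization, so that no stray factor of $z_\nu$ or $1/z_\nu$ appears, is where the conventions for $\alpha_n = \mathcal{E}_n(0)$, the commutator $[\alpha_m,\alpha_{-m}]=m$, and the Hall inner product must be matched consistently.
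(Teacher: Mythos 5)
Your proof is correct and follows essentially the same route as the paper's: the paper also reduces \eqref{eq:oper-formula} to the character formula \eqref{eq:char-formula} by combining the character expansion $\prod_i \alpha_{-\nu_i}|0\rangle = \sum_{|\lambda|=|\nu|}\chi_\nu^\lambda v_\lambda$ (cited there from Kac--Raina and Johnson) with the fact that $v_\lambda$ is an eigenvector of $\mathcal{F}_{r+1}$ with eigenvalue $p_{r+1}(\lambda)/(r+1)!$. The only difference is that you supply proofs of these two standard facts (the telescoping eigenvalue computation and the boson--fermion derivation of the character identity) where the paper simply cites them.
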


\begin{proof} This is just a way to rewrite Equation~\eqref{eq:char-formula}. 

Indeed, the standard facts in the infinite wedge formalism are that 
$\prod_{i=1}^{l(\mu)} \alpha_{-\mu_i} | 0 \rangle = \sum_{|\lambda| = |\mu|} \chi_\mu^\lambda v_\lambda$, for any partition $\mu$, and
$\langle 0| \prod_{i=1}^{l(\mu)} \alpha_{\mu_i} v_\lambda = \chi_\mu^\lambda$, for any partitions $\lambda$ and $\mu$ such that $|\lambda|=|\mu|$. It is proved, e.~g., in~\cite{KacRai87} or~\cite{Joh10}.

Meanwhile, $v_\mu$ is an eigenvector of $\mathcal{F}_{r+1}$ with eigenvalue $p_{r+1}(\mu)/(r+1)!$, where $p_{r+1}$ is the shifted symmetric power sum defined in Definition~\ref{powerSum}, and $\mu$ is an arbitrary partition.

After these two observations, Equations~\eqref{eq:oper-formula} and~\eqref{eq:char-formula} are clearly equivalent.
\end{proof}




\section{Computation of Hurwitz numbers}\label{sectionAlgorithm}

In this section we generalize the algorithm for computation of double Hurwitz numbers in~\cite{Joh10} to the case of double Hurwitz numbers with completed $(r+1)$-cycles. This algorithm presents several advantages with respect to direct copmputations with characters. In particular, it will allow us to prove the piecewise polynomiality of Hurwitz numbers. In Sections~\ref{Sec:OnePart} and~\ref{Sec:CoefCompCycles} we also show that in certain cases it leads to quite explicit expressions for Huwritz numbers, that cannot be directly deduced from the character formulas. 

Throughout the section we fix $r\geq 1$ (and therefore omit it in all notations). We also fix two partitions, $\mu$ and $\nu$, such that $|\mu|=|\nu|$, and an integer $s\geq 0$. They are the ramification profiles over two special points and the number of completed cycles of a particular Hurwitz number that we consider here. 

\subsection{Notations and properties of operators}
For any subsets $I\subset [l(\mu)]$, $J\subset [l(\nu)]$, and $K\subset [s]$ we introduce the following notation.
We denote by $\mu_I$, $\nu_J$, and $z_K$ the sums
$\mu_I :=\sum_{i\in I}\mu_i$, $\mu_J :=\sum_{j\in J}\nu_j$, and $z_K :=\sum_{k \in K} z_k$.
We denote by $\E(I,J,K)$ the operator
$\E_{\mu_I - \nu_J}\left(z_K\right)$. The same piece of notation we use also in the case of $\tilde\E$. 
Let $M\subset [l(\mu)]$, $N\subset [l(\nu)]$, and $L\subset [s]$. The last piece of notation we need is
\begin{equation}
\mZeta IJKMNL 
:= \zeta\left(\det \left( {\begin{smallmatrix}
 \mu_I - \nu_J & z_K \\
 \mu_M - \nu_N & z_L
 \end{smallmatrix} }\right)\right) ,
\end{equation}
where $\zeta(z) = e^{z/2} - e^{-z/2}$ is as defined in Notation~\ref{notationZeta}.

Using this notation, we can rewrite a Hurwitz number with completed cycles as
\begin{align}\label{algorithmVacuumExpectation}
h_{\mu,\nu}^{s} &= \frac{1}{\prod_{i=1}^{l(\mu)} \mu_i \prod_{j=1}^{l(\nu)} \nu_j}\cdot [z_1^{r+1} \cdots z_s^{r+1}] \\
\notag & \left\langle \prod_{i=1}^{l(\mu)}\mathcal{E}(\{i\},\emptyset,\emptyset)  \prod_{k=1}^{s}\tilde{\mathcal{E}}(\emptyset,\emptyset,\{k\})
\prod_{j=1}^{l(\nu)}\mathcal{E}(\emptyset,\{j\},\emptyset)\right\rangle .
\end{align}
By the symbol $[z_1^{r+1} \cdots z_s^{r+1}]$ we denote the coefficient of the monomial $z_1^{r+1} \cdots z_s^{r+1}$ in the formal power series which follows it. 

We will use a special case of the commutation relation~(2.17) in~\cite{OkoPan06} that in our notation is given by the following lemma.
\begin{lemma}\label{commutationRelation}
For any subsets $I, M \subset [l(\mu)]$, $J,N \subset [l(\nu)]$ and $K,L \subset [s]$ such that $I \cap M=J \cap N=K \cap L=\emptyset$, we have
\begin{align}\label{commutationEquation}
 \left[\E(I, J, K) ,\E(M, N, L) \right]
= \mZeta I J K M N L
\E\left(I \cup M, J \cup N, K \cup L\right).
\end{align}
\end{lemma}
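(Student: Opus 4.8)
The plan is to derive the commutation relation directly from the definition of the $\E$-operators as sums of matrix units, reducing everything to the basic commutator of two matrix units $E_{ab}$ in $\mathfrak{gl}(\infty)$. The key algebraic input is the elementary identity $[E_{ab},E_{cd}] = \delta_{bc}E_{ad} - \delta_{da}E_{cb}$, valid under the usual projective representation. Writing $\E(I,J,K) = \E_{\mu_I-\nu_J}(z_K) = \sum_k e^{z_K(k-(\mu_I-\nu_J)/2)}E_{k-(\mu_I-\nu_J),k} + \tfrac{\delta}{\zeta(z_K)}$ (the constant term surviving only when the index shift $\mu_I-\nu_J$ vanishes), I would substitute both operators into the bracket, expand the double sum over matrix units, and apply the matrix-unit commutator term by term.

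\textbf{Step one: set up and expand.} First I would abbreviate $a := \mu_I-\nu_J$, $b := \mu_M-\nu_N$, $u := z_K$, $w := z_L$, so the two operators carry shifts $a$ and $b$ and formal variables $u$ and $w$. Substituting the matrix-unit expansions and using bilinearity of the bracket, each cross term is a commutator $[E_{k-a,k},E_{l-b,l}]$ with scalar coefficient $e^{u(k-a/2)}e^{w(l-b/2)}$. The matrix-unit identity forces $k = l-b$ in the first surviving term and $l = k-a$ in the second, which collapses each inner sum to a single index and produces matrix units of the form $E_{m-(a+b),m}$ — precisely the constituents of $\E(I\cup M, J\cup N, K\cup L)$, whose shift is $a+b$ and whose variable is $u+w$.

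\textbf{Step two: reassemble the coefficient.} After the Kronecker deltas fire, I would reindex both surviving sums to a common summation variable $m$ running over $\Z+\tfrac12$, and factor out the common exponential $e^{(u+w)(m-(a+b)/2)}$ that matches the matrix units of $\tE_{a+b}(u+w)$. The residual scalar prefactor multiplying this sum should be exactly $e^{(ub-wa)/2} - e^{-(ub-wa)/2} = \zeta(ub - wa)$; and $ub - wa = (\mu_I-\nu_J)z_L - (\mu_M-\nu_N)z_K$ up to sign is the determinant appearing in the definition of $\mZeta IJKMNL$. Matching signs and the argument of $\zeta$ against the stated $2\times 2$ determinant is the bookkeeping heart of the verification.

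\textbf{The main obstacle} is handling the scalar/central correction terms cleanly: the $\delta_{n,0}/\zeta(z)$ piece in $\E_n(z)$ is central, so it does not contribute to the bracket on the left, yet on the right the output operator $\E(I\cup M,\dots)$ carries such a term exactly when $a+b=0$. I must check that the $\zeta$-prefactor times this correction reproduces the anomaly correctly — in Okounkov–Pancharapande this is the source of the $\delta$-function/central extension term, and it is why one uses the \emph{regularized} $\tE$ in mixed products. Concretely I would verify the $a+b=0$ case separately, confirming that the finite constant generated by the two semi-infinite sums (a regularized trace over half-integers) matches $\zeta(\det)\cdot\delta_{a+b,0}/\zeta(u+w)$, which is the one genuinely delicate point; the rest is term-by-term matching. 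The convergence of these sums as formal power series in $u,w$ is automatic since the exponentials are expanded formally, so no analytic issue arises.
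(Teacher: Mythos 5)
Your proposal supplies a proof that the paper itself does not contain: the paper states this lemma as a special case of the commutation relation (2.17) of Okounkov--Pandharipande and gives no argument, so the comparison here is between your self-contained derivation and a citation. Your route --- expand both operators in matrix units, apply the $\mathfrak{gl}(\infty)$ bracket term by term, reindex, and treat the zero-energy case separately --- is exactly the standard proof of the cited relation, and it does go through. Writing $a=\mu_I-\nu_J$, $b=\mu_M-\nu_N$, $u=z_K$, $w=z_L$, the two Kronecker-delta terms collapse to $e^{(aw-bu)/2}\,\tE_{a+b}(u+w)$ and $-e^{-(aw-bu)/2}\,\tE_{a+b}(u+w)$, giving the prefactor $\zeta(aw-bu)$; and when $a+b=0$ the central term of the projective representation contributes a finite geometric sum of $|a|$ terms $e^{(u+w)j}$ with $j$ symmetric about $0$, which equals $\zeta(a(u+w))/\zeta(u+w)$ and thus exactly reproduces the $\delta_{n,0}/\zeta(u+w)$ correction carried by $\E_0(u+w)$ on the right-hand side. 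So the ``genuinely delicate point'' you isolate does check out, and isolating it is the right way to organize the proof. What your approach buys is independence from the reference and transparency about where the deregularization term $1/\zeta$ comes from (it is forced by the anomaly); what the paper's citation buys is brevity and guaranteed consistency of conventions with \cite{OkoPan06}.

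Two corrections before you write this up. First, your opening identity $[E_{ij},E_{kl}]=\delta_{jk}E_{il}-\delta_{li}E_{kj}$ is false in the projective representation: the correct statement is $[E_{ij},E_{kl}]=\delta_{jk}E_{il}-\delta_{li}E_{kj}+\delta_{jk}\delta_{il}\left(\delta_{i<0}-\delta_{j<0}\right)$, and the central summand is precisely what you need in the $a+b=0$ case; your ``main obstacle'' paragraph shows you know this, but the derivation should start from the corrected identity rather than patch it in afterwards. Second, the sign: your tentative prefactor $\zeta(ub-wa)$ is the \emph{negative} of the correct one, since $\zeta$ is odd and the careful bookkeeping gives $\zeta(aw-bu)$, which is $\zeta$ of the paper's determinant on the nose, not merely up to sign. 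Concretely, the term in which the column index of the first operator matches the row index of the second (the delta $\delta_{k,\,l-b}$) carries $e^{+(aw-bu)/2}$ and the other term carries $-e^{-(aw-bu)/2}$. Since this lemma is applied quantitatively throughout Sections 4--6 (signs of commutator terms drive the piecewise polynomiality and wall-crossing arguments), the sign must come out exactly, so this step cannot be left as deferred bookkeeping.
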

\begin{remark}\label{regularizedCommutation}
Lemma~\ref{commutationRelation} is also true if one of the $\E$-operators on the left hand side of the Equation~\eqref{commutationEquation} is replaced by $\tilde\E$.
\end{remark}

\subsection{An algorithm for computation}\label{sec:alg}
We say that the operator $\mathcal{E}_n(z)$ has positive (resp., negative, zero) energy if the integer~$n$ is positive (resp., negative, zero).
We see immediately that $\mathcal{E}(I,J,K)|0\rangle$ (resp., $\langle 0| \mathcal{E}(I,J,K)$) is zero when $\mathcal{E}(I,J,K)$ has positive (resp., negative) energy. The vacuum expectation value in Equation~\eqref{algorithmVacuumExpectation} has operators of negative energy on the right and operators of positive energy on the left; by commuting them, we will be able to make use of that observation. 

\begin{remark}
Below we present an algorithm that computes the vacuum expectation value on the right hand side of Equation~\eqref{algorithmVacuumExpectation}. It will consist in commuting operators of negative energy to the left. Since $\tilde{\mathcal{E}}(\emptyset,\emptyset,k)|0\rangle = 0$, each of $\tilde\E$-operators must be involved in a commutator in our computations. Then Remark~\ref{regularizedCommutation} implies that we might as well have started with the vacuum expectation value
\begin{equation}\label{algorithmVacuumExpectation2}
\left\langle \prod_{i=1}^{l(\mu)}\mathcal{E}(\{i\},\emptyset,\emptyset)  \prod_{k=1}^{s}\mathcal{E}(\emptyset,\emptyset,\{k\})
\prod_{j=1}^{l(\nu)}\mathcal{E}(\emptyset,\{j\},\emptyset)\right\rangle .
\end{equation}
if we additionally demand that each of the zero energy operators will be involved in a commutator at a certain step of the algorithm. 
\end{remark}

Now we describe the algorithm. Note that the vacuum expectation value~\eqref{algorithmVacuumExpectation2} is of the form 
\begin{equation}\label{AlgorithmStep}
\prod_{q \in Q}\mZeta {F_q}  {G_q}  {H_q} {M_q} {N_q} {L_q}
\left\langle \prod_{t \in T} \mathcal{E}(I_t, J_t, K_t) \right\rangle
\end{equation}
for some finite index sets~$T$ and~$Q$ and subsets 
$I_t, F_q, M_q \subset [l(\mu)]$, 
$J_t, G_q, N_q \subset [l(\nu)]$, 
$K_t, H_q, L_q \subset [s]$.

At the beginning of any step in the algorithm we have vacuum expectation value of this form. Suppose that the operators in the vacuum expectation value of some step do not all have zero energy. Then the step will consist in the following actions. 

Let $t_0$ be the index of the leftmost operator of negative energy. If it is the leftmost operator, then this vacuum expectation value is zero by the remarks above and the algorithm terminates. If it is not, commute it to the left, that is, apply the equality 
\begin{align}
&  \E(I_{t_0 - 1}, J_{t_0 - 1}, K_{t_0 - 1})\E(I_{t_0},J_{t_0},K_{t_0}) \\ \notag
 & =\E(I_{t_0},J_{t_0},K_{t_0})\E(I_{t_0 - 1}, J_{t_0 - 1}, K_{t_0 - 1})\\ \notag
 &+ [\mathcal{E}(I_{t_0 - 1}, J_{t_0 - 1}, K_{t_0 - 1}),\mathcal{E}(I_{t_0},J_{t_0},K_{t_0})] .
\end{align}
The vacuum expectation resulting from the first (resp., second) term on the right hand side is called the passing (resp., commutator) term. By Lemma~\ref{commutationRelation}, the commutator in the commutator term is equal to 
\begin{equation}
\mZeta {I_{t_0 - 1}} {J_{t_0 - 1}} {K_{t_0 - 1}} {I_{t_0}}  {J_{t_0}}  {K_{t_0}}
\mathcal{E}(I_{t_0} \cup I_{t_0 - 1}, J_{t_0} \cup J_{t_0 - 1}, K_{t_0} \cup K_{t_0 - 1}).
\end{equation}
We now choose either the passing term or the commutator term and continue the algorithm with it. 

In the end we will sum over the contributions from all possible choices. Because both the passing term and the commutator term are again of the form~\eqref{AlgorithmStep}, we can iterate this procedure. The algorithm terminates when the result is zero because an operator of negative energy is on the far left, or one of the $\E(\emptyset,\emptyset,\{k\})$ wasn't commuted with any negative energy operators and moved to the far right, or an operator of positive energy is on the far right. It also terminates when all operators in the vacuum expectation have zero energy.


Since taking the passing term results in having an operator of negative energy further to the left and taking the commutator term results in having one less operator in the vacuum expectation value, the algorithm will terminate for any values of the partitions $\mu$ and~$\nu$ and any non-negative integer~$s$.

\begin{remark} Since we demand $|\mu| = |\nu|$ (otherwise the Hurwitz number $h_{\mu,\nu}^s$ makes no sense), a vacuum expectation value with only one operator can only appear in the algorithm if this operator has zero energy.
\end{remark}

\subsection{Commutation pattern} Using the algorithm in the previous section, we can give an expression for a Hurwitz number in terms of $\zeta$-functions.

\begin{definition} A commutation pattern $P$ is a set of six-tuples of sets 
$\{(P_I^l, P_J^l, P_K^l, P_M^l, P_N^l, P_R^l)\}_{l \in L(P)}$
(where $L(P) := [|L|]$ is some index set) such that we get a non-vanishing contribution to the vacuum expection value~\eqref{algorithmVacuumExpectation} when we go through the algorithm in such a way that the $l$-th commutator computed is
$[\mathcal{E}(P_I^l, P_J^l, P_K^l)\ ,\ \mathcal{E}(P_M^l, P_N^l, P_R^l)].$
\end{definition} 

The set of all commutation patterns for given values of $\mu$, $\nu$, and $s$ is denoted by $CP_{\mu,\nu}^{s}$.


Note that the final vacuum expectation value in any commutation pattern~$P\in CP_{\mu,\nu}^{s}$ will always be the vacuum expectation of a product of zero energy operators, that is, it will always be of the form 
$\langle \prod_{t \in T(P)} \mathcal{E}_0(z_{S_t}) \rangle$
for some index set~$T(P)$ and some non-intersecting subsets ${S_t} \subset [s]$, $t\in T(P)$ whose union is equal to~$[s]$. 


\begin{theorem}\label{disconnectedTheorem}
The Hurwitz number $h_{\mu,\nu}^s$ is given by the following formula:
\begin{align}\label{formulaHurwitz}
h_{\mu,\nu}^{s} & = \frac{1}{\prod_{i=1}^{l(\mu)} \mu_i \prod_{j=1}^{l(\nu)} \nu_j}[z_1^{r+1}\cdots z_s^{r+1}] \\
& \sum_{P \in CP_{\mu,\nu}^{s}}\prod_{t \in T(P)} \frac{1}{\zeta(z_{S_t})} \prod_{l\in L(P)}\mZeta 
{ P_I^l}{P_J^l}{P_K^l}{P_M^l}{P_N^l}{P_R^l}
 \notag
\end{align}
\end{theorem}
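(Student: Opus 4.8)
The plan is to show that the algorithm described in Section~\ref{sec:alg}, together with the commutation relation of Lemma~\ref{commutationRelation}, reorganizes the vacuum expectation value~\eqref{algorithmVacuumExpectation} as a finite sum over commutation patterns, each term of which is precisely a product of $\zeta$-factors times a vacuum expectation of zero-energy operators. Since we have already observed (Remark~\ref{regularizedCommutation}) that we may replace $\tilde\E$ by $\E$ in~\eqref{algorithmVacuumExpectation2} provided every zero-energy operator is involved in at least one commutator, the theorem is really a bookkeeping statement: it asserts that running the algorithm and summing over all branches produces exactly formula~\eqref{formulaHurwitz}.

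First I would argue that the algorithm terminates and that its output is a finite sum of terms of the form~\eqref{AlgorithmStep}. Termination is already established in the text: each branch either moves a negative-energy operator strictly to the left (the passing term) or strictly decreases the number of operators in the expectation value (the commutator term), so no infinite descent is possible. Each leaf of the resulting binary tree of choices is a term of the form~\eqref{AlgorithmStep}, and the surviving (non-vanishing) leaves are, by definition, indexed by the commutation patterns $P \in CP_{\mu,\nu}^{s}$. The key point to verify here is that a leaf is nonzero only when its residual expectation value $\langle \prod_{t \in T} \E(I_t, J_t, K_t)\rangle$ has all operators of zero energy: any leftover negative-energy operator on the far left, or positive-energy operator on the far right, annihilates the (co)vacuum. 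Combined with the constraint $|\mu| = |\nu|$, this forces the surviving residual expectation to be exactly $\langle \prod_{t \in T(P)} \E_0(z_{S_t})\rangle$, as claimed in the paragraph preceding the theorem.

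Next I would track the numerical weight accumulated along each surviving branch. Every time a commutator term is taken, Lemma~\ref{commutationRelation} contributes precisely one factor $\mZeta{I_{t_0-1}}{J_{t_0-1}}{K_{t_0-1}}{I_{t_0}}{J_{t_0}}{K_{t_0}}$, which in the pattern notation is recorded as $\mZeta{P_I^l}{P_J^l}{P_K^l}{P_M^l}{P_N^l}{P_R^l}$ for the $l$-th commutator. Taking a passing term contributes no factor. Hence the total prefactor collected by pattern $P$ is exactly $\prod_{l \in L(P)} \mZeta{P_I^l}{P_J^l}{P_K^l}{P_M^l}{P_N^l}{P_R^l}$, matching the second product in~\eqref{formulaHurwitz}. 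The remaining residual expectation $\langle \prod_{t \in T(P)} \E_0(z_{S_t})\rangle$ must then be evaluated: each $\E_0(z_{S_t})$ carries a $\delta_{0,0}/\zeta(z_{S_t}) = 1/\zeta(z_{S_t})$ summand, and on the vacuum only these scalar pieces survive, yielding $\prod_{t \in T(P)} 1/\zeta(z_{S_t})$, which is the first product in~\eqref{formulaHurwitz}. Substituting everything back, multiplying by the normalization $1/(\prod \mu_i \prod \nu_j)$ and extracting $[z_1^{r+1}\cdots z_s^{r+1}]$ reproduces~\eqref{formulaHurwitz} exactly.

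The main obstacle I anticipate is the careful evaluation of the final zero-energy expectation $\langle \prod_{t \in T(P)} \E_0(z_{S_t})\rangle$ and the verification that it equals $\prod_t 1/\zeta(z_{S_t})$ with no cross-terms. A priori the zero-energy operators $\E_0(z_{S_t})$ do not commute, and each contains both the scalar $1/\zeta$ piece and the genuinely operator-valued sum $\sum_k e^{z_{S_t}(k)} E_{kk}$; one must check that, once every $\E_0$ is guaranteed to have been produced as the result of a commutator (as enforced in the reduction from~\eqref{algorithmVacuumExpectation} to~\eqref{algorithmVacuumExpectation2}), the only contribution to the vacuum expectation is the product of the scalar terms. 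This hinges on the fact that the operator part of $\tilde\E_0(z)$ annihilates the vacuum up to the regularization constant, so that after the substitution the residual expectation genuinely factors as the claimed product of reciprocal $\zeta$-functions. I would treat this disentangling, rather than the combinatorial tree-sum, as the delicate step; the rest is a direct transcription of the algorithm's output into the pattern notation.
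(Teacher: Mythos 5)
Your proposal is correct and follows essentially the same route as the paper: the paper's own proof simply states that the theorem follows immediately from Equation~\eqref{algorithmVacuumExpectation} and the algorithm of Section~\ref{sec:alg}, using $\langle \prod_{t \in T(P)} \E_0(z_{S_t})\rangle = \prod_{t \in T(P)} 1/\zeta(z_{S_t})$, which is exactly the bookkeeping you carry out in detail. The only remark worth making is that the step you flag as delicate is in fact immediate: each $E_{k,k}$ annihilates the vacuum, so every $\E_0(z_{S_t})$ acts on $|0\rangle$ as the scalar $1/\zeta(z_{S_t})$, and no disentangling of cross-terms is needed.
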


\begin{proof}
This follows immediately from Equation~\eqref{algorithmVacuumExpectation} and the description of the algorithm in Section~\ref{sec:alg}. We use that 
$\langle \prod_{t \in T(P)} \E_0(z_{S_t})\rangle = 1/\prod_{t \in T(P)} \zeta(z_{S_t})$ 
which follows directly from the definition of the operators~$\E_0(z)$. 
\end{proof}

\begin{remark}
In Theorem~\ref{disconnectedTheorem}, a factor $\zeta(0)\mathcal{E}_0(0)$ coming from the commutator $[\mathcal{E}_n(0),\mathcal{E}_{-n}(0)]$ in the contribution of a commutation pattern to the Hurwitz number should be interpreted in the following way. When the factor is produced, replace the zero in the argument of the $\zeta$-function by $n$ times a formal variable~$t$, and replace the zero in the argument of the $\mathcal{E}$-operator by~$t$. Then when the whole commutation pattern is completed, let $t$ go to zero. We see immediately that this is the same as replacing the commutator $[\mathcal{E}_n(0),\mathcal{E}_{-n}(0)]$ by the scalar operator~$n$ instead of by $\zeta(0)\mathcal{E}_0(0)$. This also agrees with the analysis in~\cite{OkoPan06} (Equation~2.19).
\end{remark}

\subsection{Connected Hurwitz numbers}
The algorithm also allows for the computation of the connected Hurwitz number. For this, we need one more definition. 

\begin{definition}
A commutation pattern~$P\in CP_{\mu,\nu}^{s}$ is called \emph{connected} if the set $T(P)$ consists of exactly one element. 
The set of all connected commutation patterns is denoted by~$CP_{\mu,\nu}^{s,\circ}$. 
\end{definition}

\begin{theorem}\label{connectedTheorem}
The connected Hurwitz number $h_{\mu,\nu}^{s,\circ}$ is equal to
\begin{align} \label{eq:connected}
h_{\mu,\nu}^{s,\circ} & = \frac{1}{\prod_{i=1}^{l(\mu)} \mu_i \prod_{j=1}^{l(\nu)} \nu_j}[z_1^{r+1}\cdots z_s^{r+1}] \\ \notag
& \frac{1}{\zeta(z_{[s]})} \sum_{P \in CP_{\mu,\nu}^{s,\circ}} \prod_{l\in L(P)}\mZeta
 {P_I^l} {P_J^l} {P_K^l} {P_M^l} {P_N^l} {P_R^l}.
\end{align}
\end{theorem}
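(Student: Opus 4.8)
The plan is to deduce the connected formula from the disconnected formula of Theorem~\ref{disconnectedTheorem} together with the standard inclusion--exclusion relation between connected and disconnected Hurwitz numbers recalled after the geometric interpretation. The starting observation is that every commutation pattern $P\in CP_{\mu,\nu}^{s}$ ends in a vacuum expectation $\langle \prod_{t\in T(P)}\mathcal E_0(z_{S_t})\rangle$ of zero-energy operators, and that each such factor records one connected component of the corresponding covering: the block $S_t\subset[s]$ of branch points together with the parts of $\mu$ and $\nu$ merged into the $t$-th final operator. Thus $T(P)$ induces a set partition $\pi(P)$ of the combined index data $[l(\mu)]\sqcup[l(\nu)]\sqcup[s]$ into blocks that are \emph{balanced} (i.e.\ $\mu_I=\nu_J$ on each block, forced by the zero-energy condition $\mathcal E_0$).

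First I would prove that the summand of~\eqref{formulaHurwitz} attached to $P$ factorizes over the blocks of $\pi(P)$. The key point is that the commutator $[\mathcal E(I,J,K),\mathcal E(M,N,L)]$ merges the index sets of the two operators, so two operators can be combined only if they feed into the same final zero-energy operator; hence $L(P)$ splits as $\bigsqcup_t L_t$, where $L_t$ collects the commutators building the $t$-th component, and every six-tuple in $L_t$ involves only indices belonging to the block $S_t$. Consequently
\[
\prod_{t\in T(P)}\frac{1}{\zeta(z_{S_t})}\prod_{l\in L(P)}\mZeta{P_I^l}{P_J^l}{P_K^l}{P_M^l}{P_N^l}{P_R^l}
=\prod_{t\in T(P)}\Bigl(\frac{1}{\zeta(z_{S_t})}\prod_{l\in L_t}\mZeta{P_I^l}{P_J^l}{P_K^l}{P_M^l}{P_N^l}{P_R^l}\Bigr).
\]
I would then establish that $P\mapsto (P|_{S_t})_{t\in T(P)}$ is a bijection between commutation patterns with a prescribed balanced block structure $\pi$ and tuples of \emph{connected} commutation patterns on the individual blocks: the restriction of a valid pattern to a block is a connected pattern on that block, and conversely disjoint connected patterns on the blocks can always be assembled into a single valid run of the algorithm by passing operators of distinct blocks past one another (a passing move contributes no $\zeta$-factor) and performing the within-block commutators in the prescribed order. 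Since the monomial extraction $[z_1^{r+1}\cdots z_s^{r+1}]$ also factorizes over the disjoint variable sets $\{z_k:k\in S_t\}$, summing first over all patterns with block structure $\pi$ and then over $\pi$ yields
\[
h_{\mu,\nu}^{s}=\sum_{\pi}\ \prod_{B\in\pi} g_B,
\]
where $g_B$ denotes the right-hand side of~\eqref{eq:connected} evaluated on the sub-profile $B$.

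Finally I would invoke the inclusion--exclusion relation $h_{\mu,\nu}^{s}=\sum_{\pi}\prod_{B\in\pi}h_B^{\circ}$, which holds for every sub-profile and determines the connected numbers uniquely from the disconnected ones. Comparing it with the factorized expression above and inducting on $l(\mu)+l(\nu)+s$ (the trivial partition into a single block occurs in both sums, whereas every finer partition involves strictly smaller blocks for which $g_B=h_B^{\circ}$ already holds by the inductive hypothesis) forces $g_{\mu,\nu}=h_{\mu,\nu}^{s,\circ}$, which is precisely~\eqref{eq:connected}.

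I expect the main obstacle to be the bijection in the second paragraph. Because the algorithm always acts on the leftmost operator of negative energy, it is not immediate that disjoint connected patterns on the blocks can be interleaved into a single valid, non-vanishing run, nor that distinct interleavings are not over- or under-counted. Making this precise requires checking that cross-block passing moves are always permissible and never affect vanishing, and that a commutation pattern is genuinely recorded by its unordered collection of block-internal commutators, so that no spurious interleaving multiplicities appear and the factorization over components is exact.
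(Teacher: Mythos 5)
Your proposal is correct and takes essentially the same route as the paper: both proofs rest on the observation that a commutation pattern's contribution factorizes over the blocks determined by its final zero-energy operators $\mathcal{E}_0(z_{S_t})$ (cross-block commutators are never taken), combined with the inclusion--exclusion relation between connected and disconnected Hurwitz numbers. The paper's version is terser --- it asserts the non-interaction of operators feeding different final $\mathcal{E}_0(z_{S_t})$ and then matches the two inclusion--exclusion formulas directly --- whereas you make explicit the block bijection, the interleaving subtlety behind it, and the inductive uniqueness argument, all of which the paper leaves implicit.
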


\begin{proof} Since $\mathcal{E}_0(z)$ is a scalar operator, for any commutation pattern $P \in CP_{\mu,\nu}^{s}$ we have 
$\langle \prod_{t \in T(P)} \mathcal{E}_0(z_{S_t}) \rangle = \prod_{t \in T(P)} \langle  \mathcal{E}_0(z_{S_t}) \rangle$. 
Furthermore, when $|T(P)| \geq 2$, the operators from the start of the algorithm contributing to $\mathcal{E}_0(z_{S_t})$ for different~$t \in T(P)$ do not interact with each other at all. That is, a commutator term involving operators eventually contributing to~$\mathcal{E}_0(z_{S_t})$ for different~$t$ is never taken.   Therefore, for any integer $n \geq 1$, the contribution to $h_{\mu,\nu}^{s}$ by covers with at least $n$ connected components is given exactly by the contribution to Equation~\eqref{formulaHurwitz} of commutation patterns~$P$ for which $|T(P)| \geq n$.

That means that the exclusion-inclusion formula for the connected Hurwitz number in terms of ordinary Hurwitz numbers coincides precisely with the exclusion-inclusion formula for the contribution to Equation~\eqref{formulaHurwitz} by commutation patterns~$P$ with $|T(P)| = 1$  in terms of contributions by arbitrary commutation paterns. This completes the proof. 
\end{proof}


\subsection{Example: one-part double Hurwitz numbers} \label{Sec:OnePart}
In the case of one-part double Hurwitz numbers, that is, $l(\nu)=1$, we have just one commutation pattern. Therefore, applying Equation~\eqref{formulaHurwitz} (or, equivalently, Equation~\eqref{eq:connected}), we obtain
\begin{equation}
h_{\mu,|\mu|}^s=
\frac{1}{|\mu| \prod_{i=1}^{l(\mu)} \mu_i}[z_1^{r+1} \cdots z_s^{r+1}] \frac{\prod_{k=1}^s \zeta(|\mu|z_k) \prod_{i=1}^{l(\mu)} \zeta(\mu_i z_{[s]})}{\zeta(z_{[s]})}
\end{equation}

\subsection{Example: coefficients of the completed cycles} \label{Sec:CoefCompCycles}

The coefficients of the completed cycles are obtained from Theorem~\ref{connectedTheorem} in the case $s=1$, $\nu = (1, \dots, 1)$. In that case there is only one connected commutation pattern, and we get
\begin{equation}
|C_\mu| h^{1,\circ}_{\mu, (1,\dots, 1)} =
\frac{1}{|\mu|!} [z^{r+1}] \zeta(z)^{|\mu|-1} \prod_{i=1}^{l(\mu)} \zeta(\mu_iz) ,
\end{equation}
(recall that $|C_\mu|$ is the size of the conjugacy class~$C_\mu$). This formula agrees with the one given in~\cite[Equation (0.22)]{OkoPan06} modulo the diffences in conventions and notation.

\section{The cut-and-join operators}\label{sectionCutAndJoin}

\subsection{Three vector spaces}
Let $p_1, p_2, \dots$ be an infinite sequence of formal variables. Given a permutation $\sigma \in S_n$ with cycle lengths $k_1, \dots, k_s$ denote by $p(\sigma)$ the product $p_{k_1} \cdots p_{k_s}$. If we assign the weight $k$ to the variable $p_k$, then the monomial $p(\sigma)$ is of total weight~$n$. The map $p$ is extended by linearity to an isomorphism 
\begin{equation}
p \colon Z \C S_n \mapsto \C_n[p_1, \dots, p_n],
\end{equation}
where $Z \C S_n$ is the center of the group algebra of the symmetric group $S_n$ and $\C_n[p_1, \dots, p_n]$ is the space of quasi-homogeneous polynomials of weight~$n$. This is an isomorphism of vector spaces, but not of algebras, since the target space of~$p$ does not have a natural algebra structure. However the action of elements of $Z \C S_n$ by multiplication gives rise to interesting operators in the space of homogeneous polynomials. In particular, the map $p$ transforms the operator of multiplication by the sum of all transpositions into the well-known cut-and-join operator:
\begin{equation}
\frac12\sum_{i,j \geq 1} \left(ij p_{i+j} \frac{\d^2}{\d p_i \d p_j} + 
(i+j) p_i p_j \frac{\d}{\d p_{i+j}} \right).
\end{equation}
The first term corresponds to the case where two cycles of lenghts $i$ and $j$ are merged together by the transposition; the second term corresponds to the case where a cycle of length $i+j$ is cut into two cycles of lengths $i$ and~$j$.

The operator corresponding to the mutiplication by the sum of all $(r+1)$-cycles was determined for every~$r$ by Goulden and Jackson in~\cite{GouJac}. Their expressions seem to be more complicated than those for the completed cycles.

The space $\C[[p_1, p_2, \dots]]$ of formal power series in variables $p_1, p_2, \dots$ is a completion of the direct sum of the spaces of quasi-homogeneous polynomials. It is natually isomorphic to a completion of the direct sum of spaces~$Z \C S_n$. Moreover, both vector spaces are naturally identified with the infinite wedge space via the isomorphism $v_\lambda \leftrightarrow s_\lambda(p)$, where $s_\lambda$ is the Schur polynomial:
\begin{equation}
s_\lambda(p) = \frac1{n!} \sum_{\sigma \in S_n} \chi_\lambda(\sigma) p(\sigma).
\end{equation}
Under this identification, the multiplication by the completed $(r+1)$-cycle corresponds to the operator 
\begin{equation}
F_{r+1} = \frac1{(r+1)!} \sum_{m \in \Z + 1/2} m^{r+1} E_{m,m}
\end{equation}
in the infinite wegde space. Our goal is to construct the corresponding cut-and-join operator in the space $\C[[p_1, p_2 \dots, ]]$.

\subsection{The construction of operators}
For $k \geq 1$, let $a_{-k} = p_k$ be the operator of multiplication by $p_k$ and $a_k = k \, \partial / \partial p_k$. We let $a_0=0$. In the infinite wedge space the operator $a_k$ becomes $\alpha_k=\tilde\E_k(0) = \sum_{m \in \Z +1/2} E_{m-k,m}$ (see Notation~\ref{Not:alphaF}). For $k<0$ it transforms $v_\lambda$ into $\sum \epsilon(\mu) v_\mu$, where the Young diagrams $\mu$ are all diagrams that can be obtained from $\lambda$ by adding a ribbon of length~$k$ and the sign $\epsilon(\mu)$ is the number of horizontal steps in the ribbon, as in the Murnaghan-Nakayama rule~\cite{Jam78}. Similarly, for $k>0$ it transforms $v_\lambda$ into $\sum \epsilon(\mu) v_\mu$, where the Young removing a ribbon of length~$k$ and the sign $\epsilon(\mu)$ is the number of horizontal steps in the ribbon.

The \emph{normal ordering} ${:}a_{k_1} \cdots a_{k_n}{:}$ of a monomial $a_{k_1} \cdots a_{k_n}$ is the non-decreasing order of indices; in other words the derivations go to the right and the multiplication operators to the left. Recall (Notation~\ref{notationZeta}) that $\zeta(z) = e^{z/2} - e^{-z/2}$.

\begin{definition}
The coefficients $Q_1, Q_2, \dots$ of the series
\begin{equation}
Q_1 z +Q_2 z^2 + \dots = \frac1{\zeta(z)} \sum_{n \geq 1} \frac1{n!} \sum_{k_1 +\dots + k_n =0}
\zeta(k_1 z) \cdots \zeta(k_n z)
\; \frac{{:}a_{k_1} \cdots a_{k_n}{:}}{k_1 \cdots k_n} 
\end{equation}
are called the \emph{completed cut-and-join operators}.
\end{definition}

\begin{theorem}\label{Thm:cutandjoin1}
The map $p: Z \C S_n \to \C_n[p_1, \dots, p_n]$ transforms the operator of multiplication by the completed $(r+1)$-cycle into the $(r+1)$st completed cut-and-join operator.
\end{theorem}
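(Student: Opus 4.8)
The plan is to transport the statement into the infinite wedge space, where multiplication by the completed $(r+1)$-cycle is \emph{already} identified with the diagonal operator $F_{r+1}$, and then to reduce everything to a single bosonization identity. First I would invoke the three identifications of this section. Under $s_\lambda(p)\leftrightarrow v_\lambda$, multiplication by the completed $(r+1)$-cycle is the operator $F_{r+1}=\frac1{(r+1)!}\sum_m m^{r+1}E_{m,m}$, and each $a_k$ becomes $\alpha_k=\tE_k(0)$. Since $Q_{r+1}$ is a fixed universal expression in the $a_k$, its image is the identical expression in the $\alpha_k$; thus the theorem is equivalent to the operator identity $Q_{r+1}=F_{r+1}$ on the infinite wedge. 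Finally $F_{r+1}=[z^{r+1}]\tE_0(z)$ by Notation~\ref{Not:alphaF} and the definition of $\tE_0$, so it is enough to match the series defining the $Q_n$ with $\tE_0(z)$ in every positive degree in $z$.

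Second, I would repackage the defining series of $Q_1,Q_2,\dots$. Introducing a bookkeeping variable $w$ to realize the constraint $k_1+\dots+k_n=0$ as the extraction $[w^0]$, the generating series $\sum_n Q_n z^n$ equals $\frac1{\zeta(z)}[w^0]V(z,w)$, where $V(z,w)={:}\exp\bigl(\sum_{k\neq0}\frac{\zeta(kz)}{k}\,w^k\alpha_k\bigr){:}$ is a normally ordered vertex operator (the missing $n=0$ and $n=1$ terms account exactly for the scalar difference between $\mathcal{E}_0$ and $\tE_0$ and for $\alpha_0=0$). The goal thereby becomes the bosonization identity $\tE_0(z)=\frac1{\zeta(z)}[w^0]V(z,w)$.

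Third, I would prove this by the principle that an operator on Fock space is determined by its commutators with the $\alpha_a$ together with its vacuum expectation. Writing $\mathcal{B}_n(z):=\frac1{\zeta(z)}[w^n]V(z,w)$, a short computation of $[\alpha_a,V]$ (commute $\alpha_a$ through the normal-ordered exponential using the Heisenberg relation and $\zeta(-kz)=-\zeta(kz)$) gives $[\alpha_a,V]=\zeta(az)w^{-a}V$, hence $[\alpha_a,\mathcal{B}_n]=\zeta(az)\mathcal{B}_{a+n}$ for all $a\neq0$. This is exactly the relation satisfied by the $\mathcal{E}_n$: by Lemma~\ref{commutationRelation} together with $\alpha_a=\mathcal{E}_a(0)$ one has $[\alpha_a,\mathcal{E}_n(z)]=\zeta(az)\mathcal{E}_{a+n}(z)$. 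One then checks the single scalar $\langle 0|\mathcal{B}_n|0\rangle=\delta_{n,0}/\zeta(z)=\langle 0|\mathcal{E}_n|0\rangle$. Any matrix element $\langle 0|\alpha_\mu\,X\,\alpha_{-\lambda}|0\rangle$ can be reduced, by commuting the $\alpha$'s through $X$ via these relations, to a combination of such vacuum scalars; hence $\mathcal{B}_n=\mathcal{E}_n$ for every $n$, and the case $n=0$ yields the desired bosonization and therefore $Q_{r+1}=F_{r+1}$.

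The hard part is this last step. The commutation relation with a creation operator $\alpha_{-k}$ sends $\mathcal{B}_0$ to $\mathcal{B}_{-k}$, so the identity cannot be isolated at $n=0$ and must be established for the whole tower $\{\mathcal{E}_n\}_{n\in\Z}$ simultaneously. The genuine content is bookkeeping of the combinatorial constants --- the $1/n!$, the weights $\zeta(k_iz)/k_i$, and the prefactor $1/\zeta(z)$ --- so that the cut-and-join sum, the vertex operator $V$, and the commutator computation all produce the same normalization; the reduction steps in the first two paragraphs are purely formal.
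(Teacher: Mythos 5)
Your proposal is correct, but it takes a genuinely different route from the paper. The paper proves the equivalent Theorem~\ref{Thm:cutandjoin2} (the cut-and-join PDE for $H_{r+1}$): after the conjugation formula of Lemma~\ref{Lem1}, the real work is Lemma~\ref{Lem3}, which identifies $\langle\tE_K(z)\exp(\sum p_k\alpha_k/k)\,X\rangle$ with $O_K(z)$ applied to $\langle\exp(\sum p_k\alpha_k/k)\,X\rangle$, and this is proved by symmetric-group representation theory: restriction to $S_K\times S_{N-K}$, the evaluation $s_\mu(\zeta(z),\zeta(2z),\dots)$, which vanishes unless $\mu$ is a hook (Lemma~\ref{Lem4}), and the Murnaghan--Nakayama rule. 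You instead reduce the theorem to the operator identity $\sum_n Q_nz^n=\tE_0(z)$ on the Fock space and prove it as a bosonization identity: the family $\mathcal{B}_n=\frac1{\zeta(z)}[w^n]V(z,w)$ satisfies the same Heisenberg covariance $[\alpha_a,\mathcal{B}_n]=\zeta(az)\mathcal{B}_{n+a}$ and the same vacuum expectations $\delta_{n,0}/\zeta(z)$ as the family $\E_n(z)$, and these data determine the family, since any matrix element $\langle 0|\alpha_{\mu}X\alpha_{-\lambda}|0\rangle$ reduces by induction (commuting $\alpha$'s through $X$ and using the Heisenberg relations) to vacuum scalars, and the vectors $\prod_i\alpha_{-\lambda_i}|0\rangle$ span the Fock space. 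Your insistence on treating the whole tower $\{\E_n\}_{n\in\Z}$ at once is exactly the right fix for the fact that commuting with $\alpha_{-k}$ leaves the $n=0$ slot. What each approach buys: the paper's argument delivers the PDE $\d H_{r+1}/\d\beta=Q_{r+1}H_{r+1}$ directly, in the form used later in the paper, along with the explicit hook/Schur computations; yours is shorter, avoids symmetric-function theory entirely, and yields the stronger statement $\E_n(z)=\frac1{\zeta(z)}[w^n]\,{:}\exp\bigl(\sum_{k\neq0}\zeta(kz)w^k\alpha_k/k\bigr){:}$ for every $n$, from which the PDE follows by one differentiation of Equation~\eqref{Eq:H}. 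One bookkeeping slip should be fixed: since the $n=0$ term of $V$ contributes $1$, you have $\frac1{\zeta(z)}[w^0]V=\frac1{\zeta(z)}+\sum_nQ_nz^n$, so the intermediate goal must read $\E_0(z)=\frac1{\zeta(z)}[w^0]V(z,w)$ (the deregularized operator), not $\tE_0(z)$; your third paragraph, which proves $\mathcal{B}_n=\E_n$, already has this right, and subtracting $1/\zeta(z)$ from both sides then gives $\sum_nQ_nz^n=\tE_0(z)$, hence $Q_{r+1}=[z^{r+1}]\tE_0(z)=F_{r+1}$ as required.
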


For instance, we have
\begin{align*}
Q_1 &= \sum_{i \geq 1} i p_i \frac{\d}{\d p_i},\\
Q_2 &= \frac12\sum_{i,j \geq 1} \left(ij p_{i+j} \frac{\d^2}{\d p_i \d p_j} + 
(i+j) p_i p_j \frac{\d}{\d p_{i+j}} \right),\\
Q_3 &= \frac16 \sum_{i,j,k \geq 1} \left(ijk p_{i+j+k} 
\frac{\d^3}{\d p_i \d p_j \d p_k} + 
(i+j+k) p_i p_j p_k \frac{\d}{\d p_{i+j+k}} \right) \\ 
&+ \frac14 \sum_{i+j=k+l} ij p_k p_l \frac{\d^2}{\d p_i \d p_j}
+ \frac1{24} \sum_{i\geq 1} (2i^3-i) p_i \frac{\d}{\d p_i}.
\end{align*}

The multiplication of $\sigma \in S_n$ by the completed cycle $\overline{(1)} = (1)$ corresponds to picking an element of~$\sigma$, which just mutiplies the permutation by~$n$. Hence the operator $Q_1$ multiplies a homogeneous polynomial by its total weight. 

The operator $Q_2$ is the standard cut-and-join operator.

The operator $Q_3$ is the more complicated cut-and-join operator whose action corresponds to multiplying a permutation by the completed 3-cycle. Let us briefly explain how its terms are related to the expression of the completed 3-cycle $\frac12 (3) +\frac12 (1,1) + \frac1{24} (1)$.

The last term $(1)$ is the operator of picking a sheet of the ramified covering or an element of the permutation. So $\frac1{24} (1)$ corresponds to 
\begin{equation}
\frac1{24} \sum i p_i \frac{\d}{\d p_i}.
\end{equation}
Here $i$ is the length of the cycle of the permutation that contains the picked element.

The second term $(1,1)$ is the operator of picking two sheets of the covering or two elements of the permutation. So $\frac12 (1,1)$ corresponds to
\begin{equation}
\frac14 \sum_{i,j \geq 1} ij p_i p_j \frac{\d^2}{\d p_i \d p_j}
+ \frac14 \sum_{i \geq 1} i(i-1) p_i \frac{\d}{\d p_i}.
\end{equation}
The first sum describes the case when the chosen elements belong to two different cycles of lengths $i$ and~$j$, while the second sum describes the case where they lie in the same cycle of length~$i$.

Finally, the term $(3)$ is the operator of multiplication by a 3-cycle. It's action is more complicated. If the elements of the 3-cycle belong to three different cycles of the permutations of lenghts $i,j,k$, these cycles are merged into one. This gives us the term
\begin{equation}
\frac16 \sum_{i,j,k \geq 1} ijk p_{i+j+k} \frac{\d^3}{\d p_i \d p_j \d p_k}.
\end{equation}
If one element lies in one cycle and two other elements lie in another cycle, then a piece of the second cycle is cut off and attached to the first one. But there is a subtlety: there are two ways to go from cycles of lengths, say, 2 and 19 to cycles of lengths 9 and 12: one can either take a 7-elements piece of the 19-cycle and attach it to the 2-cycle, or one can take a 10-elements piece. On the other hand, to go from cycles of lengths 2 and 19 to cycles of lengths 2 and 19 again there is only one way: one should take a 17-element piece from the 19-cycle and attach it to the 2-cycle. As a result, we get the following sum:
\begin{equation}
\frac14 \sum_{
\substack{i+j=k+l\\
\{i,j\} \not= \{k,l\}}
} ij p_k p_l \frac{\d^2}{\d p_i \d p_j}
+
\frac14 ij p_i p_j \frac{\d^2}{\d p_i \d p_j}.
\end{equation}
Finally, all three elements of the 3-cycle can lie in the same cycle of the permutation. If the cycles ``turn'' in two opposite ways, the cycle of the permutation is split into three parts and we get the term
\begin{equation}
\frac16 \sum_{i,j,k \geq 1} (i+j+k) p_i p_j p_k \frac{\d}{\d p_{i+j+k}} .
\end{equation}
If both cycles ``turn'' in the same direction, then the cycle of the permutation remains in one piece, though the order of elements changes. This corresponds to the operator
\begin{equation}
\frac1{12} \sum_{i \geq 1} i(i-1)(i-2) p_i \frac{\d}{\d p_i}.
\end{equation}
The reader can check that if we add all these terms we recover the operator~$Q_3$.

\subsection{The generating series for Hurwitz numbers}
Introduce the following generating series for the disconnected Hurwitz numbers with completed cycles:
\begin{align} 
& H_{r+1}(\beta, p_1, p_2, \dots, q_1, q_2, \dots)  = \\ \notag &
\sum_{n,m,s} \; \sum_{
\substack{\mu_1, \dots, \mu_m\\ \nu_1, \dots, \nu_n}
}
h^{(r+1)}_{g,\mu,\nu} \; \frac{\beta^s}{s!} \;
\frac{p_{\mu_1} \cdots p_{\mu_m}}{m!} \;
\frac{q_{\nu_1} \cdots q_{\nu_n}}{n!}.
\end{align}
Here, as before, $s = (2g-2+m+n)/r$ and, by convention, the summands with $\sum \mu_i \not= \sum \nu_i$ are set to~$0$.

\begin{theorem}\label{Thm:cutandjoin2}
The series $H_{r+1}$ satisfies the partial differential equation
\begin{equation}
\frac{\partial H_{r+1}}{\partial \beta} = Q_{r+1} H_{r+1}.
\end{equation}
\end{theorem}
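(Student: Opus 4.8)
The plan is to diagonalize the entire generating series in the Schur basis and thereby reduce the statement to the eigenvalue property already packaged in Theorem~\ref{Thm:cutandjoin1}. I would start from the character formula~\eqref{eq:char-formula} for $h^{(r+1)}_{g,\mu,\nu}$ and substitute it directly into the definition of $H_{r+1}$. Since $s=(2g-2+m+n)/r$ is determined by $\mu$ and $\nu$, the only free summation index coupling to $\beta$ is $s$, and the factor $\big(p_{r+1}(\lambda)/(r+1)!\big)$ occurs to the power $s$. Summing $\sum_s \frac{\beta^s}{s!}\big(p_{r+1}(\lambda)/(r+1)!\big)^s$ collapses to $\exp\!\big(\beta\,p_{r+1}(\lambda)/(r+1)!\big)$, where $p_{r+1}$ is the shifted power sum of Definition~\ref{powerSum}. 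After interchanging the (formal) summations, this leaves two decoupled character sums, one in the $p$-variables and one in the $q$-variables.

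The key step is to recognize each of these character sums as a Schur function. Using the formula $s_\lambda(p)=\frac1{n!}\sum_{\sigma\in S_n}\chi_\lambda(\sigma)\,p(\sigma)$ recalled above, and grouping permutations by cycle type, one gets $s_\lambda(p)=\sum_{\mu\vdash n}\chi_\mu^\lambda\,p_\mu/z_\mu$ with $z_\mu=\prod_i\mu_i\prod_k m_k!$, where $m_k$ is the number of parts of $\mu$ equal to $k$. On the other hand, the inner sum extracted from $H_{r+1}$ runs over ordered tuples $(\mu_1,\dots,\mu_m)$ with weight $\chi_\mu^\lambda\,p_\mu/(\prod_i\mu_i\cdot m!)$; since a partition $\mu$ of length $m$ arises from exactly $m!/\prod_k m_k!$ ordered tuples, the two normalizations match and the inner sum is precisely $s_\lambda(p)$ (and likewise $s_\lambda(q)$). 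This yields the closed form
\[ H_{r+1}(\beta,p,q)=\sum_\lambda s_\lambda(p)\,\exp\!\Big(\beta\,\tfrac{p_{r+1}(\lambda)}{(r+1)!}\Big)\,s_\lambda(q). \]

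From here the differential equation is immediate. Differentiating in $\beta$ brings down a factor $p_{r+1}(\lambda)/(r+1)!$ in front of each summand. By Theorem~\ref{Thm:cutandjoin1}, together with the identification $v_\lambda\leftrightarrow s_\lambda(p)$, the operator $Q_{r+1}$ acting in the $p$-variables is the image of multiplication by the completed $(r+1)$-cycle, and the latter acts on the irreducible component $\lambda$ by the central character $p_{r+1}(\lambda)/(r+1)!$ (exactly the eigenvalue of $\mathcal F_{r+1}$ used in the proof of Proposition~\ref{VacuumFormCompletedHurwitz}). Hence $Q_{r+1}s_\lambda(p)=\frac{p_{r+1}(\lambda)}{(r+1)!}s_\lambda(p)$, so the $\beta$-derivative equals $\sum_\lambda \big(Q_{r+1}s_\lambda(p)\big)\exp(\cdots)\,s_\lambda(q)$. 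Because $Q_{r+1}$ differentiates and multiplies only in the $p$-variables, it commutes past the scalar exponential and past the $s_\lambda(q)$ factors and can be pulled out of the sum, giving $\partial_\beta H_{r+1}=Q_{r+1}H_{r+1}$.

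I expect the only real work to lie in the bookkeeping of the second paragraph: matching the ordered-tuple normalization $1/(m!\prod_i\mu_i)$ with the partition normalization $1/z_\mu$, checking that the convention $\sum\mu_i\neq\sum\nu_i\Rightarrow 0$ is compatible with the diagonal expansion (it is automatic, since the product $s_\lambda(p)\,s_\lambda(q)$ forces $|\lambda|$ to be read off consistently on both factors), and justifying the interchange of the summations over $s$, $\mu$, $\nu$ and $\lambda$ as an identity of formal power series. The genuinely nontrivial input, that the completed $(r+1)$-cycle operator equals $Q_{r+1}$, is supplied by Theorem~\ref{Thm:cutandjoin1}, so conditional on that result the present theorem is essentially a formal consequence of the Schur diagonalization.
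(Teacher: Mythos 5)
Your diagonalization is correct as far as it goes: substituting the character formula~\eqref{eq:char-formula} into the definition of $H_{r+1}$, summing over $s$ to produce the exponential, and matching the ordered-tuple normalization $1/(m!\prod_i\mu_i)$ with $1/z_\mu$ does give
\begin{equation*}
H_{r+1}(\beta,p,q)=\sum_\lambda s_\lambda(p)\,\exp\!\left(\beta\,\frac{p_{r+1}(\lambda)}{(r+1)!}\right) s_\lambda(q),
\end{equation*}
and from the eigenvalue identity $Q_{r+1}\,s_\lambda(p)=\frac{p_{r+1}(\lambda)}{(r+1)!}\,s_\lambda(p)$ the differential equation follows at once. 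The problem is where you get that eigenvalue identity. You invoke Theorem~\ref{Thm:cutandjoin1}, but in this paper Theorem~\ref{Thm:cutandjoin1} is never proved independently: the paper states that the two theorems are equivalent formulations of each other and then proves Theorem~\ref{Thm:cutandjoin2} \emph{first}, deducing Theorem~\ref{Thm:cutandjoin1} from it (``we are going to prove Theorem~\ref{Thm:cutandjoin2} and hence the equivalent Theorem~\ref{Thm:cutandjoin1}''). So, within the paper's logical structure, your argument is circular: what you have actually proved is the implication from Theorem~\ref{Thm:cutandjoin1} to Theorem~\ref{Thm:cutandjoin2} --- essentially the equivalence that the paper asserts without proof --- but not the nontrivial content that makes either theorem true.

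That missing content is the identification of the diagonal operator $F_{r+1}=\frac1{(r+1)!}\sum_m m^{r+1}E_{m,m}$, whose eigenvalue $p_{r+1}(\lambda)/(r+1)!$ on $v_\lambda$ is the easy part (it is already used in Proposition~\ref{VacuumFormCompletedHurwitz}), with the \emph{explicit} differential operator $Q_{r+1}$ in the variables $p_k$. The paper establishes this by writing $\d H_{r+1}/\d\beta$ as a vacuum expectation with an insertion of $\tE_0(z)$, conjugating $\tE_0(z)$ past $\exp\left(\sum_{k\geq 1} p_k\alpha_k/k\right)$ via the commutation relations (Lemma~\ref{Lem1}), rewriting $\sum_r Q_{r+1}z^{r+1}$ in a matching form (Lemma~\ref{Lem2}), and then proving the key identity $\left<\tE_K(z)\exp\left(\sum_{k\geq 1} p_k\alpha_k/k\right)X\right>=O_K(z)\left<\exp\left(\sum_{k\geq 1} p_k\alpha_k/k\right)X\right>$ (Lemma~\ref{Lem3}), whose proof in turn needs the evaluation of Schur polynomials at $p_k=\zeta(kz)$, supported on hook diagrams (Lemma~\ref{Lem4}). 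To make your approach into a genuine proof you would need an independent proof of Theorem~\ref{Thm:cutandjoin1}, i.e., of the eigenvalue identity $Q_{r+1}s_\lambda=\frac{p_{r+1}(\lambda)}{(r+1)!}s_\lambda$ for the operator $Q_{r+1}$ as defined by its explicit formula; that is a real computation, comparable to the paper's Lemmas~\ref{Lem1}--\ref{Lem4}, and nothing in your proposal replaces it.
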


This theorem is actually an equivalent formulation of Theorem~\ref{Thm:cutandjoin1}.

\subsection{Proofs}
Now we are going to prove Theorem~\ref{Thm:cutandjoin2} and hence the equivalent Theorem~\ref{Thm:cutandjoin1}.

According to Proposition~\ref{VacuumFormCompletedHurwitz}, we have
\begin{equation} \label{Eq:H}
H_{r+1} = \left< \exp\left(\sum_{k \geq 1} p_k \alpha_k/k\right) \exp(\beta F_{r+1}) \exp\left(\sum_{k \geq 1} q_k \alpha_{-k}/k\right) \right>.
\end{equation}
Hence,
\begin{equation} \label{Eq:dhdbeta}
\frac{\d H_{r+1}}{\d \beta} = \left<\exp\left(\sum_{k \geq 1} p_k \alpha_k/k\right) F_{r+1} \exp(\beta F_{r+1}) \exp\left(\sum_{k \geq 1} q_k \alpha_{-k}k/k\right) \right>.
\end{equation}

We will prove several lemmas that will allow us to simplify the above expression and to relate it to the cut-and-join operator $Q_{r+1}$. First of all recall (Lemma~\ref{commutationRelation}) that 
\begin{equation}
[\tE_a(z), \tE_b(w)] = \zeta(aw-bz)\tE_{a+b}(z+w)
\end{equation}
and (Notation~\ref{Not:alphaF}) that
\begin{equation}
\alpha_k = \tE_k(0),  \qquad  \tE_0(z) = \sum F_{r+1} z^{r+1}.
\end{equation}

\begin{lemma} \label{Lem1}
We have
\begin{align}
& \exp\left(\sum_{k \geq 1} p_k \alpha_k/k\right) \tE_0(z) \exp\left(-\sum_{k \geq 1} p_k \alpha_k/k\right) 
\\ \notag &
= \tE_0(z) + \frac1{1!} \sum_{i=1}^\infty \zeta(iz) \frac{p_i}{i} \tE_i(z)  +\frac{1}{2!}\sum_{i,j=1}^\infty \zeta(iz)\zeta(jz)\frac{p_i}{i}\frac{p_j}{j} \tE_{i+j}(z)
\\ \notag &
+ \frac{1}{3!}\sum_{i,j,k=1}^\infty \zeta(iz)\zeta(jz)\zeta(kz)\frac{p_i}{i}\frac{p_j}{j}\frac{p_k}{k}\tE_{i+j+k}(z) + \dots
\end{align}
\end{lemma}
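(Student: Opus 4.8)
The claim is a conjugation formula for $\tE_0(z)$ by the exponential of $\sum_k p_k \alpha_k/k$. The natural tool is the standard operator identity $e^A B e^{-A} = \sum_{n\geq 0} \tfrac{1}{n!} (\operatorname{ad}_A)^n B$, where $\operatorname{ad}_A(B) = [A,B]$. Here $A = \sum_{k\geq 1} p_k \alpha_k/k = \sum_{k\geq 1} p_k \tE_k(0)/k$ and $B = \tE_0(z)$. So the first step is to write the right-hand side as a sum over $n$ of nested commutators $\tfrac{1}{n!}[A,[A,\dots,[A,\tE_0(z)]\dots]]$, and to match the $n$-th such term against the $n$-th line of the claimed formula.

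The key computational input is the commutation relation $[\tE_a(z), \tE_b(w)] = \zeta(aw - bz)\,\tE_{a+b}(z+w)$, recalled just above the lemma (it is the $\tE$-version of Lemma~\ref{commutationRelation} via Remark~\ref{regularizedCommutation}). First I would compute a single commutator: since each $\tE_k(0)$ has second variable $w=0$, we get
$$
[\,p_i \tE_i(0)/i,\ \tE_b(z)\,] = \frac{p_i}{i}\,\zeta(i z - b\cdot 0)\,\tE_{i+b}(z) = \zeta(iz)\,\frac{p_i}{i}\,\tE_{i+b}(z),
$$
so that one application of $\operatorname{ad}_A$ sends $\tE_b(z)$ to $\sum_{i\geq 1}\zeta(iz)\frac{p_i}{i}\tE_{i+b}(z)$. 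I would emphasize two simplifying features that make the argument clean. First, the result is always a multiple of some $\tE_c(z)$ with the \emph{same} second argument $z$ (because the $w=0$ slots add nothing), so iterating stays within operators of the form $\tE_{c}(z)$ and the nested commutators never generate $w$-dependence; this keeps $\zeta$ of a product of $z$'s from appearing. Second, the factor $\zeta(iz)$ depends only on the index $i$ of the $\alpha$-operator being commuted in, not on the current energy $b$, so the $n$-fold iteration simply produces a product $\zeta(i_1 z)\cdots\zeta(i_n z)$. Thus by induction the $n$-fold nested commutator $(\operatorname{ad}_A)^n \tE_0(z)$ equals
$$
\sum_{i_1,\dots,i_n \geq 1} \zeta(i_1 z)\cdots\zeta(i_n z)\,\frac{p_{i_1}}{i_1}\cdots\frac{p_{i_n}}{i_n}\,\tE_{i_1+\cdots+i_n}(z),
$$
and dividing by $n!$ reproduces exactly the $(n{+}1)$-st line of the stated formula.

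The induction step is where I would be most careful, and it is the one genuine obstacle: applying $\operatorname{ad}_A$ to the inductive expression means commuting $A$ (a sum over a new index $i_{n+1}$) past a sum of terms $\tE_{i_1+\cdots+i_n}(z)$, and one must check that the factor picked up is precisely $\zeta(i_{n+1} z)$ independently of the accumulated energy $i_1+\cdots+i_n$. This is exactly the content of the single-commutator computation above with $b = i_1+\cdots+i_n$, since $\zeta(i_{n+1} z - b\cdot 0) = \zeta(i_{n+1} z)$; the $p_k \alpha_k$ operators mutually commute (as $[\tE_a(0),\tE_c(0)] = \zeta(0)\tE_{a+c}(0) = 0$), so ordering issues do not arise and the symmetric sum over $(i_1,\dots,i_{n+1})$ is well defined. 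A minor point worth a remark is convergence/well-definedness: each coefficient of a fixed monomial in the $p_k$ involves only finitely many terms, and for fixed $z$-degree only finitely many $n$ contribute, so the formal manipulations are justified term by term. Assembling the $n$-sum then yields the displayed identity.
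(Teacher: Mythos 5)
Your proof is correct and follows essentially the same route as the paper's: the paper also invokes the identity $e^x u e^{-x} = \exp(\operatorname{ad}_x)(u)$ with $u = \tE_0(z)$, $x = \sum_{k\geq 1} p_k\alpha_k/k$, and then applies the commutation relation $[\tE_a(z),\tE_b(w)] = \zeta(aw-bz)\tE_{a+b}(z+w)$. Your write-up simply makes explicit the details (the single-commutator computation, the energy-independence of the factor $\zeta(iz)$, and the induction) that the paper leaves to the reader.
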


\begin{proof}
The map $u \mapsto e^x u e^{-x}$ is the exponent of the map $u \mapsto [x,u]$. Substituing $u = \tE_0(z)$, $x = \sum_{k \geq 1} p_k \alpha_k/k$ and using the commutation relations for the operators $\tE$ we obtain the above formula.
\end{proof}

\begin{lemma} \label{Lem2}
Let
\begin{equation}
O_K(z)=\sum_{n=1}^\infty \frac1{n!} \sum_{k_1+\cdots+k_n=K} \frac{\zeta(k_1z)\cdots\zeta(k_nz)}{\zeta(z)} \frac{\d}{\d p_{k_1}}\cdots \frac{\d}{\d p_{k_n}}.
\end{equation}
Then we have
\begin{equation}
\sum_{r \geq 0} Q_{r+1} z^{r+1} 
= \sum_{n=1}^\infty \frac{1}{n!} \sum_{k_1,\dots,k_n>0} \zeta(k_1z)\cdots\zeta(k_nz)\frac{p_{k_1}}{k_1}\cdots \frac{p_{k_n}}{k_n} O_{k_1+\dots+k_n}(z).
\end{equation}
\end{lemma}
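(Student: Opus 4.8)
The plan is to prove the identity by expanding the defining series for $\sum_{r \geq 0} Q_{r+1} z^{r+1}$ and reorganizing the sum combinatorially until it factors into the shape of the right-hand side. First I would examine the normally ordered monomial ${:}a_{k_1} \cdots a_{k_n}{:}$ in the sum over $n$-tuples $(k_1, \dots, k_n)$ with $k_1 + \dots + k_n = 0$. Since $a_0 = 0$, only tuples with all $k_i \neq 0$ survive; each such tuple splits into a block of positive indices (giving derivations $a_{k_i} = k_i\,\partial/\partial p_{k_i}$) and a block of negative indices (giving multiplications $a_{k_i} = p_{-k_i}$), and the normal ordering places every multiplication to the left of every derivation. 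Thus the monomial is literally a product of a multiplication block and a derivation block.

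The key observation is a sign cancellation. Because $\zeta$ is odd, $\zeta(-lz) = -\zeta(lz)$, a negative index $k_i = -l$ (with $l > 0$) contributes $\zeta(k_i z)\, a_{k_i}/k_i = \zeta(-lz)\, p_l/(-l) = \zeta(lz)\, p_l/l$, so the sign from the odd function and the sign from $1/k_i < 0$ cancel. A positive index $k_i = m$ contributes $\zeta(mz)\, a_m/m = \zeta(mz)\,\partial/\partial p_m$. Hence, after summing $\prod_i \zeta(k_i z)\, a_{k_i}/k_i$ over a given sign pattern, every factor carries a positive sign and the contribution is $\prod_{\mathrm{neg}} \zeta(lz)\, p_l/l \cdot \prod_{\mathrm{pos}} \zeta(mz)\, \partial/\partial p_m$.

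Next I would handle the combinatorial factor. Writing $p$ for the number of positive indices and $q$ for the number of negative indices, with $p+q=n$ and $p,q \geq 1$ (both blocks are nonempty since the $k_i$ sum to zero), the number of ways to choose which of the $n$ positions are positive is $\binom{n}{p} = n!/(p!\,q!)$, which turns the overall $1/n!$ into $1/(p!\,q!)$; here I use that both the $\zeta$-products and the normally ordered operator depend only on the multisets of values, not on the positions. The constraint $k_1 + \dots + k_n = 0$ becomes $\sum_{\mathrm{pos}} m_i = \sum_{\mathrm{neg}} l_j =: K$. Grouping the double sum by the common value $K$ then factors it as a multiplication part $\sum_q \frac{1}{q!}\sum_{\sum l_j = K}\prod_j \zeta(l_j z)\, p_{l_j}/l_j$ times a derivation part $\sum_p \frac{1}{p!}\sum_{\sum m_i = K}\prod_i \zeta(m_i z)\,\partial/\partial p_{m_i}$. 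Absorbing the global prefactor $1/\zeta(z)$ into the derivation part turns it into exactly $O_K(z)$, and the remaining sum over $q$ and the $l_j$ is precisely the outer sum on the right-hand side.

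The main obstacle is bookkeeping rather than anything conceptual: one must check that the normal ordering separates the monomial cleanly into a multiplication block on the left and a derivation block on the right, so that multiplying $O_K(z)$ on the left by $\prod_j p_{l_j}/l_j$ yields an honest differential operator equal to the normally ordered expression, and that the sign cancellation together with the $\binom{n}{p}$ counting is applied consistently across all terms. Once these are in place, the two sides agree term by term in each power of $z$.
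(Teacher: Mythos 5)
Your proof is correct, and it is essentially the paper's own argument: the paper dismisses this lemma with the single line ``This is obtained by a simple computation,'' and your write-up supplies exactly that computation. The three ingredients you isolate --- the sign cancellation from the oddness of $\zeta$ against the factor $1/k_i$, the clean split of the normally ordered monomial into a multiplication block and a derivation block, and the $\binom{n}{p}$ count converting $1/n!$ into $1/(p!\,q!)$ so the sum factors through $O_K(z)$ --- are precisely the bookkeeping the paper leaves implicit.
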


\begin{proof}
This is obtained by a simple computation.
\end{proof}

\begin{lemma} \label{Lem3}
Let $X$ be any operator in the infinite wedge space independent of $p_1$, $p_2$, \dots. Then
\begin{equation}
\left<\tE_K(z) \exp\left(\sum_{k \geq 1} p_k \alpha_k/k\right) X \right>
= O_K(z) \left<\exp\left(\sum_{k \geq 1} p_k \alpha_k/k\right) X \right>.
\end{equation}
\end{lemma}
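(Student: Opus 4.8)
The plan is to strip off the bra $\langle 0|$ and the operator $X$, reducing the statement to a pure identity of covectors in the infinite wedge space, and then to establish that identity by a coherent-state argument.

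\emph{Step 1: reduction to a covector identity.} Since $X$ does not depend on the variables $p_k$, the differential operator $O_K(z)$ on the right-hand side acts only on $A:=\exp(\sum_{k\geq1}p_k\alpha_k/k)$. The operators $\alpha_k$, $k\geq1$, mutually commute (indeed $[\alpha_k,\alpha_l]=[\tE_k(0),\tE_l(0)]=\zeta(0)\tE_{k+l}(0)=0$), so $\partial A/\partial p_k=(\alpha_k/k)A$ and, iterating, $\partial_{p_{k_1}}\cdots\partial_{p_{k_n}}A=(k_1\cdots k_n)^{-1}\alpha_{k_1}\cdots\alpha_{k_n}A$. Pulling the $p$-derivatives out of the correlator gives $O_K(z)\langle AX\rangle=\langle CAX\rangle$, where
\[
C:=\sum_{n\geq1}\frac1{n!}\sum_{\substack{k_1+\cdots+k_n=K\\ k_i\geq1}}\frac{\zeta(k_1z)\cdots\zeta(k_nz)}{\zeta(z)}\,\frac{\alpha_{k_1}\cdots\alpha_{k_n}}{k_1\cdots k_n}.
\]
It therefore suffices to prove the covector identity $\langle 0|\tE_K(z)=\langle 0|C$; applying both sides to the vector $AX|0\rangle$ then yields the lemma. (Here $K\geq1$, so $\tE_K=\E_K$.)

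\emph{Step 2: passing to the vertex-operator form.} Taking adjoints, and using $\tE_n(z)^*=\tE_{-n}(z)$, $\alpha_k^*=\alpha_{-k}$ together with $[\alpha_{-k},\alpha_{-l}]=0$, the covector identity is equivalent to the vector identity $\E_{-K}(z)|0\rangle=\zeta(z)^{-1}[w^K]\exp\big(\sum_{k\geq1}\tfrac{\zeta(kz)}{k}w^k\alpha_{-k}\big)|0\rangle$ for $K\geq0$, the case $K=0$ being $\E_0(z)|0\rangle=\zeta(z)^{-1}|0\rangle$. I package all values of $K$ into the single generating identity
\[
|\Psi\rangle:=\sum_{K\geq0}w^K\,\E_{-K}(z)|0\rangle \;=\; \frac1{\zeta(z)}\exp\!\Big(\sum_{k\geq1}\tfrac{\zeta(kz)}{k}\,w^k\alpha_{-k}\Big)|0\rangle=:|R\rangle .
\]

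\emph{Step 3: the coherent-state argument.} I will show that both $|\Psi\rangle$ and $|R\rangle$ are joint eigenvectors of every annihilation operator $\alpha_m$, $m\geq1$, with the same eigenvalue $\zeta(mz)w^m$. For $|R\rangle$ this is immediate from the Heisenberg relation $[\alpha_m,\alpha_{-k}]=m\,\delta_{mk}$ (the scalar value of $[\E_m(0),\E_{-m}(0)]$, see the remark following Theorem~\ref{disconnectedTheorem}): the bracket $[\alpha_m,\sum_k\tfrac{\zeta(kz)}{k}w^k\alpha_{-k}]=\zeta(mz)w^m$ is central, so $\alpha_m e^{(\cdots)}|0\rangle=\zeta(mz)w^m e^{(\cdots)}|0\rangle$. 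For $|\Psi\rangle$ I use $\alpha_m|0\rangle=0$ together with the deregularized commutator $[\alpha_m,\E_{-K}(z)]=\zeta(mz)\E_{m-K}(z)$ (the relevant special case of Lemma~\ref{commutationRelation}); this gives $\alpha_m|\Psi\rangle=\zeta(mz)\sum_{K\geq0}w^K\E_{m-K}(z)|0\rangle$, and after reindexing and discarding the positive-energy summands $\E_n(z)|0\rangle=0$, $n>0$, one recovers $\zeta(mz)w^m|\Psi\rangle$. A joint eigenvector of all the $\alpha_m$ is unique up to a scalar, and the scalars agree because $\langle 0|\Psi\rangle=\langle\E_0(z)\rangle=\zeta(z)^{-1}=\langle 0|R\rangle$ (only the charge-zero term of $|\Psi\rangle$ survives, and $\langle 0|\alpha_{-k}=0$). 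Hence $|\Psi\rangle=|R\rangle$, closing the chain back to Step 1.

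The main obstacle is the consistent bookkeeping of the regularization term $\delta_{n,0}/\zeta(z)$ that distinguishes $\E_0$ from $\tE_0$ in Step 3. It is precisely the deregularized $\E_0(z)$, with $\E_0(z)|0\rangle=\zeta(z)^{-1}|0\rangle$, that must appear as the $K=0$ term of $|\Psi\rangle$ so that the reindexing in the eigenvalue computation closes up when $m=K$, and the central value $m$ of $[\E_m(0),\E_{-m}(0)]$ enters the computation for $|R\rangle$ through the same mechanism. Once this is handled coherently, everything else is a routine manipulation of formal power series in $z$, $w$ and the $p_k$.
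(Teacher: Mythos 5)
Your proof is correct, and it takes a genuinely different route from the paper's. The paper argues representation-theoretically: it reduces to the case $X(v_\emptyset)=v_\lambda$, identifies $\langle \exp(\sum_k p_k\alpha_k/k)\,v_\lambda\rangle$ with the Schur polynomial $s_\lambda$, interprets $O_K(z)$ as a sum over permutations with a set of distinguished cycles of total length $K$, restricts $\lambda$ to $S_K\times S_{N-K}$ to get $\frac1{\zeta(z)}\sum_{\mu\subset\lambda}s_\mu(\zeta(z),\zeta(2z),\dots)\,s_{\lambda\setminus\mu}(p)$, and then matches this against the action of $\tE_K(z)$ on hook vectors; this needs the separate hook-evaluation Lemma~\ref{Lem4} and the Murnaghan--Nakayama rule. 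You instead strip off the bra and $X$, dualize to the vector identity $\sum_{K\geq0}w^K\E_{-K}(z)|0\rangle=\zeta(z)^{-1}\exp\bigl(\sum_{k\geq1}\zeta(kz)w^k\alpha_{-k}/k\bigr)|0\rangle$, and prove it by a coherent-state uniqueness argument in the Heisenberg algebra: both sides are joint eigenvectors of every $\alpha_m$, $m\geq1$, with eigenvalue $\zeta(mz)w^m$, and their pairings with $\langle0|$ agree. All the ingredients you invoke (the commutation relation $[\E_a(z),\E_b(w)]=\zeta(aw-bz)\E_{a+b}(z+w)$, the scalar value $m$ of $[\alpha_m,\alpha_{-m}]$, vanishing of positive-energy operators on $|0\rangle$ and of negative-energy operators on $\langle0|$) are established in the paper, and the one place where the argument could genuinely break --- the bookkeeping of the regularization term $\delta_{n,0}/\zeta(z)$ --- is handled correctly: in your reindexing the term $K=m$ produces exactly the deregularized $\E_0(z)|0\rangle=\zeta(z)^{-1}|0\rangle$, which is the $K'=0$ term of the generating vector, so the eigenvalue equation closes up. What your approach buys is brevity and self-containedness within the infinite-wedge formalism, with no symmetric-function combinatorics at all; what the paper's approach buys is the explicit hook formula of Lemma~\ref{Lem4} (of independent interest) and a combinatorial reading of $O_K(z)$ in terms of distinguished cycles, which fits the cut-and-join picture of Section~\ref{sectionCutAndJoin}. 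One cosmetic correction: the term of your $|\Psi\rangle$ that survives the pairing with $\langle0|$ is the zero-\emph{energy} ($K=0$) term, not the ``charge-zero'' term --- every term of $|\Psi\rangle$ has charge zero.
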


\begin{proof}
It is enough to prove the lemma under the assumption that $X(v_\emptyset) = v_\lambda$. The general case is obtained by taking a linear combination of operators $X$ like that. Let $|\lambda| = N$. 

We are going to evaluate the right-hand side of the equality and simplify it finally obtaining the left-hand side. Note that the vacuum expectation value $\langle \exp\left(\sum_{k \geq 1} p_k \alpha_k/k\right) v_\lambda \rangle$ is equal to the Schur polynomial $s_\lambda$ which is conveniently written as 
\begin{equation}
\frac1{N!} p \left(\sum_{\sigma \in S_N} \chi_\lambda(\sigma) \sigma \right).
\end{equation}
The action of the operator $O_K(z)$ has a natural interpretation in terms of permutations: the operator picks (in all possible ways) a set of cycles of $\sigma$ with total length~$K$. These cycles will be called distinguished. A distinguished cycle of length $k$ is assigned a factor of $\zeta(kz)$. A non-distinguished cycle of length~$k$ is assigned a factor $p_k$ as before. To $\sigma$ is assigned the product of these factors. And the result of the action of $O_K(z)$ is the sum of the contributions of all permutations $\sigma \in S_N$ divided by $N!$ and by $\zeta(z)$.

The summation over all permutations $\sigma \in S_N$ with a set of distinguished cycles of total length~$K$ can be replaced by a summation over the permutations whose distinguished cycles cover the elements from $1$ to~$K$. A permutation like that actually lies in $S_K \times S_{N-K}$. The contribution of permutations like that is $N!/K!(N-K)!$ times smaller than the contribution of all permutations, so the new sum should be divided by $K! (N-K)!$ instead of $N!$.

We can decompose the representation $\lambda$ of $S_N$ into a direct sum of representations of $S_K \times S_{N-K}$ as follows:
\begin{equation}
\bigoplus_{\mu \subset \lambda} \mu \otimes (\lambda \setminus \mu).
\end{equation}
Here $\mu$ denotes an irreducible representation of $S_K$ corresponding to a Young diagram included in $\lambda$ and $\lambda \setminus \mu$ is the (possibly reducible) representation $\mbox{Hom}_{S_K} (\mu, \lambda)$ of $S_{N-K}$. Using this decomposition we obtain:
\begin{equation}
{\rm RHS} = \frac1{\zeta(z)} \sum_{\mu \subset \lambda}
s_\mu(\zeta(z), \zeta(2z), \dots) \cdot s_{\lambda \setminus \mu}(p_1, p_2, \dots).
\end{equation}

This expression can be further simplified using the following lemma.

A partition is called a {\em hook} if it has the form $\mu_{a,b} =(a \, 1^b)$ for $a,b \geq 0$. This name is due to the shape of the corresponding Young diagram.

\begin{lemma} \label{Lem4}
We have
\begin{equation}
\frac1{\zeta(z)} s_\mu(\zeta(z), \zeta(2z), \dots) = (-1)^b e^{(a-b-1)z/2}
\end{equation}
if $\mu = \mu_{a,b}$ and 
$s_\mu(\zeta(z), \zeta(2z), \dots) = 0$ otherwise.
\end{lemma}

We will prove this lemma later; at present we continue to simplify the right-hand side of the equality of Lemma~\ref{Lem3}. Using Lemma~\ref{Lem4} we get
\begin{equation}
{\rm RHS} = \sum_{a+b=K}
(-1)^b e^{(a-b-1)z/2} s_{\lambda \setminus \mu}(p_1, p_2, \dots).
\end{equation}

Now let us explain why this coincides with the left-hand part. The vector $\tE_K(z)(\mu)$ has a nonzero $v_\emptyset$ component only when $\mu$ is a hook partition. We have $\tE_K(z) (\mu_{a,b}) = (-1)^b e^{(a-b-1)z/2} v_\emptyset$.
We also need to recall (see, for instance,~\cite{VerOko}) that the Schur polynomial of the representation 
$\lambda \setminus \mu$ of $S_{N-K}$ is obtained by the Murnaghan-Nakayama rule, that is, it is equal to 
\begin{equation}
s_{\lambda \setminus \mu}(p_1,p_2, \dots) = \left< v_\mu \exp \left(\sum_{k \geq 1} \alpha_k p_k/k \right) v_\lambda
\right>.
\end{equation}
Therefore we have
\begin{align}
{\rm LHS} & = \sum_\mu \langle v_\emptyset \tE_K(z) v_\mu \rangle
\left< v_\mu \exp \left(\sum_{k \geq 1} \alpha_k p_k/k \right) v_\lambda
\right>
\\ \notag
& = \sum_{a+b=K}
(-1)^b e^{(a-b-1)z/2} s_{\lambda \setminus \mu_{a,b}}(p_1, p_2, \dots).
\end{align}
\end{proof}

Now we prove Lemma~\ref{Lem4}. 

\begin{proof}
We use the well-known identity for Schur polynomials
\begin{equation}
p_k s_\mu = \sum \pm s_\lambda,
\end{equation}
where the sum is taken over the Young diagrams $\lambda$ obtained by adding a ribbon of length $k$ to $\mu$ and the sign $\pm$ is the parity of the number of downward steps in the ribbon. (This is a reformulation of the action of the operator $\alpha_{-k}$.) First let us check that this formula is compatible with the claim of the theorem.

If $\mu$ is not a hook, then neither of the $\lambda$'s will be a hook. So after the substitution $p_k = \zeta(kz)$ we get the correct equality
\begin{equation}
\zeta(kz) \cdot 0 = \sum \pm 0. 
\end{equation}

If $\mu = \mu_{a,b}$ then there are exactly two ways to add a $k$-ribbon to $\mu$ in such a way that it remains a hook: we can increase either $a$ or $b$ by~$k$. In the first case the sign of the ribbon is $+1$, in the second case it is $(-1)^{k-1}$. Thus we get the correct equality
\begin{equation}
\zeta(kz) \cdot  (-1)^b e^{(a-b-1)z/2} = 
 (-1)^b e^{(a+k-b-1)z/2} + (-1)^{k-1} (-1)^{b+k} e^{(a-b-k-1)z/2}.
\end{equation}

Now, every Schur polynomial can be obtained as a linear combination of the form
\begin{equation}
\sum_i c_i p_{k_i} s_{\lambda_i}.
\end{equation}
For instance, we can use the formula
\begin{equation}
s_\mu = \frac1{|\mu|} \sum p_k \frac{k \; \partial s_\mu}{\partial p_k}
\end{equation}
and decompose every $k \,  \partial s_\mu/\partial p_k$ into a linear combination of Schur polynomials. These expressions allow us to find the Schur polynomials before or after the substitution $p_k = \zeta(kz)$ by induction on the degree. The equality
\begin{equation}
\frac1{\zeta(z)} s_1(\zeta(z), \zeta(2z), \dots) = \frac{\zeta(z)}{\zeta(z)} = 1
= (-1)^0 e^{1-0-1}
\end{equation}
provides the base of induction. Since we know these inductive relations are compatible with the formula given in the lemma, we conclude that the lemma is true.
\end{proof}

Finally, Theorem~\ref{Thm:cutandjoin2} follows immediately from Lemmas~\ref{Lem1}, \ref{Lem2},~\ref{Lem3}. Indeed, in the expression~\ref{Eq:dhdbeta} for $\d H_{r+1}/\d \beta$, the operator $F_{r+1}$ is the coefficient of $z^{r+1}$ in $\tE_0(z)$. Using Lemma~\ref{Lem1} we obtain that $\d H_{r+1}/\d \beta$ is the coefficient of $z^{r+1}$ in
\begin{align}
& \sum_{n \geq 0} \frac1{n!} \sum_{k_1, \dots, k_n} \prod_{i=1}^n 
\frac{\zeta(k_iz) p_{k_i}}{k_i}
\\ \notag &
\times \left< \tE_{\sum k_i}(z) \exp\left(\sum_{k \geq 1} p_k \alpha_k /k\right) \exp(\beta F_{r+1}) \exp\left(\sum_{k \geq 1} q_k \alpha_{-k}/k\right) \right>.
\end{align}
According to Lemma~\ref{Lem3} this is equal to
\begin{align}
&
\sum_{n \geq 0} \frac1{n!} \sum_{k_1, \dots, k_n} \prod_{i=1}^n 
\frac{\zeta(k_iz) p_{k_i}}{k_i} O_{\sum k_i} (z)
\\ \notag & 
\times 
\left<\exp\left(\sum_{k \geq 1} p_k \alpha_k /k\right) \exp(\beta F_{r+1}) \exp\left(\sum_{k \geq 1} q_k \alpha_{-k}/k\right) \right>
\\ \notag &
= \sum Q_{r+1} z^{r+1} H_{r+1},
\end{align}
where in the last equality we have used Lemma~\ref{Lem2} for the expression of $Q_{r+1}$ and Equation~\eqref{Eq:H} for $H_{r+1}$. Extracting the coefficient of $z^{r+1}$, we get 
\begin{equation}
\frac{\d H_{r+1}}{\d \beta} = Q_{r+1} H_{r+1}
\end{equation}
as claimed.




\section{Strong piecewise polynomiality}\label{sectionStrongPiecewisePolynomiality}

In this section, we prove an analogue of strong piecewise polynomiality for Hurwitz numbers with completed $(r+1)$-cycles and derive the wall crossing formulas for this piecewise polynomial. It is a generalization of Johnson's results in~\cite{Joh10}. 

\subsection{Notation}

Throughout this section we fix two positive integers $m$ and $n$. Let $V$ be the subset of $(\Z_{\geq 0})^m \oplus (\Z_{\geq 0})^n$ defined by
\begin{equation}
V:= \left\{(x_1,\dots,x_n,y_1,\dots,y_m) \,\left|\, \sum_{i=1}^n x_i = \sum_{j=1}^m y_j \right.\right\}
\end{equation}
We consider double Hurwitz number with $s$ completed $(r+1)$-cycles as a function $h^s\colon V\to \Q$ such that $h^s(\mu,\nu)=h^s_{\mu,\nu}$

\begin{definition}
Let $I \subset [m]$ and $J \subset [n]$ be any non-empty proper subsets. Then the hyperplane 
\begin{equation}
\{(x,y) \in V\ | \ x_I - y_J = 0 \} \subset V 
\end{equation}
is called the hyperplane given by $I$ and~$J$ and denoted~$W_{I,J}$. 
\end{definition}

\begin{remark}\label{hyperplaneArrangement}
Consider $(\mu, \nu) \in V$ such that it doen't lie on any of the hyperplanes~$W_{I,J}$. Then $h^s_{\mu,\nu}=h^{s,\circ}_{\mu,\nu}$, since there are no covers of $\CP1$ with ramification over $0$ and~$\infty$ given by $\mu$ and~$\nu$ with more than one connected component. Thus, if we interpret the $W_{I,J}$ as the walls of a hyperplane arrangement, then at the internal points of the chambers of this arrangement the disconnected and connected Hurwitz numbers are equal. 
\end{remark}

\subsection{Polynomiality in a chamber}

\begin{theorem}\label{PiecewisePolynomial}
The function~$h^s\colon V \to \Q$ is a piecewise polynomial function with the walls given by the hyperplanes~$W_{I,J}$. 
\end{theorem}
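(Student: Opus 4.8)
The plan is to prove piecewise polynomiality directly from the explicit formula for the disconnected Hurwitz number in Theorem~\ref{disconnectedTheorem}, namely
\begin{equation*}
h_{\mu,\nu}^{s} = \frac{1}{\prod_i \mu_i \prod_j \nu_j}[z_1^{r+1}\cdots z_s^{r+1}]
\sum_{P \in CP_{\mu,\nu}^{s}}\prod_{t \in T(P)} \frac{1}{\zeta(z_{S_t})} \prod_{l\in L(P)}\mZeta {P_I^l}{P_J^l}{P_K^l}{P_M^l}{P_N^l}{P_R^l}.
\end{equation*}
The essential point is that, at an interior point of a fixed chamber, the combinatorial structure of the computation is rigid: the set of commutation patterns $CP_{\mu,\nu}^{s}$ does not change as $(\mu,\nu)$ varies within the chamber. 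First I would establish this rigidity. The only place where the algorithm's branching can depend on the \emph{values} of $\mu$ and $\nu$ (rather than merely on their indexing) is through the \emph{sign} of the energy $\mu_I - \nu_J$ of each intermediate $\E$-operator, since that sign determines which operator is the leftmost of negative energy and hence whether a given commutator is forced to vanish. But the quantity $\mu_I - \nu_J$ changes sign precisely across the wall $W_{I,J}$; away from all walls its sign is locally constant. Thus within a single chamber the same set of commutation patterns $P$ contributes, with the same index sets $L(P)$, $T(P)$, and the same assignments of the six-tuples of subsets.

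Granting this, the claim reduces to showing that for each \emph{fixed} commutation pattern $P$ the corresponding summand is a polynomial in $(\mu,\nu)$ on the chamber. Here I would use that every $\zeta$-factor appearing has the form $\zeta(\ell \cdot z_{S})$ or $\zeta$ of a determinant $\det\left({\begin{smallmatrix} \mu_I - \nu_J & z_K \\ \mu_M - \nu_N & z_L \end{smallmatrix}}\right) = (\mu_I-\nu_J)z_L - (\mu_M-\nu_N)z_K$, so that the argument of each $\zeta$ is \emph{linear} in the $z_k$ with coefficients that are integer-linear combinations of the $\mu_i$ and $\nu_j$. Since $\zeta(w) = e^{w/2}-e^{-w/2} = \sum_{a\geq 0} w^{2a+1}/(2^{2a}(2a+1)!)$ is an odd power series in $w$ beginning with $w$, each numerator $\zeta$-factor is a power series in the $z_k$ whose coefficients are polynomials in $(\mu,\nu)$, and each denominator $1/\zeta(z_{S_t})$ expands, after factoring out the leading $z_{S_t}$, into a power series in the $z_k$ with \emph{rational constant} coefficients (independent of $\mu,\nu$). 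Extracting the coefficient of $z_1^{r+1}\cdots z_s^{r+1}$ is a finite linear operation on these power series, so the bracketed quantity is a polynomial in $(\mu,\nu)$.

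Finally I would account for the overall prefactor $1/(\prod_i\mu_i\prod_j\nu_j)$ and the denominators coming from $1/\zeta(z_{S_t})$. The $1/\zeta(z_{S_t}) = 1/\big(z_{S_t} + O(z_{S_t}^3)\big)$ contributes a factor $1/z_{S_t}$ times a power series with constant coefficients, which only affects which coefficient we extract, not the polynomiality of the coefficients in $(\mu,\nu)$; the $1/\prod \mu_i$ and $1/\prod\nu_j$ are explicit, but I expect the structure of the $\zeta$-numerators (each $\zeta(\mu_i z_{\cdots})$ carries a factor $\mu_i$, and similarly $\nu_j$) to cancel these denominators, leaving an honest polynomial — this bookkeeping is routine but must be checked. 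The main obstacle is therefore \emph{not} any analytic estimate but the combinatorial claim that $CP_{\mu,\nu}^{s}$ is constant on each chamber: one must argue carefully that the decision made at each step of the algorithm (which operator has negative energy, and whether the result vanishes) depends on $(\mu,\nu)$ only through the signs of the linear forms $\mu_I-\nu_J$, all of which are constant on a chamber by definition of the walls $W_{I,J}$. This is exactly the strategy of Johnson~\cite{Joh10}, adapted to the completed-cycle operators $\tilde\E$, and it is here that I would spend the most care.
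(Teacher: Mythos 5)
Your overall architecture is the paper's: the crux you identify --- that the algorithm's branching depends on $(\mu,\nu)$ only through the signs of the linear forms $\mu_I-\nu_J$, so that $CP_{\mu,\nu}^{s}$ is constant on each chamber --- is exactly the paper's first step, and your remark that each factor $\zeta(\mu_i z_L)$ carries a factor $\mu_i$ is exactly how the paper cancels the prefactor $1/\prod_i\mu_i\prod_j\nu_j$. However, there is a genuine gap in your treatment of the denominators $1/\zeta(z_{S_t})$. You assert that $1/\zeta(z_{S_t})$ contributes ``a factor $1/z_{S_t}$ times a power series with constant coefficients, which only affects which coefficient we extract.'' That is true only when $|S_t|=1$. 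In the relevant case $z_{S_t}=z_{k_1}+\cdots+z_{k_m}$ with $m\geq 2$ (e.g.\ the connected case $S_t=[s]$, $s\geq 2$), the factor $1/z_{S_t}$ is \emph{not} a formal power series in $z_1,\dots,z_s$, and dividing by it is not a shift of coefficient extraction: the coefficient of $z_1^{r+1}\cdots z_s^{r+1}$ in $F/(z_1+\cdots+z_s)$ is not even defined unless one first proves that $F$ is divisible by $z_1+\cdots+z_s$ in the power series ring. Without that divisibility, expanding $1/(z_1+\cdots+z_s)$ (say as a Laurent series in $z_1$) produces infinitely many contributions to a fixed coefficient and an answer that a priori depends on the chosen expansion. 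So this is not routine bookkeeping; it is the one analytic point of the proof, and your argument as written does not supply it.

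The missing step is the following cancellation, which is how the paper proceeds. In any commutation pattern, the last commutator that creates the final zero-energy operator $\E_0(z_{S_t})$ of a cluster has the form $[\E(I,J,K),\E(M,N,L)]$ with $K\cup L=S_t$ and $\mu_{I\cup M}-\nu_{J\cup N}=0$; hence $\mu_M-\nu_N=-(\mu_I-\nu_J)$, and its $\zeta$-factor collapses to
\[
\zeta\bigl((\mu_I-\nu_J)z_L-(\mu_M-\nu_N)z_K\bigr)=\zeta\bigl(n\, z_{S_t}\bigr),
\qquad n:=\mu_I-\nu_J\in\Z\setminus\{0\}
\]
($n\neq 0$, since otherwise the factor is $\zeta(0)=0$ and the pattern would not contribute). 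Since $\zeta(nw)/\zeta(w)$ is holomorphic at $w=0$ with coefficients polynomial in $n$ (for $n>0$ it equals $\sum_{j=0}^{n-1}e^{(n-1-2j)w/2}$), each denominator $1/\zeta(z_{S_t})$ is cancelled by a numerator factor attached to the same linear form $z_{S_t}$, and only then is each summand an honest power series in $z_1,\dots,z_s$ with coefficients polynomial in $(\mu,\nu)$, so that the coefficient extraction is legitimate. With this inserted, your argument closes. Note also that the paper streamlines the bookkeeping by invoking Remark~\ref{hyperplaneArrangement}: inside a chamber no splitting $\mu_{I_t}=\nu_{J_t}$ can occur, so every contributing pattern has $|T(P)|=1$ and one may work with Theorem~\ref{connectedTheorem}, leaving a single denominator $1/\zeta(z_{[s]})$ to cancel rather than one per cluster.
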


\begin{proof} The proof is analogous to the one in~\cite{Joh10} of piecewise polynomiality for ordinary double Hurwitz numbers. Let $\mathfrak{c}$ be some chamber of the hyperplane arrangement mentioned in Remark~\ref{hyperplaneArrangement}. We have to prove that $h^s|_\mathfrak{c}$ is polynomial. 

The crucial point is that the set of commutation patterns $CP(\mu,\nu)$ does not depend on $(\mu,\nu) \in \mathfrak{c}$, but only on the chamber~$\mathfrak{c}$ itself. Note that for any subsets $I \subset [m]$ and $J \subset [n]$ the sign of the number $\mu_I - \nu_J$ is determined by the chamber~$\mathfrak{c}$ containing~$(\mu, \nu)$, and it is these signs which determine the set of commutation patterns for~$(\mu,\nu)$. Thus, $CP(\mu,\nu)$ depends only on the chamber~$\mathfrak{c}$ containing~$(\mu, \nu)$. From now on, we will denote it by~$CP(\mathfrak{c})$. 

Furthermore, since we are in a chamber of the hyperplane arrangement, by Remark~\ref{hyperplaneArrangement}, the connected and disconnected Hurwitz numbers are equal, so $|T(P)| = 1$ for any commutation pattern in~$CP(\mathfrak{c})$. Thus, the Hurwitz number is determined by Theorem~\ref{connectedTheorem} instead of Theorem~\ref{disconnectedTheorem}. Let us prove that the factor
\begin{equation}
\frac{1}{\zeta(z_{[s]})} \sum_{P \in CP(\mathfrak{c})} \prod_{l\in L(P)}\mZeta
 {P_I^l} {P_J^l}{P_K^l}{P_M^l}{P_N^l}{P_R^l}
\end{equation}
in Equation~\eqref{eq:connected} restricted to~$\mathfrak{c}$ is a power series in $z_1, \ldots z_s$ with coefficients depending polynomially on~$(\mu,\nu)$. Indeed, the only problem is the factor $1/\zeta(z_{[s]})$. Meanwhile, in any commutation pattern in~$CP(\mathfrak{c})$ the last commutator taken will produce a factor $\zeta(nz_{[s]})$ for some integer~$n$. Since $\zeta(nz)/\zeta(z)$ is holomorphic at $z=0$, the whole expression is indeed a power series in $z_1, \dots, z_s$. Clearly, the coefficients depend polynomially on $(\mu,\nu)$. 

Therefore, the coefficient of $z_1^{r+1}\cdots z_s^{r+1}$ will depend polynomially on $\mu$ and~$\nu$ and it only remains to show that it contains a factor $\prod_{i=1}^{l(\mu)} \mu_i \prod_{j=1}^{l(\nu)} \nu_j$. Indeed, in the vacuum expectation value at the start of the algorithm (equation~\ref{algorithmVacuumExpectation}) we have the operator $\mathcal{E}(\{i\},\emptyset,\emptyset)$ for all $i=1,\dots,l(\mu)$. In any commutation pattern $P \in CP(\mathfrak{c})$ it will eventually be commuted with some operator, which will provide a factor $\zeta(\mu_i z_L)$ for a certain subset $L \subset [s]$ that is divisible by $\mu_i$. The same argument also works for $\nu_j$, for all $j=1,\dots,l(\nu)$, and we see from this argument that the product $\prod_{i=1}^{l(\mu)} \mu_i \prod_{j=1}^{l(\nu)} \nu_j$ also devides the product of $\zeta$-functions on the right side of Equation~\eqref{eq:connected}. Thus, $h_g$ is polynomial on any chamber $\mathfrak{c}$, which proves the theorem. 
\end{proof}

\subsection{The structure of the polynomial}

\begin{theorem}
Let $\mathfrak{c}$ be a chamber of the hyperplane arrangement of Remark~\ref{hyperplaneArrangement}. Then $h^s|_{\mathfrak{c}}$ has the following form:
\begin{equation}
h^{s}|_\mathfrak{c}(\mu,\nu) = \sum_{k=0}^g (-1)^k P_{\mathfrak{c},k}^{s}(\mu,\nu) ,
\end{equation}
where $P_{\mathfrak{c},k}^{s}\colon V \to \Q$ is a homogeneous polynomial of degree $(r+1)s + 1 - l(\mu) - l(\nu) - 2k$ with $P_{\mathfrak{c},k}^{s}(\mu,\nu) > 0$ for all $(\mu,\nu) \in \mathfrak{c}$, and $g=(rs - l(\mu) - l(\nu) + 2)/2$ is the genus of the covering. 
\end{theorem}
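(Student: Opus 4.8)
The plan is to extract the homogeneous components of $h^{s}|_{\mathfrak c}$ directly from the connected formula of Theorem~\ref{connectedTheorem}, which holds verbatim on $\mathfrak c$ since $|T(P)|=1$ for every $P\in CP(\mathfrak c)$. Denote by $w_l=\det\left(\begin{smallmatrix}\mu_{P_I^l}-\nu_{P_J^l}&z_{P_K^l}\\ \mu_{P_M^l}-\nu_{P_N^l}&z_{P_R^l}\end{smallmatrix}\right)$ the argument of the $l$-th $\zeta$-factor; it is bilinear, being a product of a linear form in $(\mu,\nu)$ and a linear form in the variables $z_k$. I would read off the homogeneous parts by the rescaling $(\mu,\nu)\mapsto(t\mu,t\nu)$, $z\mapsto y/t$: since $w_l(t\mu,t\nu;y/t)=w_l(\mu,\nu;y)$ and $[z_1^{r+1}\cdots z_s^{r+1}]=t^{(r+1)s}[y_1^{r+1}\cdots y_s^{r+1}]$, the number $h^{s}(t\mu,t\nu)$ becomes an explicit expansion in $t$ whose coefficients are, by definition, the $(-1)^k P^s_{\mathfrak c,k}$.

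To carry this out cleanly I would first absorb into the denominator the last commutator of each pattern, which always produces a factor $\zeta(c_P z_{[s]})$ with $c_P=\mu_{P_I}-\nu_{P_J}>0$ on $\mathfrak c$, because the final operator of a connected pattern is $\E_0(z_{[s]})$. After the rescaling the denominator becomes $1/\zeta(y_{[s]}/t)=\sum_{i\ge 0}c_i\,t^{1-2i}y_{[s]}^{2i-1}$, so $h^s(t\mu,t\nu)$ equals $t^{(r+1)s-l(\mu)-l(\nu)}$ times a series in $t^{-2}$, and the coefficient of $t^{d_k}$ with $d_k=(r+1)s+1-l(\mu)-l(\nu)-2k$ is $c_k/\bigl(\prod_i\mu_i\prod_j\nu_j\bigr)$ times $[y_1^{r+1}\cdots y_s^{r+1}]\sum_P y_{[s]}^{2k-1}\zeta(c_Py_{[s]})\prod_l\zeta(w_l)$, the last product being over all commutators except the one absorbed. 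Homogeneity of each $P^s_{\mathfrak c,k}$ of degree $d_k$ is then automatic, and the oddness of $\zeta$ together with bilinearity forces the degrees into a single progression of step $2$. For the range of $k$: a connected pattern has $|L(P)|=l(\mu)+l(\nu)+s-1$ commutators, each $\zeta$-factor contributes $y$-degree at least $1$, so the minimal $y$-degree of the bracketed expression is $2k+l(\mu)+l(\nu)+s-2$; this can reach $(r+1)s$ only when $2k\le rs-l(\mu)-l(\nu)+2=2g$, giving $0\le k\le g$.

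The sign structure is the heart of the matter. First, every $w_l$ is non-negative on $\overline{\mathfrak c}\times(\mathbb R_{\ge0})^s$: in the algorithm a commutator is taken only between the leftmost negative-energy operator and its immediate left neighbour, which has non-negative energy, so in the determinant the first row has $\mu_{P_I^l}-\nu_{P_J^l}\ge 0$ and the second has $\mu_{P_M^l}-\nu_{P_N^l}<0$, and expanding gives $w_l=(\mu_{P_I^l}-\nu_{P_J^l})z_{P_R^l}+(\nu_{P_N^l}-\mu_{P_M^l})z_{P_K^l}\ge 0$. Hence for fixed $(\mu,\nu)\in\mathfrak c$ each $\zeta(w_l)$, and also $\zeta(c_Py_{[s]})$, is a power series in the $y_k$ with non-negative coefficients, and $y_{[s]}^{2k-1}\zeta(c_P y_{[s]})$ has non-negative coefficients for every $k\ge 0$ (at $k=0$ the pole is cancelled by the linear vanishing of $\zeta(c_Py_{[s]})$). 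Second, $c_k:=[z^{2k-1}]\,\zeta(z)^{-1}$ has sign $(-1)^k$, as one sees from the Bernoulli expansion of $(2\sinh(z/2))^{-1}$. Therefore the coefficient of $t^{d_k}$ is $c_k$ times a coefficient extracted from a non-negative series; setting $P^s_{\mathfrak c,k}:=|c_k|\,[y_1^{r+1}\cdots y_s^{r+1}]\sum_P y_{[s]}^{2k-1}\zeta(c_Py_{[s]})\prod_l\zeta(w_l)\big/\bigl(\prod_i\mu_i\prod_j\nu_j\bigr)$ yields $h^s|_{\mathfrak c}=\sum_{k=0}^g(-1)^kP^s_{\mathfrak c,k}$ with $P^s_{\mathfrak c,k}\ge 0$. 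Strict positivity holds because $y_1^{r+1}\cdots y_s^{r+1}$ occurs with a positive coefficient, most transparently at $k=g$, where the minimal-degree term $c_Py_{[s]}^{2g}\prod_l w_l$ already realises it.

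The main obstacle is precisely this double sign bookkeeping: one must control simultaneously the pole and the alternating coefficients of $1/\zeta(z_{[s]})$ and the non-negativity of the determinantal arguments $w_l$, the latter resting on the exact commutation discipline of the algorithm of Section~\ref{sec:alg} (leftmost negative operator against a non-negative neighbour). Once these two facts are in place, every remaining step is manipulation of power series with non-negative coefficients, from which both the homogeneity of the $P^s_{\mathfrak c,k}$ and their positivity follow. The only residual point needing care is strict positivity for $0\le k<g$, which I would settle by exhibiting, for a single pattern, an explicit choice of terms in the $\zeta$-expansions realising the monomial $y_1^{r+1}\cdots y_s^{r+1}$.
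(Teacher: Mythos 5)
Your proposal is correct and takes essentially the same route as the paper's own proof: restricting to the connected formula of Theorem~\ref{connectedTheorem} on the chamber, using bilinearity of the determinantal $\zeta$-arguments for the degree bookkeeping, counting the $l(\mu)+l(\nu)+s-1$ commutators to get the range $0\le k\le g$, and playing the alternating expansion of $1/\zeta(z_{[s]})$ against the non-negativity of the commutator $\zeta$-factors on $\mathfrak{c}$ to get the sign structure. Your rescaling trick and the absorption of the final factor $\zeta(c_P z_{[s]})$ are cosmetic repackagings of the paper's direct degree count, your observation that the left operator in a commutator may have zero energy (giving $w_l\ge 0$ rather than the paper's claimed strictly positive coefficients) is if anything slightly more careful than the original, and, like the paper, you leave strict positivity of the coefficient of $z_1^{r+1}\cdots z_s^{r+1}$ at the level of a concluding sketch.
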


\begin{proof} Since we know that $h^{s}|_\mathfrak{c}$ is a polynomial, we only need to prove that it is either even or odd, that it is of degree $(r+1)s + 1 - l(\mu) - l(\nu)$, that the lowest occuring term is of degree $2g$ less than the highest, and the alternating nature of the homogeneous terms. It is clear that $h^{s}|_\mathfrak{c}$ is either even or odd from Theorem~\ref{connectedTheorem} and the fact that $\zeta(z)$ is an odd function. 

Let $P \in CP(\mathfrak{c})$ be a commutation pattern. Then for any $l \in L(P)$ we will always have that either $P_K^l$ or $P_R^l$ is non-empty (it is easy to see the vacuum expectation will be zero otherwise, contradicting that $P$ is a commutation pattern). Therefore 
$\mZeta{P_I^l}{P_J^l}{P_K^l}{P_M^l}{P_N^l}{P_R^l}$ has equal total order in $(\mu,\nu)$ and~$(z_1, \dots z_s)$. So the highest total order in $(\mu,\nu)$ in~$h^{s}|_{\mathfrak{c}}$ occurs when we take the lowest possible total order in~$(z_1,\dots,z_s)$ in the factor~$1/\zeta(z_1 + \cdots + z_s)$ in Equation~\eqref{eq:connected} which is equal to $-1$. Thus, the highest occuring order in~$(\mu,\nu)$ is the total degree in $(z_1, \dots, z_s)$ (which is $s(r+1)$), plus~$1$, minus the degree in $(\mu,\nu)$ of $\prod_{i=1}^{l(\mu)} \mu_i \prod_{j=1}^{l(\nu)} \nu_j$, for a total degree of $(r+1)s + 1 - l(\mu) - l(\nu)$.  

The lowest degree term in~$(\mu,\nu)$ of $h^{s}|_\mathfrak{c}$ occurs when we take the lowest possible degree in~$z$ in 
$\mZeta{P_I^l}{P_J^l}{P_K^l}{P_M^l}{P_N^l}{P_R^l}$ for all $l \in L(P)$ which is equal to~$1$. Therefore, the lowest occuring degree in~$(\mu,\nu)$ is equal to the number of commutator terms taken in~$P$. Taking a commutator term reduces the number of $\mathcal{E}$-operators by one. We start with $l(\mu)+l(\nu)+s$ of these operators and we end up with one. Therefore, the number of commutator terms taken is equal to $l(\mu) + l(\nu) + s - 1$. Thus, the lowest degree in~$(\mu,\nu)$ occuring is $l(\mu) + l(\nu) + s - 1 - l(\mu) - l(\nu) = s - 1$. 

By the Riemann-Hurwitz formula, the difference between the highest and the lowest degree in~$(\mu,\nu)$ is then equal to 
$(r+1)s + 1 - l(\mu) - l(\nu) - s + 1 = 2g$.

The coefficients of the homogeneous summands of the expansion of $1/\zeta(z_{[s]})$ have alternating signs. Therefore, to prove positivity of $P_{\mathfrak{c},k}^{s}$ it is enough to prove that all coefficients of odd-degree terms in the expansion of the $\zeta$-functions in the product in Equation~\eqref{eq:connected} are positive. By definition of the algorithm, we can only get a factor 
\begin{equation}
\mZeta{P_I^l}{P_J^l}{P_K^l}{P_M^l}{P_N^l}{P_R^l} = \zeta \left(\det \left( {\begin{smallmatrix}
 |\mu_{P_I^l}| - |\nu_{P_J^l}| & z_{P_K^l} \\
 |\mu_{P_M^l}| - |\nu_{P_N^l}| & z_{P_R^l} 
 \end{smallmatrix} } \right) \right)
\end{equation}
when $\mathcal{E}(P_I^l,P_J^l,P_K^l)$ has positive energy and $\mathcal{E}(P_M^l,P_N^l,P_R^l)$ has negative energy. But then 
\begin{equation}
\det \left( {\begin{array}{ccc}
 |\mu_{P_I^l} - |\nu_{P_J^l}| & z_{P_K^l} \\
 |\mu_{P_M^l} - |\nu_{P_N^l}| & z_{P_R^l} 
 \end{array} } \right) = az_{P_K^l} + b z_{P_R^l} ,
\end{equation}
for some $a,b > 0$, and since the odd-degree coefficients in the expansion of $\zeta(z)$ are all positive, this shows that all coefficients of odd-degree terms in the expansion in~$z_1, \ldots, z_s$ of the $\zeta$-functions in the product in Theorem~\ref{connectedTheorem} are positive. This completes the proof of the theorem.
\end{proof}

\subsection{Wall-crossing formula}

In this section we obtain the wall crossing formula for double Hurwitz numbers with completed cycles with respect to the walls~$W_{I,J}$ described in Remark~\ref{hyperplaneArrangement}. 

Fix $I \subset [m]$ and $J \subset [n]$. Let $\mathfrak{c}_1$ and $\mathfrak{c}_2$ be the two chambers bordering along the wall~$W_{I,J}$. The wall crossing formula is a formula for the difference between the polynomials describing the Hurwitz numbers on the different chambers:
$WC^{(r)}_{I,J} = h^{(r)}_g|_{\mathfrak{c}_1}- h^{(r)}_g|_{\mathfrak{c}_2}$.

In order to compute it, we define a series which captures the information about double Hurwitz numbers with completed $(r+1)$-cycles for any value of~$r$ with given ramification over $0$ and~$\infty$:
\begin{equation}\label{WallCrossingSeries}
H_{\mu,\nu}^s(z_1,\dots,z_s) := \frac{1}{\prod_{i=1}^{l(\mu)} \mu_i \prod_{j=1}^{l(\nu)} \nu_j} \langle \prod_{i=1}^{l(\mu)} \mathcal{E}_{\mu_i}(0) \prod_{k=1}^s \mathcal{E}_0(z_k)\prod_{i=1}^{l(\mu)} \mathcal{E}_{-\nu_j}(0) \rangle.
\end{equation}

\begin{remark} The information of the double Hurwitz number with completed $(r+1)$-cycles and ramification given by $(\mu,\nu)$ is encoded in $H_{\mu,\nu}^s$ for any value of~$r$, that is, $h_{\mu,\nu}^{(r),s}=[z_1^{r+1} \cdots z_s^{r+1}] H_{\mu,\nu}^s$. However, the coefficients of other monomials in $z_1,\dots,z_s$ also have an interpretation as some Hurwitz numbers. The coefficient of the monomial $z_1^{r_1+1} \cdots z_s^{r_s+1}$ for any non-negative integers $r_1, \cdots r_s$ is the number of covers of~$\CP1$ with ramification over $0$ and~$\infty$ given $\mu$ and~$\nu$, and ramification over~$s$ more points given by $\overline{(r_1)}, \ldots, \overline{(r_s)}$.

It is clear from the proof of Theorem~\ref{PiecewisePolynomial} that all coefficients of the power series $H_{\mu,\nu}^s$ are piecewise polynomial with respect to the descriped hyperplane arrangement. Thus, if we take completed cycles of different values for different ramification points, the corresponding Hurwitz number will still be piecewise polynomial.  
\end{remark}

So, let $W_{I,J}$ be a given wall in the hyperplane arrangement of Remark~\ref{hyperplaneArrangement}. Let $\mu, \nu$ also be given. Let $\delta$ denote the difference $\delta := |\mu_I| - |\nu_J|$.

\begin{theorem}
The wall crossing formula is given by
\begin{align}\label{eq:wc}
WC_{I,J}^{(r)}(\mu,\nu) & = [z_1^{r+1} \cdots z_s^{r+1}] \sum_{K \subset [s]} \delta^2 \frac{\zeta(z_K)\zeta(z_{K^c})\zeta(\delta z_{[s]})}{\zeta(\delta z_k) \zeta(\delta z_{K^c})\zeta(z_{[s]})} \cdot\\ \notag
&\cdot H_{\mu_I, \nu_J + \delta}^{|K|}(\{z_k\}_{k \in K}) H_{\mu_{I^c} + \delta, \nu_{J^c}}^{|K^c|}(\{z_k\}_{k \in K^c}). 
\end{align}
\end{theorem}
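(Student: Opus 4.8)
The plan is to establish the wall-crossing formula by directly computing the difference of the two connected Hurwitz-number polynomials using the series $H_{\mu,\nu}^s$ defined in~\eqref{WallCrossingSeries}, exploiting that crossing the wall $W_{I,J}$ reverses the sign of $\delta = |\mu_I| - |\nu_J|$ and therefore changes exactly which commutation patterns are compatible. First I would recall from the proof of Theorem~\ref{PiecewisePolynomial} that the set of commutation patterns $CP(\mathfrak{c})$ depends only on the signs of the quantities $\mu_A - \nu_B$ for subsets $A \subset [m]$, $B \subset [n]$. When we pass from $\mathfrak{c}_1$ to $\mathfrak{c}_2$ across $W_{I,J}$, the only sign that flips is the one attached to the pair $(I,J)$ itself, so the difference $WC_{I,J}^{(r)}$ receives contributions only from those commutation patterns that are sensitive to this particular sign. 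The strategy is to identify precisely these patterns and show that their net contribution factors into the product of two lower-complexity series attached to the two ``halves'' $(I,J)$ and $(I^c,J^c)$ of the ramification data.

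The key computational step is to organize the vacuum expectation value in~\eqref{WallCrossingSeries} according to the single operator $\mathcal{E}_{\delta}(z_K)$, for $K \subset [s]$, that carries the energy $\delta$ across the wall. Concretely, I would argue that a pattern contributing to the wall crossing must at some stage produce an intermediate operator whose $\mu$-$\nu$ energy equals $\delta = |\mu_I| - |\nu_J|$, built from exactly the indices in $I$, $J$, and some $K \subset [s]$; on one side of the wall this operator has positive energy and on the other negative, which is what makes the contribution differ. Using the commutation relation of Lemma~\ref{commutationRelation} together with Remark~\ref{regularizedCommutation}, the two groups of operators indexed by $(I,J,K)$ and $(I^c,J^c,K^c)$ interact with each other only through this single distinguished operator. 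The combinatorial factor recording this single interaction is exactly the prefactor
\begin{equation}
\delta^2 \frac{\zeta(z_K)\zeta(z_{K^c})\zeta(\delta z_{[s]})}{\zeta(\delta z_K) \zeta(\delta z_{K^c})\zeta(z_{[s]})},
\end{equation}
where the $\delta^2$ arises from the two contributions of opposite sign (one from each chamber) combining rather than cancelling, and the $\zeta$-ratios come from correctly rewriting the $1/\zeta(z_{[s]})$ normalization of Theorem~\ref{connectedTheorem} in terms of the normalizations of the two factor series $H_{\mu_I,\nu_J+\delta}^{|K|}$ and $H_{\mu_{I^c}+\delta,\nu_{J^c}}^{|K^c|}$.

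The remaining step is to recognize the two factored sums over sub-patterns as precisely the series $H^{|K|}$ and $H^{|K^c|}$ evaluated on the shifted ramification data: on the $(I,J)$ side the transplanted distinguished operator contributes an extra part $\delta$ to $\nu_J$, giving $\nu_J + \delta$, while on the $(I^c, J^c)$ side it contributes $\delta$ to $\mu_{I^c}$, giving $\mu_{I^c} + \delta$; the energy-balance condition $|\mu_I| = |\nu_J + \delta|$ and $|\mu_{I^c} + \delta| = |\nu_{J^c}|$ is exactly what makes these two smaller series well-defined. Summing over all $K \subset [s]$ (the ways of distributing the $s$ completed-cycle insertions among the two pieces) and extracting the coefficient of $z_1^{r+1} \cdots z_s^{r+1}$ then yields~\eqref{eq:wc}.

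I expect the main obstacle to be the bookkeeping that isolates precisely the patterns responsible for the sign flip and shows that no other patterns contribute to the difference. One must verify carefully that all patterns not involving the distinguished wall-crossing operator appear identically on both sides of the wall and hence cancel in $WC_{I,J}^{(r)}$, and that the surviving patterns decompose cleanly into the claimed product structure with no residual cross-terms between the two halves. Equally delicate is tracking the exact scalar and $\zeta$-function factors through the recombination of normalizations, in particular confirming the $\delta^2$ coefficient and that the $\zeta$-ratio is holomorphic at $z=0$ so that the coefficient extraction is legitimate; this is essentially the same holomorphy observation ($\zeta(nz)/\zeta(z)$ regular at the origin) already used in the proof of Theorem~\ref{PiecewisePolynomial}.
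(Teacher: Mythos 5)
Your overall strategy coincides with the paper's: isolate the commutation patterns that at some stage produce the operator $\mathcal{E}(I,J,K)$ (your ``distinguished operator'' of energy $\delta$), note that all other patterns contribute identically on both chambers and cancel, and then factor the surviving contribution into $H^{|K|}_{\mu_I,\nu_J+\delta}\cdot H^{|K^c|}_{\mu_{I^c}+\delta,\nu_{J^c}}$ summed over $K\subset[s]$. However, the step you defer as ``the main obstacle'' --- showing that the surviving patterns decompose cleanly into this product with no residual cross-terms --- is precisely the content of the paper's proof, and your sketch contains no mechanism for it. The paper's device is to run the algorithm according to the rules of $\mathfrak{c}_1$ on \emph{both} chambers, i.e.\ to keep commuting $\mathcal{E}(I,J,K)$ to the left even on $\mathfrak{c}_2$, where it has positive energy. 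Then any term in which $\mathcal{E}(I,J,K)$ enters a further commutator evolves identically on the two chambers (after that commutator all energy signs are chamber-independent, since the chambers differ only along $W_{I,J}$), so all such terms cancel in the difference; what survives is the single term in which $\mathcal{E}(I,J,K)$ is passed all the way to the left, which vanishes on $\mathfrak{c}_1$ (negative energy against the covacuum) but on $\mathfrak{c}_2$ terminates in the pairing $\langle\mathcal{E}(I,J,K)\,\mathcal{E}(I^c,J^c,K^c)\rangle = \zeta(\delta z_{[s]})/\zeta(z_{[s]})$. It is exactly this observation that yields both the factorization (the phase of the algorithm before $\mathcal{E}(I,J,K)$ is created gives one factor, the complementary operators give the other) and the $\zeta$-ratios in the formula. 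Without it, your assertion that the two halves ``interact only through the single distinguished operator'' is unjustified: within a single chamber they certainly do interact through other commutators; it is only in the difference across the wall that those interactions cancel.

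A second, smaller error: your explanation of the factor $\delta^2$ is wrong. It does not arise from ``two contributions of opposite sign combining rather than cancelling''; it is a normalization artifact. Each of the series $H^{|K|}_{\mu_I,\nu_J+\delta}$ and $H^{|K^c|}_{\mu_{I^c}+\delta,\nu_{J^c}}$ is defined in~\eqref{WallCrossingSeries} with a prefactor containing $1/\delta$, because the transplanted part $\delta$ is one of its ramification parts. When the two phase contributions are rewritten in terms of these $H$'s and their product is divided by the global normalization $\prod_{i\in[m]}\mu_i\prod_{j\in[n]}\nu_j$, the two compensating factors of $\delta$ remain, and this is the $\delta^2$ in~\eqref{eq:wc}.
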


\begin{proof} 
Let $P$ be a commutation pattern in $CP(\mathfrak{c}_1)$. If $P$ does not produce the operator $\mathcal{E}(I,J,K)$ for some $K \subset [s]$ at some point, it will also be a commutation pattern in $CP(\mathfrak{c}_2)$. Thus, a commutation pattern $P$ only contributes to the wall crossing formula if at some point it produces $\mathcal{E}(I,J,K)$ for some $K \subset [s]$. Let $P$ be such a pattern. Using that the operators in any of the three products in Equation~\eqref{algorithmVacuumExpectation} commute amongst themselves, we may start the algorithm with the vacuum expectation value
\begin{equation}
\left\langle \prod_{i \notin I} \mathcal(i,\emptyset,\emptyset) \prod_{i \in I} \mathcal{E}(i, \emptyset,\emptyset) \prod_{k=1}^s \mathcal{E}(\emptyset, \emptyset, k)\prod_{j \in J}\mathcal{E}(\emptyset,j,\emptyset)\prod_{j \notin J}(\emptyset,j,\emptyset) \right\rangle. 
\end{equation}
If a pattern produces $\mathcal{E}(I,J,K)$, the first vacuum expectation value where it occurs must be
\begin{equation}\label{firstOccur}
\langle \prod_{i \notin I}\mathcal{E}(i,\emptyset,\emptyset) \prod_{k \notin K} \mathcal{E}(\emptyset, \emptyset, k)\ \mathcal{E}(I,J,K) \prod_{j \notin J}\mathcal{E}(\emptyset, j, \emptyset) \rangle. 
\end{equation}
Let $T_1^K$ be the product of $\zeta$-functions produced by the algorithm up untill this point. Note that it does not depend on whether we run the algorithm on $\mathfrak{c}_1$ or~$\mathfrak{c}_2$. It is easy to see that it is given by
\begin{equation}
T_1^K(\{z_k\}_{k \in K}) := \left\langle \prod_{i \in I}\mathcal{E}(i,\emptyset,\emptyset)\prod_{k \in K} \mathcal{E}(\emptyset,\emptyset,k) \prod_{j \in J}\mathcal{E}(\emptyset,j,\emptyset) \mathcal{E}_{-\delta}(0)\right\rangle \frac{\zeta(z_K)}{\zeta(\delta z_K)} . 
\end{equation}
On the other hand, by defintion of the function~$H$ we have:
\begin{align}
& H_{\mu_I,\nu_J + \delta}(\{z_k\}_{k \in K}) = \frac{1}{\delta \prod_{i \in I} \mu_i \prod_{j \in J} \nu_j} \cdot \\ \notag
&\cdot \left\langle \prod_{i \in I}\mathcal{E}(i,\emptyset,\emptyset)\prod_{k \in K} \mathcal{E}(\emptyset,\emptyset,k) \prod_{j \in J}\mathcal{E}(\emptyset,j,\emptyset) \mathcal{E}_{-\delta}(0)\right\rangle.
\end{align}
Therefore,
\begin{equation}\label{eq:t1}
T_1^K(\{z_k\}_{k \in K}) = \delta \prod_{i \in I} \mu_i \prod_{j \in J} \nu_j \frac{\zeta(z_K)}{\zeta(\delta z_K)} H_{\mu_I,\nu_J + \delta}(\{z_k\}_{k \in K}).
\end{equation}

Let $T_2^K$ denote the difference between the polynomials computing the vacuum expectation value~\eqref{firstOccur} on $\mathfrak{c}_1$ and~$\mathfrak{c_2}$. To compute this, it will be better to let the algorithm run according to the rules of the chamber~$\mathfrak{c}_1$ on both sides (i.e.; we use the set of commumation patterns $CP(\mathfrak{c}_1)$). This means we will move the operator $\mathcal{E}(I, J, K)$ to the left on both chambers, even though on~$\mathfrak{c}_2$ it has positive energy.

If the operator $\mathcal{E}(I, J, K)$ is involved in a commutator term at any point, the algorithm will run as normal afterwards on both chambers, since the chambers differed by only one wall. Therefore, the only contribution to $T_2^K$ comes from commutation patterns where $\mathcal{E}(I, J, K)$ is moved entirely from to left. The result will then be zero on~$\mathfrak{c}_1$, since there is an operator of negative energy on the far left, but it will be non-zero on~$\mathfrak{c}_2$. The last step in the algorithm on $\mathfrak{c}_2$ for Equation~\eqref{firstOccur} will be
\begin{equation}
\langle \mathcal{E}(I, J, K)\ \mathcal{E}(I^\text{c}, J^\text{c}, K^\text{c})\rangle = \frac{\zeta(\delta z_{[s]})}{\zeta(z_{[s]})} ,
\end{equation}
where $I^\text{c}$ denotes the complement of $I \subset [m]$, and the same for $J^\text{c}$ and~$K^\text{c}$. Also using that
\begin{equation}
\langle \mathcal{E}_\delta(0)\ \mathcal{E}(I^\text{c}, J^\text{c}, K^\text{c}) \rangle = \frac{\zeta(\delta z_{K^\text{c}})}{\zeta(z_{K^\text{c}})},
\end{equation}
we see that 
\begin{align}
& T_2^K(\{z_k\}_{k \notin K}) \\ \notag 
&= \langle\mathcal{E}_\delta(0) \prod_{i \notin I}\mathcal{E}(i,\emptyset,\emptyset)\prod_{k \notin K}\mathcal{E}(\emptyset, \emptyset, k)\prod_{j \notin J} \mathcal{E}(\emptyset, j, \emptyset)\rangle \frac{\zeta(z_{K^\text{c}}) \zeta(\delta z_{[s]})}{\zeta(\delta z_{K^\text{c}})\zeta(z_{[s]})} .
\end{align}
(by $\prod_{k \notin K}$ we denote $\prod_{k \in K^\text{c}}$; the same for $I$ and $J$).
On the other hand
\begin{align}
& H_{\mu_{I^\text{c} + \delta},\nu_{J^\text{c}}}(\{z_k\}_{k \in K^\text{c}}) = \frac{1}{\delta \prod_{i \notin I} \mu_i \prod_{j \notin J} \nu_j} \cdot \\ \notag
&\cdot \langle \mathcal{E}_\delta(0)\ \prod_{i \notin I}\mathcal{E}(i,\emptyset,\emptyset)\prod_{k \notin K} \mathcal{E}(\emptyset,\emptyset,k) \prod_{j \notin J}\mathcal{E}(\emptyset,j,\emptyset) \mathcal{E}_{-\delta}(0)\rangle,
\end{align}
therefore
\begin{align}\label{eq:t2}
& T_2^K(\{z_k\}_{k \notin K}) \\ \notag
& = \delta \prod_{i \notin I} \mu_i \prod_{j \notin J} \nu_j \mathfrak{\zeta(z_{K^\text{c}})\zeta(\delta z_{[s]})}{\zeta(\delta z_{K^\text{c}})\zeta(z_{[s]})} H_{\mu_{I^\text{c}} + \delta, \nu_{J^\text{c}}}(\{z_k\}_{k \in K^\text{c}}) .
\end{align}
It is clear that 
\begin{equation}
WC_{I,J}^{(r)}(\mu, \nu) = \frac{1}{\prod_{i \in [m]} \mu_i \prod_{j \in [n]} \nu_j} [z_1^{r+1} \cdots z_s^{r+1}] \sum_{K \subset [s]} T_1^K T_2^K .
\end{equation}
Substituting Equations~\eqref{eq:t1} and~\eqref{eq:t2} into this formula, we obtain the wall crosing formula~\eqref{eq:wc}.
\end{proof}




\section{An analogue of GJV-formula}

In this section, we discuss an analogue of the Goulden-Jackson-Vakil formula for the one-part double Hurwitz numbers that might relate them to the intersection theory of some moduli spaces. These conjectural ``intersection numbers'' have very nice properties as some explicit solutions of the KP hierarchy. The number $r\geq 1$ is fixed throughout the section, so we omit the superscript $(r)$ in all notations.

\subsection{The formula} Let $\mu$ be an arbitrary partition $(\mu_1,\dots,\mu_l)$ and $g\geq 0$ be an arbitrary non-negative integer. We consider the one-part double Hurwitz number with completed $(r+1)$-cycles $h_{g,|\mu|,\mu}$.
We propose the following formula:
\begin{equation}\label{eq:GJV}
h_{g,|\mu|,\mu} = \frac{m!}{d} \int_{X_{g,n}} \frac{1-\Lambda_{2}+\Lambda_{4}-\cdots+(-1)^{g}\Lambda_{2g}}
{(1-\mu_1\Psi_1)\cdots(1-\mu_n\Psi_n)},
\end{equation}
where $X_{g,n}$ is a sequence of spaces of complex dimension $2g(r+1)+n-1$, and we fix the degrees of the rational cohomology classes 
$\Lambda_{2k}\in H^{4rk}(X_{g,n})$ and $\Psi_1,\dots,\Psi_r\in H^{2r}(X_{g,n})$. Existence of these geometric objects is a pure speculation, so a way to understand this formula is the following. 

One-part double Hurwitz numbers with completed cycles $h_{g,|\mu|,\mu}$ are expressed in terms of some new combinatorially significant numbers that we denote by 
\begin{equation}
\langle\Lambda_{2k}\prod_{i=1}^n \tau_{d_i}\rangle_g := \int_{X_{g,n}}\Lambda_{2rk}\prod_{i=1}^n\Psi_i^{d_i}
\end{equation}
that are symmetric in $d_1,\dots,d_n$, non-zero only if $2g(r+1)+n-1=(2k+\sum_{i=1}^nd_i)r$, and have interesting properties together with a hope to be related to geometry in future.

\subsection{Generating function for intersection numbers} We consider a generating function for the numbers $\langle\Lambda_{2k}\prod_{i=1}^n \tau_{d_i}\rangle_g$.
Let
\begin{equation}
G(u):= \sum_{j,k_1,k_2,\dots} (-1)^{j}\langle \Lambda_{2j} \tau_0^{k_0} \tau_1^{k_1} \dots\rangle_g u^{2j}
\frac{T_0^{k_0}}{k_0!} \frac{T_1^{k_1}}{k_1!} \dots.
\end{equation}
Here we take the sum of all non-negative integer indices $j,k_1,\dots,k_n$, $n\geq 0$, such that there exists a non-negative integer $g$ such that 
$2g(r+1)+n-1=(2k+\sum_{i=1}^nd_i)r$.

\begin{notation}\label{notationAbuse}
In this section, we use the isomorphism described in section~\ref{sectionCutAndJoin} between the infinite wedge space and the space of formal power series $\C[[q_1, q_2, \ldots]]$ to interpret the $\mathcal{E}$-operators as operators on $\C[[q_1, q_2, \ldots]]$. By abuse of notation, we denote these operators in the same way. We denote by $\mathcal{E}_{k,a}$ the operator $[z^a] \mathcal{E}_k(z)$. 
\end{notation}

We would like to consider the formal variable $T_k$, $k=0,1,\dots$, as linear functions in formal variables $q_i$, $i=1,2,\dots$. We set $T_0=q_1$, and $T_{k+1}=(u\E_{0,1}+\E_{-1,1}) T_k$. We list the first few expressions:
\begin{align}\label{eq:T}
T_0 &= q_1,\\
T_1 &= uq_1 + q_2, \notag \\
T_2 &= u^2 q_1 + 3uq_2 + 2q_3, \notag
\end{align}
and so on. 

\begin{theorem}\label{thm1} For any function $c(u)$, the series 
$c(u)+G(u,q_1,q_2,\dots)$
is a solution of the Hirota equations in variables $q_i$, $i=1,2,\dots$ ($u$ is just a parameter).
\end{theorem}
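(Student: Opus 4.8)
To prove that $c(u) + G(u,q_1,q_2,\dots)$ solves the Hirota equations in the variables $q_i$, the natural strategy is to show that the generating series $G$, viewed through the isomorphism of Section~\ref{sectionCutAndJoin} between $\C[[q_1,q_2,\dots]]$ and the infinite wedge space, is (up to the additive constant $c(u)$) the image of a \emph{decomposable} vector in $\Lambda^{\frac{\infty}{2}}(V)$ — that is, a vector lying on the Sato Grassmannian. It is a standard fact that a vector $v\in\Lambda^{\frac{\infty}{2}}(V)$ is decomposable if and only if its image under $v_\lambda\leftrightarrow s_\lambda(q)$ is a $\tau$-function of the KP hierarchy, equivalently a solution of the Hirota bilinear equations; the constant $c(u)$ is harmless since Hirota equations only involve derivatives. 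So the whole task reduces to exhibiting $G$ as such a $\tau$-function.

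\emph{The concrete form of $G$.} First I would rewrite $G(u)$ explicitly as a vacuum expectation value in the infinite wedge. The recursion $T_0=q_1$, $T_{k+1}=(u\E_{0,1}+\E_{-1,1})T_k$ defines an operator $A := u\E_{0,1}+\E_{-1,1}$ acting on $\C[[q_1,q_2,\dots]]$, and the variables $T_k$ are $A^k$ applied to $q_1$. The summation defining $G$ over monomials $\prod T_k^{k}/k!$ with the weights $(-1)^j\langle\Lambda_{2j}\prod\tau_{d_i}\rangle_g\,u^{2j}$ should, after substituting the definitions of the intersection numbers in terms of $X_{g,n}$ and comparing with the one-part Hurwitz formula of Section~\ref{Sec:OnePart}, collapse into an expression of the form $G=\langle\, \mathcal{O}(u)\,\rangle$ for a single concrete operator $\mathcal{O}(u)$ built from $\E$-operators. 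The key input here is the GJV-type formula~\eqref{eq:GJV}, which translates the intersection numbers into one-part completed double Hurwitz numbers, and Proposition~\ref{VacuumFormCompletedHurwitz}, which presents those as vacuum expectations.

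\emph{Identifying the $\tau$-function.} The crux is then to show that this vacuum expectation has the shape $\langle 0|\,\Gamma_+(q)\,g\,|0\rangle$ where $\Gamma_+(q)=\exp(\sum_k q_k\alpha_k/k)$ is the standard vertex operator producing Schur polynomials, and $g$ is an operator in (the projective completion of) $GL(\infty)$ — ideally a group-like element of the form $\exp(\text{quadratic in }\psi,\psi^*)$ or a product of such. Any vector $g|0\rangle$ with $g\in GL(\infty)$ satisfies the bilinear identity $\sum_i \psi_i g|0\rangle\otimes\psi_i^* g|0\rangle = \sum_i \psi_i|0\rangle\otimes\psi_i^*|0\rangle$, which is exactly the Hirota equations after expansion in Schur polynomials. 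Thus the proof amounts to checking that the operator $\mathcal{O}(u)$ assembled above is group-like, i.e.\ that its ``logarithm'' lies in the Lie algebra $\mathfrak{gl}(\infty)$ acting via $E_{ij}\mapsto {:}\psi_i\psi_j^*{:}$; the operators $\E_n(z)$, $\E_{k,a}$, and $F_{r+1}$ are all linear combinations of the $E_{ij}$'s (they live in $\mathfrak{gl}(\infty)$, not its universal enveloping algebra), so their exponentials are automatically group-like, and the $A$-recursion just dresses the vacuum by conjugation that preserves decomposability.

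\emph{The main obstacle.} The hardest and least routine step is the middle one: verifying that the combinatorially defined series $G$, with its intersection-number coefficients and its peculiar $T_k$-recursion, genuinely coincides with $\langle 0|\Gamma_+(q)\,g|0\rangle$ for a \emph{single} group-like $g$, rather than a sum of such that could fail to be a $\tau$-function. I expect this to require unwinding the recursion $T_{k+1}=(u\E_{0,1}+\E_{-1,1})T_k$ into a closed-form action of $\exp$ of an element of $\mathfrak{gl}(\infty)$ on the vacuum, and matching it term-by-term against the explicit one-part formula of Section~\ref{Sec:OnePart}. Once that identification is made, decomposability and hence the Hirota equations follow from the Sato/Kac–Raina theory essentially for free; the additive freedom $c(u)$ is accommodated because the bilinear Hirota form annihilates constants.
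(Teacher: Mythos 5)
Your general framework (boson--fermion correspondence, decomposability, exponentials of $\mathfrak{gl}(\infty)$ elements preserving the Grassmannian cone) is the same mechanism that underlies the paper's proof, but there is a genuine gap at precisely the point you wave away. The claim that ``the constant $c(u)$ is harmless since Hirota equations only involve derivatives'' is false: the Hirota equations are \emph{bilinear} in the tau function. For a Hirota polynomial $P$ of the KP hierarchy (even, with vanishing constant term, since $\tau=1$ is a solution), one has $P(D)(\tau+c)\cdot(\tau+c)=P(D)\tau\cdot\tau+2c\,P(\d)\tau$, so $\tau+c$ solves the Hirota equations for all $c$ if and only if $\tau$ satisfies, in addition, the \emph{linearized} Hirota equations $P(\d)\tau=0$; equivalently, the sum of a decomposable vector and a multiple of the vacuum is in general \emph{not} decomposable. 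Thus the statement ``for any function $c(u)$'' is strictly stronger than ``$G$ is a tau function'', which is all your plan would deliver even if completed. (This is exactly why the paper records separately, in the corollary following Theorem~\ref{thm1}, that $F$ satisfies both the Hirota and the linearized Hirota equations.) The paper closes this gap by absorbing the constant at the start: the initial datum is $c+\sum_i p_i$, which after the change of variables \eqref{eq:change} becomes $cu+q_1$, whose wedge image $(\underline{1/2}+cu\,\underline{-1/2})\wedge\underline{-3/2}\wedge\underline{-5/2}\wedge\cdots$ is decomposable for \emph{every} $c$; all operators applied afterwards --- $\exp(\beta Q_{r+1})$ from the cut-and-join equation \eqref{eq:H-formula}, and the KP symmetry \eqref{eq:change} --- annihilate constants, so the entire affine family $c+H$, and hence $c(u)+G$, stays on the cone of solutions.

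Second, the step you yourself call ``the main obstacle'' is the actual substance of the proof, and your proposal does not supply the mechanism that makes it work. The paper does not match $G$ term-by-term against the one-part formula of Section~\ref{Sec:OnePart}; it proves Theorem~\ref{thm2} by (i) writing $H+c=\exp\bigl(\beta\,[w^{r+1}]\tE_0(w)\bigr)\bigl(c+\sum_i p_i\bigr)$, which is the cut-and-join equation, (ii) applying the explicit substitution \eqref{eq:change}, a composition of the rescaling $p_b=q_b/u^b$ with conjugation by $\exp(-\E_{-1,1}/u)$, hence a symmetry of the KP hierarchy, and (iii) computing the conjugated operator via the commutator lemma $[Y_i,\E_{-1,1}]=Y_{i+1}$ and Corollary~\ref{cor:E0}; it is this conjugation that converts the recursion $T_{k+1}=(u\E_{0,1}+\E_{-1,1})T_k$ into the closed formula \eqref{eq:tau2}. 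Without a substitute for (ii)--(iii), ``unwinding the recursion and matching term-by-term'' is a hope, not a proof; and once you do carry out these steps you have in effect reproduced the paper's argument, in which Theorem~\ref{thm1} follows because \eqref{eq:change} is a KP symmetry, not by a separate decomposability check for $G$.
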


In particular, we consider the series $F:=G|_{u=0}$, that is, the generation function for the intersection numbers without $\Lambda$-classes:
\begin{equation}
F(q_0,q_1,\dots)=\sum_{k_0,k_1,\dots} \langle \tau_0^{k_0} \tau_1^{k_1} \dots\rangle_g 
\frac{(0!q_1)^{k_0}}{k_0!} \frac{(1!q_2)^{k_1}}{k_1!} \dots.
\end{equation}
Theorem~\ref{thm1} implies the following property of $F$.

\begin{corollary} The series $F(q_1,q_2,\dots)$ is a solution of the Hirota equations and linearized Hirota equations.
\end{corollary}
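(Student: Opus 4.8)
The plan is to deduce the corollary directly from Theorem~\ref{thm1}, exploiting the fact that the additive term $c(u)$ there is completely arbitrary. The key point is that the Hirota equations form a system of \emph{quadratic} bilinear relations in the unknown series; writing the associated symmetric bilinear form as $B$, so that the Hirota system reads $B(\tau,\tau)=0$, I would expand $B(c(u)+G(u),\,c(u)+G(u))$ and compare powers of the constant $c:=c(u)$. For each fixed value of the parameter $u$ this $c$ is a genuine scalar, free to range over all of $\Q$ as the function $c(\cdot)$ varies. Since the Hirota quadratic form has no constant term — equivalently, every constant series is a (vacuum) solution, so $B(1,1)=0$ — the expansion $B(c+G,c+G)=c^2B(1,1)+2c\,B(1,G)+B(G,G)$ collapses to $2c\,B(1,G)+B(G,G)$, and requiring it to vanish for all $c$ forces the two pieces $B(G,G)=0$ and $B(1,G)=0$ separately.

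Here the first relation $B(G(u),G(u))=0$ says that $G(u)$ itself solves the Hirota equations, while the second relation $B(1,G(u))=0$ is precisely the linearization of the Hirota system about the vacuum solution $\tau=1$ applied to $G(u)$; that is, $G(u)$ solves the linearized Hirota equations. Both statements hold for every value of $u$, so in particular they survive the specialization $u=0$.

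It then remains to identify $G|_{u=0}$ with $F$. First I would record the elementary specializations $T_k|_{u=0}=k!\,q_{k+1}$, which follow by induction from $T_0=q_1$ and $T_{k+1}=(u\E_{0,1}+\E_{-1,1})T_k$, the operator $u\E_{0,1}$ dropping out at $u=0$. Since the only explicit $u$-dependence in $G$ enters through the factors $u^{2j}$, setting $u=0$ annihilates every term with $j\geq 1$ and leaves exactly $F=\sum\langle\tau_0^{k_0}\tau_1^{k_1}\cdots\rangle_g\,\prod_i (i!\,q_{i+1})^{k_i}/k_i!$. Substituting $u=0$ into the two relations above then yields $B(F,F)=0$ and $B(1,F)=0$, which are precisely the two assertions of the corollary.

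The only real obstacle is the algebraic bookkeeping that isolates the linearized system: one must verify that the Hirota quadratic form genuinely has vanishing constant term, so that arbitrary constants are solutions and the $c^2$-term drops, and that the cross term $B(1,\,\cdot\,)$ coincides with the \emph{standard} linearized Hirota operator about the vacuum rather than some other linearization. Once the bilinear form $B$ is pinned down in the variables $q_i$, the separation into $B(G,G)=0$ and $B(1,G)=0$ is forced purely by the arbitrariness of $c(u)$, and the restriction to $u=0$ is immediate; no limiting argument is needed, since $B$ depends polynomially on the coefficients of its arguments and $G|_{u=0}=F$ by direct substitution.
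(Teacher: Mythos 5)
Your argument is correct and is exactly the reasoning the paper leaves implicit when it states that Theorem~\ref{thm1} implies the corollary: since $c(u)$ is arbitrary and each Hirota equation is a quadratic relation $B(\tau,\tau)=0$ with $B(1,1)=0$, separating powers of $c$ forces $B(G,G)=0$ and $B(1,G)=0$, i.e.\ the Hirota and linearized Hirota equations for $G$, and the specialization $u=0$ (under which $T_k\mapsto k!\,q_{k+1}$ and $G\mapsto F$) preserves both. This is the same mechanism used in the paper's references on this point, so no further comment is needed.
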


\subsection{An explicit formula for $G$ and $F$} In this section, we give explicit formulas for the series $G$ and $F$. For that we need to introduce some operators $Y_i$, $i\geq 0$. We denote by $Y_0$ the operator $\tilde\E_0$. We denote by $Y_{i+1}$, $i\geq 0$, the operator
\begin{equation}
Y_{i+1}(w):=\zeta(w)^{i+1}\left(\prod_{k=0}^i \left(\frac{\d}{\d w} - \frac{i}{2}+k\right)\right)\E_{-(i+1)}(w)
\end{equation}
Observe that $Y_{i+1}(w)=O(w^{i+1})$.

\begin{theorem}\label{thm2} We have:
\begin{equation}\label{eq:tau2}
G(u,q_1,q_2,\dots)=\exp\left([w^{r+1}]\sum_{k=0}^{r+1} u^k \frac{Y_{r+1-k}(w)}{(r+1-k)!}\right)q_1.
\end{equation}
\end{theorem}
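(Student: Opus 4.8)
The plan is to realize the series $G$ as a state in the bosonic Fock space $\C[[q_1,q_2,\dots]]$ and to prove the operator identity~\eqref{eq:tau2} by unwinding the definition of the conjectural intersection numbers through the explicit one-part formula of Section~\ref{Sec:OnePart}. By the proposed formula~\eqref{eq:GJV}, the numbers $\langle\Lambda_{2j}\prod\tau_{d_i}\rangle_g$ are, up to the combinatorial prefactor in~\eqref{eq:GJV}, nothing but the coefficients of the one-part double Hurwitz number $h_{g,|\mu|,\mu}$ regarded as a polynomial in $\mu_1,\dots,\mu_n$ with $n=l(\mu)$: expanding $1/(1-\mu_i\Psi_i)=\sum_{d_i}\mu_i^{d_i}\Psi_i^{d_i}$ shows that the coefficient of $\prod\mu_i^{d_i}$ is $\sum_j(-1)^j\langle\Lambda_{2j}\prod\tau_{d_i}\rangle_g$, and since the constraint $2g(r+1)+n-1=(2j+\sum d_i)r$ fixes $j$ for each monomial, letting $g$ vary recovers every individual intersection number. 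First I would therefore rewrite $G$ purely in terms of one-part Hurwitz numbers, with $u$ playing the role of a formal parameter grading the $\Lambda$-classes, that is, the genus of the contracted components in the geometric picture of Section~\ref{sectionCompletedHurwitz}.

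The next step is to express this repackaged series as an operator acting on the single-box state. Using Proposition~\ref{VacuumFormCompletedHurwitz} together with the conventions of Notation~\ref{notationAbuse}, the one-part generating function is a vacuum-type expression obtained by applying, to the state $q_1=v_{(1)}$ attached to the single part on one side, a product of completed-cycle operators $F_{r+1}=[w^{r+1}]\tE_0(w)$, one for each branch point, and summing over their number $s$ with the symmetry factor $1/s!$ (consistent with the $\beta^s/s!$ normalization of $H_{r+1}$). This already yields an expression of the shape $\exp(\Omega)q_1$. The variables defined by $T_0=q_1$ and $T_{k+1}=(u\E_{0,1}+\E_{-1,1})T_k$ are exactly the images, under the isomorphism of Section~\ref{sectionCutAndJoin}, of the building blocks recording a $\Psi^k$-insertion on the $\mu$-side: the diagonal piece $u\E_{0,1}$ carries the $u$-grading and the off-diagonal piece $\E_{-1,1}$ adds a box. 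I would verify by induction, using $[\tE_a(z),\tE_b(w)]=\zeta(aw-bz)\tE_{a+b}(z+w)$, that this recursion reproduces the correct $q$-expansion of the $\tau$-generating variables.

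The heart of the proof is the identification $\Omega=[w^{r+1}]\sum_{k=0}^{r+1}u^k\,Y_{r+1-k}(w)/(r+1-k)!$. The term $k=r+1$ contributes $u^{r+1}[w^{r+1}]Y_0(w)=u^{r+1}F_{r+1}$, the undeformed completed cycle, and the remaining terms are the genuinely new $u^k$-corrections. To pin them down I would conjugate $F_{r+1}$ by the $q$-variable exponential, in the spirit of Lemma~\ref{Lem1} and Lemma~\ref{Lem3}: this dressing turns the completed-cycle operator into the differential operators $Y_{i+1}(w)=\zeta(w)^{i+1}\prod_{k=0}^i(\d/\d w-\tfrac i2+k)\,\E_{-(i+1)}(w)$, the prefactor $\zeta(w)^{i+1}$ accounting for the $i+1$ boxes created and the factor $\d/\d w-\tfrac i2+k$ producing the $\Psi$/$\Lambda$ weights. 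The key computational input is the hook evaluation of Lemma~\ref{Lem4}, $\zeta(w)^{-1}s_{\mu_{a,b}}(\zeta(w),\zeta(2w),\dots)=(-1)^b e^{(a-b-1)w/2}$, which alone survives on the single-part side since $\chi_{(d)}^\lambda=0$ unless $\lambda$ is a hook; it converts the differential dressing into the shifted power sum $p_{r+1}(\lambda)/(r+1)!$ restricted to hooks and its $u$-interpolation, and extracting $[w^{r+1}]$ enforces the completed-$(r+1)$-cycle condition.

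The main obstacle I expect is precisely this last identification: showing that the operator $\prod_{k=0}^i(\d/\d w-\tfrac i2+k)$ together with the $\zeta$-prefactor encodes the classes $\Lambda_{2(r+1-k)}$ with the correct signs, rather than merely matching degrees. This requires careful bookkeeping of how the eigenvalue $p_{r+1}(\lambda)/(r+1)!$ of $F_{r+1}$ decomposes, on hooks, according to the number of boxes created and to the genus tracked by $u$; the alternating structure $1-\Lambda_2+\Lambda_4-\cdots$ and the signs $(-1)^j$ in $G$ must emerge automatically from the signs $(-1)^b$ of Lemma~\ref{Lem4}. A further delicate point is reconciling the $u$-grading, since $G$ contains only even powers $u^{2j}$ while $\Omega$ and the recursion for the $T_k$ carry all powers $u^k$, $0\le k\le r+1$; one must check that the odd contributions reorganize consistently. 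Once this operator identity is established, the theorem follows by applying $\exp(\Omega)$ to $q_1$ and reading off coefficients.
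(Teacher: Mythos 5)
Your proposal has the right global shape --- reduce $G$ to the one-part Hurwitz generating series via the GJV-type formula, write that series as an exponential of operators acting on a state in $\C[[q_1,q_2,\dots]]$, and identify the exponent with $[w^{r+1}]\sum_k u^k Y_{r+1-k}(w)/(r+1-k)!$ --- but the two steps that would make it a proof are wrong or missing. First, the one-part generating series is \emph{not} of the form $\exp(\Omega)\,q_1$ on the nose: the single part $\nu=(|\mu|)$ corresponds to the state $p_{|\mu|}$ (equivalently $\alpha_{-|\mu|}|0\rangle$), so Proposition~\ref{VacuumFormCompletedHurwitz} together with the $\beta^s/s!$ summation gives the integrated cut-and-join equation $H+c(\beta)=\exp\bigl(\beta[w^{r+1}]\tilde\E_0(w)\bigr)\bigl(c(\beta)+\sum_{i\ge1}p_i\bigr)$, i.e.\ Equation~\eqref{eq:H-formula}, whose initial state is $\sum_i p_i$, not $q_1$; applying $\exp(\beta F_{r+1})$ to $q_1=v_{(1)}$ alone would only produce degree-one covers. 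Second, and more seriously, the mechanism you propose for producing the operators $Y_i$ cannot work: conjugating $\tilde\E_0(w)$ by $\exp(\sum p_k\alpha_k/k)$ as in Lemmas~\ref{Lem1} and~\ref{Lem3} yields operators $\tE_K(w)$ of \emph{non-negative} index multiplied by $\zeta$'s and $p$'s --- that is exactly the cut-and-join operator $Q_{r+1}$, which is already encoded in Equation~\eqref{eq:H-formula} --- whereas $Y_{i+1}(w)$ is built from the creation-type operator $\E_{-(i+1)}(w)$ and derivatives $\d/\d w$. Such terms can only arise from iterated commutators with $\E_{-1,1}$, not from the $p$-dressing; likewise Lemma~\ref{Lem4} on hooks is an ingredient of the proof of the cut-and-join theorem, not of the present one, and it will not generate the $Y$'s either.

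The missing idea is the change of variables~\eqref{eq:change}: rescale $p_b=q_b/u^b$ and then conjugate by $\exp(-\E_{-1,1}/u)$. This one operation does all the work in the paper's proof: it sends the initial state $\sum_i p_i$ to $q_1/u$ (the coefficients of all higher $q_i$ vanish by binomial alternation); it turns $u\tilde\E_{0,1}$ into $u\tilde\E_{0,1}+\E_{-1,1}$, which is precisely why the variables $T_k$ obey the recursion $T_{k+1}=(u\E_{0,1}+\E_{-1,1})T_k$ of Equation~\eqref{eq:T}; and, via the commutation lemma $[Y_i,\E_{-1,1}]=Y_{i+1}$, it conjugates $\tilde\E_0(w)$ into $\sum_{i\ge0}u^{-i}Y_i(w)/i!$ (Corollary~\ref{cor:E0}). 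Combining this with Equation~\eqref{eq:H-expansion}, which identifies $uH$ with $G$ in the new coordinates, and truncating the sum using $Y_i(w)=O(w^i)$ yields Equation~\eqref{eq:tau2}; the fact that the change of variables is a symmetry of the Hirota equations is then exactly what gives Theorem~\ref{thm1}. (Your worry about parity of powers of $u$ is not an issue: $G$ carries odd powers of $u$ through the $T_k$'s themselves.) Without this change of variables there is no route from your dressed operators to the $Y_i$'s --- and indeed you flag exactly that identification as an unresolved obstacle; that is the gap.
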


\begin{corollary} We have:
\begin{equation}\label{eq:formula-F}
F(q_1,q_2,\dots)=\exp\left(Y_{r+1}(0)\right)q_1.
\end{equation}
\end{corollary}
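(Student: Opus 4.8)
The plan is to derive the corollary as the immediate specialization of Theorem~\ref{thm2} to the value $u=0$, since by definition $F=G|_{u=0}$. First I would set $u=0$ in the formula~\eqref{eq:tau2} for $G$. In the operator sum $\sum_{k=0}^{r+1}u^k \frac{Y_{r+1-k}(w)}{(r+1-k)!}$ appearing in the exponent, every summand with $k\geq 1$ carries a positive power of~$u$ and hence vanishes at $u=0$; only the $k=0$ term $\frac{Y_{r+1}(w)}{(r+1)!}$ survives. Because substitution of $u=0$ into a formal power series commutes with the coefficient extraction $[w^{r+1}]$ and with the formal exponential, one obtains
\begin{equation*}
F=G|_{u=0}=\exp\left([w^{r+1}]\frac{Y_{r+1}(w)}{(r+1)!}\right)q_1.
\end{equation*}

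The remaining step is to identify the operator $[w^{r+1}]\tfrac{1}{(r+1)!}Y_{r+1}(w)$ with the object written $Y_{r+1}(0)$ in the statement. Here I would unwind the definition of $Y_{r+1}$ together with the observation that $Y_{r+1}(w)=O(w^{r+1})$: since the series for $Y_{r+1}(w)$ begins in degree $r+1$, extracting the coefficient of $w^{r+1}$ simply records its leading term, and it is this (normalized) leading operator that the notation $Y_{r+1}(0)$ must abbreviate, a literal substitution $w=0$ giving $0$ because $\zeta(0)=0$. Concretely I would expand $\zeta(w)^{r+1}=w^{r+1}+O(w^{r+3})$ in $Y_{r+1}(w)=\zeta(w)^{r+1}\prod_{k=0}^{r}\left(\frac{\d}{\d w}-\frac r2+k\right)\E_{-(r+1)}(w)$ and note that only the $w$-constant part of the remaining factor contributes to the coefficient of $w^{r+1}$. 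Writing $\E_{-(r+1)}(w)=\sum_{m\in\Z+1/2}e^{w(m+(r+1)/2)}E_{m+r+1,m}$ and applying the differential operator term by term, the leading operator works out to $\sum_{m}\binom{m+r+\frac12}{r+1}E_{m+r+1,m}$, which is exactly what $Y_{r+1}(0)$ should denote.

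Since the substitution $u=0$ is elementary, there is essentially no analytic obstacle: all of the genuine work sits in Theorem~\ref{thm2}, which the corollary merely specializes. The one point that requires care is the notational convention just discussed, namely that $Y_{r+1}(0)$ is to be read as the normalized leading Taylor coefficient $[w^{r+1}]Y_{r+1}(w)/(r+1)!$ rather than as the vanishing value at $w=0$; I would make this reading explicit so that formula~\eqref{eq:formula-F} is unambiguous. Once that identification is in place, the two expressions for $F$ coincide and the corollary follows.
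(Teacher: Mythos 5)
Your proposal is correct and follows essentially the same route as the paper: the corollary is nothing but Theorem~\ref{thm2} specialized at $u=0$ (recall $F:=G|_{u=0}$), under which only the $k=0$ summand of the exponent survives. Your insistence that $Y_{r+1}(0)$ must be read as the normalized leading coefficient $[w^{r+1}]Y_{r+1}(w)/(r+1)!=\sum_{m}\binom{m+r+\frac12}{r+1}E_{m+r+1,m}$, rather than as the literal (vanishing) evaluation at $w=0$, is indeed the intended convention, consistent with the paper's subsequent remark describing this ``constant term'' of $Y_{r+1}$ as a linear combination of the operators $\E_{-(r+1),i}$ with central-factorial-number coefficients.
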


\begin{remark}
The constant term of $Y_{r+1}$ (used in the Equation~\eqref{eq:formula-F}) is a linear combination of $\E_{-(r+1),i}$, where $i=1,3,5,\dots,r+1$ ($i=0,2,4,\dots,r+1$) for even (respectively, odd) $r$, and the coefficients are \emph{central factorial numbers}~\cite{OEIS}.
\end{remark}

\subsection{Rearranging the generating series}

Consider the following version of a generating series for Hurwitz numbers,
\begin{equation}\label{eq:H}
H(\beta,p_1,p_2,\dots):=\sum_{g,n}  \frac{1}{n!}\sum_{\mu_1,\dots,\mu_n}|\mu|\cdot h_{g,|\mu|,\mu} p_{b_1}\cdots p_{b_n}\frac{\beta^m}{m!}.
\end{equation}
Here $m=(2g+n-1)/r$. In fact, it is the way we rather present a generating series for the integrals in the formula~\eqref{eq:GJV}. Since
\begin{equation}\label{eq:H-formula}
H+c(\beta)=\exp\left(\beta \cdot [w^{r+1}]\tilde\E_0(w)\right)\left(c(\beta)+ \sum_{i=1}^\infty p_i\right),
\end{equation} 
we conclude that $H+c(\beta)$ satisfies the Hirota equations, for an arbitrary function $c(\beta)$. Note that Equation~\eqref{eq:H-formula} is just the cut-and-join equation and that the operator $[w^{r+1}]\tilde\E_0(w)$ is actually explicitly given by the operator $Q_{r+1}$ discussed Section~\ref{sectionCutAndJoin} (see Notation~\ref{notationAbuse}).

Now, using that $(r+1)m=\dim X_{g,n}/r+n-1$, we obtain:
\begin{align}\label{eq:H-expansion}
H&=\sum_{n=1}^\infty \frac{1}{n!} \sum_{g,\mu_1,\dots,mu_n}\int_{X_{g,n}}\frac{1-\Lambda_2+\dots\pm\Lambda_{2g}}{(1-\mu_1\Psi_1)\cdots (1-\mu_n\Psi_n)}p_{\mu_1}\cdots p_{\mu_n}\beta^{m} \\
&= \frac{1}{u}\sum_{n=1}^\infty \frac{1}{n!} \sum_{g,\mu_1,\dots,b\mu_n}\int_{X_{g,n}}(1-u^2\Lambda_2+\dots\pm u^{2g}\Lambda_{2g})\prod_{i=1}^n
\frac{up_{\mu_i}}{(1-u\mu_i\Psi_i)} \notag \\
& = \frac{1}{u}\sum_{g,n}\frac{1}{n!}
\left\langle\left(1-u^2\Lambda_2+u^4\Lambda_4-\dots\right)\prod_{i=1}^n \left(\sum_{d\geq 0} \tau_d T_d\right)\right\rangle_g, \notag
\end{align}
where $u^{r+1}=\beta$ and
$T_d=\sum_{b\geq 1} up_b\cdot (ub)^d=u^{d+1}\sum_{b\geq 1} b^d p_b$.
Observe that $T_{d+1}=u\tilde\E_{0,1}T_d$.

\subsection{Change of variables}

We use the same change of variables as in~\cite{ShaZvo07, Sha09}. We rescale the variables by setting $p_b=q_b/u^b$, and then we replace $q_i$ with $\exp(-\E_{-1,1}/u)q_i$, $i=1,2,\dots$. An explicit formula for this linear triangular change of variables is given by 
\begin{equation}\label{eq:change}
p_b=\sum_{i=b}^\infty \frac{1}{u^i}(-1)^{i-b}\binom{i-1}{b-1}q_i.
\end{equation}
Under this change of variable a series $f(u,p_1,p_2,\dots)$ transforms into $g(u,q_1,q_2,\dots):=\exp(-\E_{-1,1}/u)f(u,q_1/u,q_2/u^2,\dots)$. This change of variable is a symmetry of the Hirota equations.

A straightforward computation shows that $T_0$ turns into $q_1$ and $u\tilde\E_{0,1}$ turns into $\exp(-\E_{-1,1}/u)u\tilde\E_{0,1}\exp(\E_{-1,1}/u)=u\tilde\E_{0,1}+\E_{-1,1}$. This means that $T_d=(u\tilde\E_{0,1}+\E_{-1,1})^dq_1$.

In order to apply the change of variables to the operator $\tilde\E_0(w)$, we need the following lemma.
\begin{lemma} We have: $[Y_i,\E_{-1,1}]=Y_{i+1}$, $i=0,1,2,\dots$.
\end{lemma}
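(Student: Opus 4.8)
The plan is to reduce the operator identity to the basic commutation relation for the $\tE$-operators and then to a pair of elementary scalar identities. Write $D=\d/\d w$ and $P_i=\prod_{k=0}^{i}(D-i/2+k)$, so that $Y_{i+1}(w)=\zeta(w)^{i+1}P_i\,\E_{-(i+1)}(w)$, while $Y_0(w)=\tE_0(w)$ corresponds to the empty product; recall that $\E_{-j}=\tE_{-j}$ for $j\geq 1$, so no deregularization term ever appears. Since $\E_{-1,1}=[z]\,\tE_{-1}(z)$, the first step is to compute the auxiliary commutator $[\E_{-i}(w),\E_{-1,1}]$. Applying the relation $[\tE_a(z),\tE_b(w)]=\zeta(aw-bz)\tE_{a+b}(z+w)$ with $a=-i$, $b=-1$ gives $[\tE_{-i}(w),\tE_{-1}(z)]=\zeta(w-iz)\tE_{-(i+1)}(w+z)$, and extracting the coefficient of $z^1$ yields
\begin{equation}
[\E_{-i}(w),\E_{-1,1}]=\zeta(w)\,\E_{-(i+1)}'(w)-i\,\zeta'(w)\,\E_{-(i+1)}(w).
\end{equation}

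Next I would observe that $\E_{-1,1}$ is a fixed operator on the wedge space, independent of $w$, hence it commutes with multiplication by $\zeta(w)^i$ and with the differential operator $P_{i-1}$; taking a commutator therefore commutes with these $w$-operations, giving
\begin{equation}
[Y_i(w),\E_{-1,1}]=\zeta(w)^i\,P_{i-1}\bigl(\zeta(w)\,\E_{-(i+1)}'(w)-i\,\zeta'(w)\,\E_{-(i+1)}(w)\bigr).
\end{equation}
Thus proving the lemma amounts to the operator identity $\zeta^i P_{i-1}(\zeta D-i\zeta')=\zeta^{i+1}P_i$ acting on $\E_{-(i+1)}(w)$.

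Since $\tE_{-(i+1)}(w)$ is a superposition of exponentials $e^{wc}$ times matrix units, and every operation above acts only on the $w$-dependence, it suffices to verify the scalar identity on a single exponential mode $e^{wc}$ (treating $c$ as a parameter). Using $\zeta=e^{w/2}-e^{-w/2}$ one finds $(\zeta D-i\zeta')e^{wc}=(c-i/2)e^{w(c+1/2)}-(c+i/2)e^{w(c-1/2)}$, while $P_j e^{wc'}=\pi_j(c')e^{wc'}$ with $\pi_j(c')=\prod_{k=0}^{j}(c'-j/2+k)$. Matching the coefficients of the independent exponentials $e^{w(c+1/2)}$ and $e^{w(c-1/2)}$ on the two sides reduces everything to the two Pochhammer-type identities $(c-i/2)\,\pi_{i-1}(c+1/2)=\pi_i(c)$ and $(c+i/2)\,\pi_{i-1}(c-1/2)=\pi_i(c)$, each of which follows by a one-line reindexing: the first restores the lowest factor $c-i/2$ of $\pi_i$, the second the highest factor $c+i/2$.

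The routine parts are the coefficient extraction in the first step and the reindexing in the last. The main bookkeeping obstacle is the third step: controlling the product $P_{i-1}$ of shifted derivatives (which commute, being polynomials in the single operator $D$) and confirming that the two half-shifted contributions coming from $\zeta=e^{w/2}-e^{-w/2}$ reassemble exactly into $\pi_i(c)$. Passing to the action on pure exponentials is precisely what makes this transparent and sidesteps a direct manipulation of the differential operators.
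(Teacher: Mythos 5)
Your proof is correct. Its first half is the same as the paper's: both compute, from the commutation relation $[\tE_a(z),\tE_b(w)]=\zeta(aw-bz)\tE_{a+b}(z+w)$, that $[\E_{-i}(w),\E_{-1,1}]=\bigl(\zeta(w)D-i\zeta'(w)\bigr)\E_{-(i+1)}(w)$ (where $D=\d/\d w$), and both then pull the $w$-operations $\zeta(w)^iP_{i-1}$ out of the commutator, reducing the lemma to the identity $P_{i-1}\bigl(\zeta D-i\zeta'\bigr)\E_{-(i+1)}(w)=\zeta P_i\,\E_{-(i+1)}(w)$, with $P_j=\prod_{k=0}^{j}(D-j/2+k)$. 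Where you genuinely differ is in how this identity is proved. The paper proves it as an identity of differential operators: it exhibits, for each $k=0,\dots,i-1$, an intertwining relation of the form $\bigl(D+\tfrac{i-1-2k}{2}\bigr)\bigl(\zeta D-(i-k)\zeta'-\tfrac{k}{2}\zeta\bigr)=\bigl(\zeta D-(i-1-k)\zeta'-\tfrac{k+1}{2}\zeta\bigr)\bigl(D+\tfrac{i-2k}{2}\bigr)$, and iterates it to push $\zeta D-i\zeta'$ through all the linear factors of $P_{i-1}$, which converts it into $\zeta P_i$. You instead evaluate both sides on the exponential modes $e^{wc}$ of which $\E_{-(i+1)}(w)$ is a superposition: there $P_j$ acts by the scalar $\pi_j(c)=\prod_{k=0}^{j}(c-j/2+k)$ and multiplication by $\zeta$ shifts $c$ by $\pm\tfrac12$, so everything collapses to the two factorization identities $(c-i/2)\,\pi_{i-1}(c+1/2)=\pi_i(c)=(c+i/2)\,\pi_{i-1}(c-1/2)$, which are immediate reindexings. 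The trade-off: the paper's route yields a universal operator identity (valid on any function of $w$) but requires guessing the correct one-parameter family of intermediate operators in the intertwining relation; your route needs no such ansatz, is more elementary to check, and suffices because the lemma only requires the identity on the modes actually present in $\E_{-(i+1)}(w)$ --- it also handles the base case $i=0$ uniformly (empty product, $\pi_{-1}\equiv 1$).
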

\begin{proof}
Indeed, 
\begin{align}\label{eq:comm-lemma}
[Y_i,\E_{-1,1}] & = \left[\zeta(w)^{i}\left(\prod_{k=0}^{i-1} \left(\frac{\d}{\d w} - \frac{i-1}{2}+k\right)\right)\E_{-(i)}(w),[z]\E_{-1}(z)\right].
\end{align}
Observe that $[z]\left[\E_{-(i)}(w),[z]\E_{-1}(z)\right]=[z]\left(\zeta(w-iz)\E_{-(i+1)}(z+w)\right)=\left(\zeta(w)\frac{\d}{\d w}-i\zeta'(w)\right)\E_{-(i+1)}(w)$. A straighforward computation implies that
\begin{align}
& \left(\frac{\d}{\d w} + \frac{i-1-2k}{2}\right)\left(\zeta(w)\frac{\d}{\d w}-(i-k)\zeta'(w)-\frac{k}{2}\zeta(w)\right) \\
& = \left(\zeta(w)\frac{\d}{\d w}-(i-1-k)\zeta'(w)-\frac{k+1}{2}\zeta(w)\right)\left(\frac{\d}{\d w} + \frac{i-2k}{2}\right),\notag
\end{align}
$k=0,1,\dots,i-1$. Therefore,
\begin{align}
& \left(\prod_{k=0}^{i-1} \left(\frac{\d}{\d w} - \frac{i-1}{2}+k\right)\right)\left(\zeta(w)\frac{\d}{\d w}-i\zeta'(w)\right) \\
& = \zeta(w)\prod_{k=0}^i \left(\frac{\d}{\d w} - \frac{i}{2}+k\right). \notag
\end{align}
Thus we see that the right hand side of Equation~\eqref{eq:comm-lemma} is equal to
\begin{equation}
\zeta(w)^{i}\cdot \left(\zeta(w)\prod_{k=0}^i \left(\frac{\d}{\d w} - \frac{i}{2}+k\right)\right) \E_{-(i+1)}(w) = Y_{i+1}.
\end{equation}
\end{proof}

\begin{corollary}\label{cor:E0} Under the change of variable the operator $\tilde\E_0(w)$ turns into $\sum_{i=0}^\infty u^{-i}Y_i/i!$
\end{corollary}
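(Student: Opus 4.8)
The plan is to realize the change of variables as a conjugation of operators and then to feed the commutator relation of the preceding lemma into the adjoint expansion of that conjugation. First I would record how the change of variables acts on operators rather than on series. Writing $R$ for the rescaling $p_b\mapsto q_b/u^b$ and recalling from the paragraph preceding the lemma that the subsequent substitution amounts to conjugation by $\exp(-\E_{-1,1}/u)$, an operator $A$ on the $p$-space is carried to $\exp(-\E_{-1,1}/u)\,R\,A\,R^{-1}\,\exp(\E_{-1,1}/u)$ on the $q$-space. The first thing to check is that $R$ acts trivially on $\tE_0(w)$: every coefficient $\mathcal{F}_{r+1}=[z^{r+1}]\tE_0(z)$ is the weight-preserving operator $Q_{r+1}$ of Section~\ref{sectionCutAndJoin}, i.e. a combination of normally ordered products $:a_{k_1}\cdots a_{k_n}:$ with $\sum_i k_i=0$, and $R$ multiplies such a product by $u^{\sum_i k_i}=1$. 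Hence $R\,\tE_0(w)\,R^{-1}=\tE_0(w)$, and on $\tE_0(w)$ the whole change of variables reduces to conjugation by $\exp(-\E_{-1,1}/u)$, exactly as in the displayed example for $u\tE_{0,1}$.

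Next I would expand this conjugation as an adjoint series. Setting $X=\E_{-1,1}/u$, the identity $e^{-X}Ae^{X}=\sum_{n\geq 0}\tfrac{(-1)^n}{n!}(\mathrm{ad}_X)^nA$ gives
\[
\exp(-\E_{-1,1}/u)\,\tE_0(w)\,\exp(\E_{-1,1}/u)=\sum_{n\geq 0}\frac{(-1)^n}{n!\,u^n}\,(\mathrm{ad}_{\E_{-1,1}})^n\,\tE_0(w).
\]
The point is then to evaluate the iterated commutators using the lemma. Since $Y_0=\tE_0$ and the lemma reads $[Y_i,\E_{-1,1}]=Y_{i+1}$, we have $\mathrm{ad}_{\E_{-1,1}}Y_i=[\E_{-1,1},Y_i]=-Y_{i+1}$, so that $(\mathrm{ad}_{\E_{-1,1}})^n\tE_0(w)=(-1)^nY_n$ by a one-line induction on $n$. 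Substituting this back, the two factors $(-1)^n$ cancel and the sum collapses to $\sum_{i\geq 0}u^{-i}Y_i/i!$, which is the assertion.

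The computation itself is short, so the only genuine points requiring care are bookkeeping ones. The first is verifying the weight-homogeneity claim that lets me drop $R$ entirely, namely that $R$ commutes with $\tE_0(w)$; this is where the structure of $\tE_0$ as a sum of charge-zero monomials is used. The second is tracking the two independent sign contributions — one from the adjoint expansion of $e^{-X}(\cdot)e^{X}$ and one from converting $[Y_i,\E_{-1,1}]$ as stated in the lemma into $\mathrm{ad}_{\E_{-1,1}}Y_i$ — so that they cancel rather than reinforce. I expect no analytic obstacle: everything is formal in $u$ and $w$, and the estimate $Y_{i+1}(w)=O(w^{i+1})$ noted after the definition of the $Y_i$ guarantees that the coefficient of each power of $w$ receives only finitely many contributions, so the series on the right-hand side is well defined.
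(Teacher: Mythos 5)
Your proof is correct and is precisely the argument the paper leaves implicit: the corollary is meant to follow from the lemma $[Y_i,\E_{-1,1}]=Y_{i+1}$ by writing the change of variables as conjugation by $\exp(-\E_{-1,1}/u)$ (the rescaling acting trivially on the weight-preserving operator $\tilde\E_0(w)$) and expanding the adjoint action, with the two $(-1)^n$ factors cancelling exactly as you track them. Your added care about the rescaling step and the $Y_i(w)=O(w^i)$ bound makes the deduction complete; nothing is missing.
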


\subsection{Proof of Theorems~\ref{thm1} and~\ref{thm2}}

The generating series given in Equation~\eqref{eq:H}, $H+c(u)$, is a solution to Hirota equations.
We apply the change of variables~\eqref{eq:change}. Using Corollary~\ref{cor:E0}, we see that this change of variables Equation~\eqref{eq:H-formula} turns into
\begin{equation}
c(u)+\exp\left(u^{r+1}[w^{r+1}]\sum_{i=0}^\infty u^{-i}Y_i/i!\right)\frac{q_1}{u}.
\end{equation}
Using that $Y_i(w)=O(w^{i})$, we see that this formula multiplied by $u$ is equal to the right hand side of the Equation~\eqref{eq:tau2} (if we choose $c(u)=0$). On the other hand, from Equation~\eqref{eq:H-expansion} we know that $H$ multiplied by $u$ is equal to $G$ in coordinates $u,q_1,q_2,\dots$. This completes the proof of Theorem~\ref{thm2}. Theorem~\ref{thm1} is then obvious since the change of variables~\eqref{eq:change} is a symmetry of the KP-hierarchy.




\end{document}